\newtheorem{proposition}{Proposition}
\newtheorem{theorem}{Theorem}
\newtheorem{lemma}{Lemma}
\newtheorem{assumption}{Assumption}
\newtheorem{remark}{Remark}
\newtheorem{condition}{Condition}
\begin{document}

\title{Consensus-based Distributed Discrete Optimal Transport for Decentralized Resource Matching}

\author{Rui~Zhang, Quanyan~Zhu

\thanks{R. Zhang and Q. Zhu are with the Department of Electrical and Computer Engineering, New York University, Brooklyn, NY, 11201
E-mail:\{rz885,qz494\}@nyu.edu. }}

\maketitle

\begin{abstract}
Optimal transport has been used extensively in resource matching to promote the efficiency of resources usages by matching sources to targets. However, it requires a significant amount of computations and storage spaces for large-scale problems. In this paper, we take a consensus-based approach to decentralize discrete optimal transport problems and develop fully distributed algorithms with alternating direction method of multipliers. We show that our algorithms guarantee certain levels of efficiency and privacy besides the distributed nature. We further derive primal and dual algorithms by exploring the primal and dual problems of discrete optimal transport with linear utility functions and prove the equivalence between them. We verify the convergence, online adaptability, and the equivalence between the primal algorithm and the dual algorithm with numerical experiments. Our algorithms reflect the bargaining between sources and targets on the amounts and prices of transferred resources and reveal an averaging principle which can be used to regulate resource markets and improve resource efficiency. 
\end{abstract}

\begin{IEEEkeywords}
Optimal Transport, Consensus-Based Decentralization, Resource Allocation, Distributed Resource Matching, Alternating Direction Method of Multipliers, Resource Markets.
\end{IEEEkeywords}

\section{Introduction}
Resource matching plays an important role in many applications, such as emergency response\cite{purohit2013emergency, fiedrich2000optimized, willis2007guiding}, smart grids\cite{wijaya2013matching,samadi2010optimal}, wireless networks\cite{georgiadis2006resource, steenstra2014dynamic, gu2015matching}, and data centers\cite{ferris2013methods, ding2014qos}. The objective of matching is to assign resources with targets in a way that promotes resource efficiency and increases social benefits. For example, proper assignments of workers with heterogeneous skills to jobs with heterogeneous characteristics can improve the total economic output\cite{galichon2015optimal}; efficient allocations of incoming customers to different parking slots can reduce the average time and costs of finding parking spaces and improve the overall parking capacity in smart cities\cite{geng2013new}.  

Optimal transport is one of the centralized planning approaches to resource matching\cite{villani2008optimal, galichon2015optimal}. A central planner finds the optimal scheme to transport or move resources from their sources to the targets that maximizes the social welfares. However, the computations required to solve such planning problem grow exponentially with the increase in the numbers of sources and targets in large-scale problems. Moreover, the communication overhead required for the planner to collect information and coordinate between sources and targets is also significant with a large number of participants (See Fig. \ref{fig:Example}(a)). In certain cases, targets and sources may not be willing to share any information to the central planner (See Fig. \ref{fig:Example}(b)). Therefore, it is computationally and structurally costly for the central planner to make a global planning for large-scale systems despite the recent efforts on developing methods to speed up the computations.

\begin{figure}[]
\centering
\subfigure[Power Grids]{\includegraphics[width=0.235\textwidth]{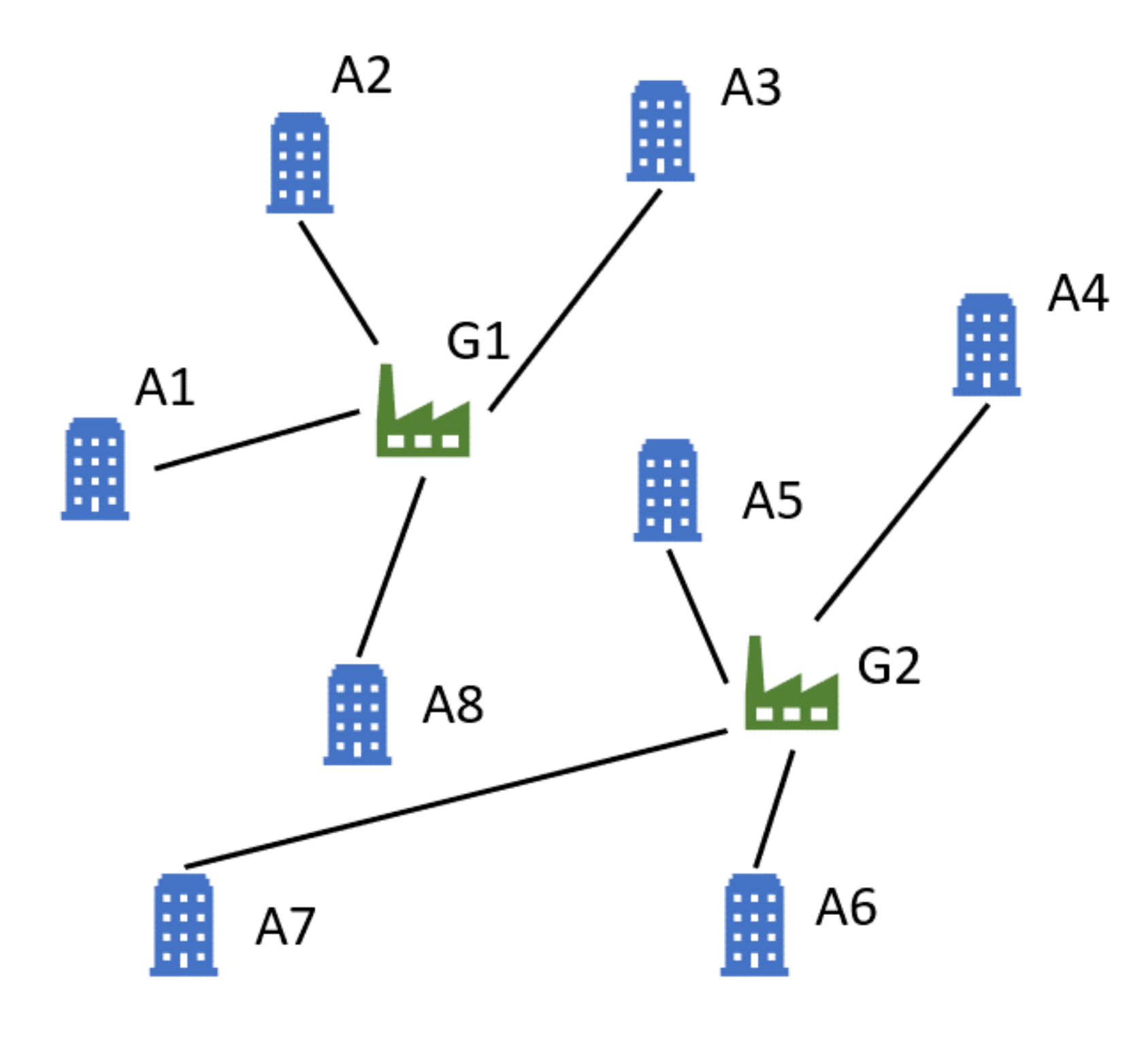}}
\subfigure[Medical Resources]{\includegraphics[width=0.235\textwidth]{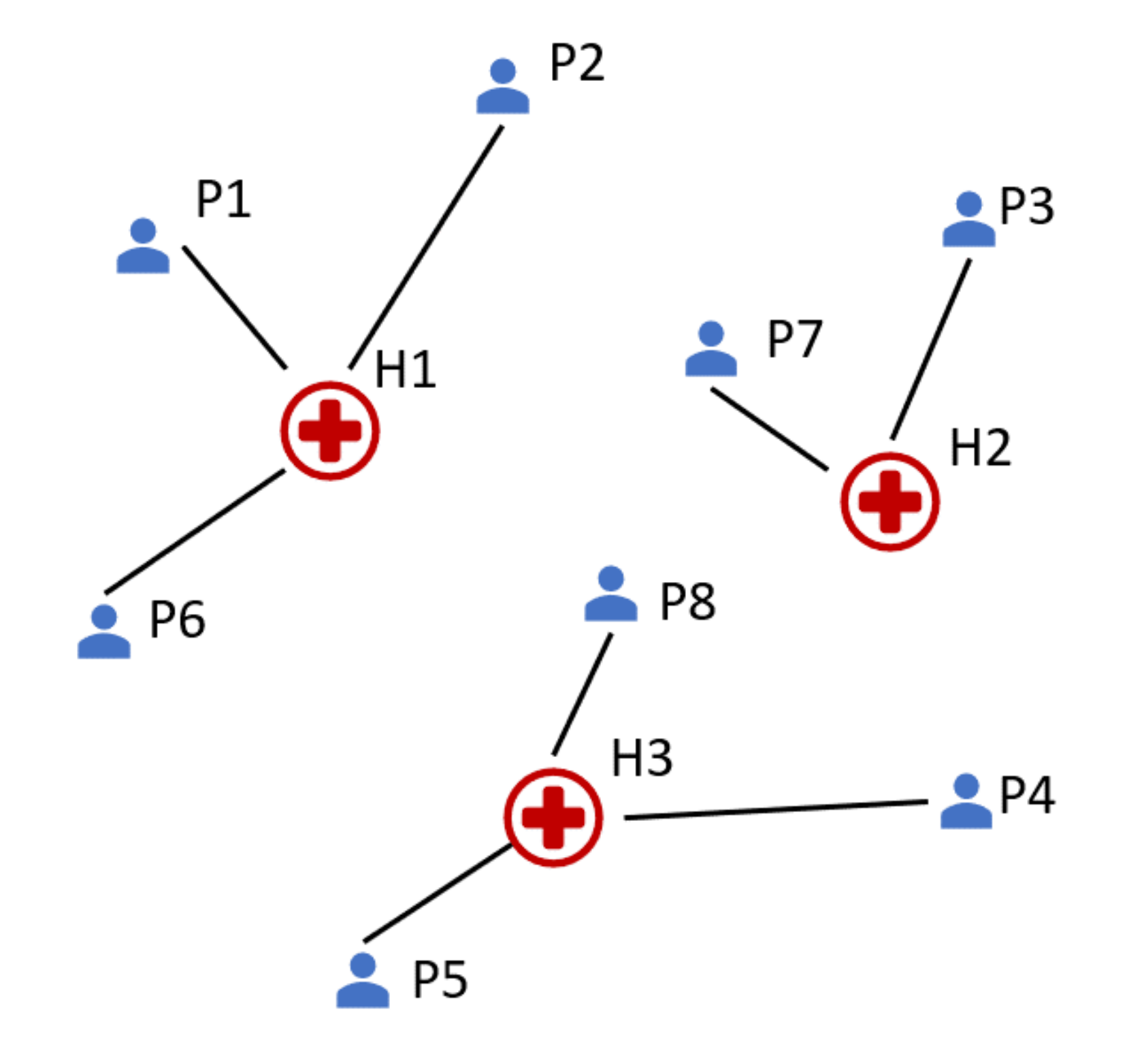}}
\caption{{Examples of decentralized resource matching. Fig. (a) shows a smart grid system whose objective is to increase energy efficiency by matching different customers to different power plants. The centralized matching is not efficient with the increase of the sizes of customers and available power plants as it is challenging to collect the information of them in a short period of time. Fig. (b) illustrates matching patients with hospitals. Each patient has his or her own preferences on hospitals, such as locations, affordability, and proficiency. It is impossible to collect all the information of patients and hospitals and then compute the optimal matching between them in a centralized way as patients may not be willing to share their preferences with others. } }
\label{fig:Example}
\end{figure}

Decentralized methods and algorithms offer practical solutions to reduce the complexity of the computations and make large-scale matching problems feasible \cite{gao2009distributed,hasan2015distributed, raman1998matchmaking}. In this paper, we aim to decentralize discrete optimal transport problems and derive a fully distributed algorithm which dispenses with a central planner to develop the matching mechanism between sources and targets. We consider a consensus-based approach in which consensus constraints are used to capture the agreements between the participants and the central planner on the matching. This approach allows the derivations of a fully distributed algorithm for discrete optimal transport with alternating direction method of multipliers (ADMM)\cite{boyd2011distributed}.

Our algorithm does not require central planners to compute the matching between sources and targets or store their information. Each participant computes its own problem and communicates with only the participants on the other side, which enables parallel computations and yields high efficiency compared to centralized algorithms. The consensus between participants and central planners is simplified into direct consensus between sources and targets. Each pair of source and target updates their matching until they reach an agreement. When all sources and targets reach consensus, the resulting matching achieves the optimal social utility and coincides with the centralized solution.

Besides the property of distributed computation and the improved computational efficiency, our algorithm offers additional useful features on distributed resource matching. Our algorithm guarantees a certain level of privacy as there are no central planners to collect information and each participant only shares information with the participants on the other side. Our algorithm can be easily implemented online and does not require rerunning the algorithm when participants leave or enter the matching platform. Instead, the algorithm can update the matching and adapt to changes in the preferences, resources, and targets.

We further explore the dual problem of discrete optimal transport with linear utilities which naturally provides a way to study pricing schemes of resource matching. We leverage the consensus-based approach to capture the bargaining and agreement process of the participants on the prices of the resources. We use ADMM to derive a dual algorithm which shares similar characteristics with the primal algorithm on fully distributed nature, efficiency, privacy, and online adaptability. We further prove the equivalence between the primal algorithm and the dual algorithm, and show that each sub-problem in the dual algorithm is the dual problem of each sub-problem in the primal algorithm.  

Our algorithms provide useful insights of resource matching in market environments. The optimal matching is achieved through the bargaining between sources and targets on the amounts and prices of transferred resources in the primal algorithm and the dual algorithm, respectively. Each participant proposes its offers with the objective of maximizing its utility while minimizing its differences from the participants on the other side. When all pairs of sources and targets reach consensus on their motions, the achieved matching maximizes the overall surplus. 

Furthermore, our algorithms reveal an averaging principle that sources and targets follow during their bargaining process. The core idea is that a source and a target should propose their current offers to be close to the average of their previous offers. The principle indicates a fair and unbiased negotiation between sources and targets. The convergence of our algorithms to the optimal matching shows that the sources and targets can maximize their total surplus by following the averaging principle. Thus, the  principle can be used to regulate sources and targets in resource markets and improve the resource efficiency. 

Our contributions can be summarized as follows: 
\begin{itemize} 
\item We propose a consensus-based approach to decentralize discrete optimal transport problems and further derive a fully distributed resource matching algorithm using ADMM. Each participant computes its own matching decisions and only communicates with participants on the other side. 
\item We demonstrate that our algorithm guarantees certain levels of efficiency and privacy and show that our algorithm is fully adaptive when changes occur in the preferences, resources, and targets.
\item We derive a fully distributed dual algorithm that provides a pricing scheme of resource matching in distributed markets and reveal an averaging principle that a source and a target always compromise to the average of either amounts or prices of transferred resources between them. 
\item We use numerical experiments to demonstrate the convergence and online capability of our algorithm, and we also corroborate the equivalence of primal algorithm and dual algorithm. 
\end{itemize}

\subsection{Related Works}
Our work is related to the literature on resource allocation, optimal transport, and distributed optimization. Resource allocation deals with the assignment or distribution of available resources in efficient ways \cite{jain1984quantitative, ross1999needs}. Recent applications include natural resource management such as lands and forests \cite{abrahamson1979patterns, bouis1990effects}, sensor networks \cite{gao2009distributed, mainland2005decentralized}, smart grids \cite{farbod2007resource, wijaya2013matching}, human resources \cite{lepak1999human}, and emergency response \cite{fiedrich2000optimized, willis2007guiding}.

Resource matching is one class of resource allocation with multiple sources and multiple targets, and it finds the optimal matching between multiple sources and multiple targets to transfer resources and maximize the total surplus of them\cite{purohit2013emergency, fiedrich2000optimized, willis2007guiding,wijaya2013matching,samadi2010optimal,georgiadis2006resource, steenstra2014dynamic, gu2015matching,ferris2013methods, ding2014qos}. Various approaches have been developed to study resource matching, which can be summarized into two distinct categories. Planning-based approaches focus on the designation of optimal resource matching to maximize social welfare \cite{galichon2015optimal}, while market-based approaches study interactions between sources and targets in markets through the analysis of supply and demand of resources\cite{clearwater1996market}. 

It is common for most planning-based frameworks to have a central planner identify resources, sources, and targets, and then design the matching\cite{galichon2015optimal}. However, this approach requires a significant amount of computation for large-scale problems. A lot of techniques such as approximation \cite{devanur2011near}, belief propagation\cite{huang2011fast}, barrier methods\cite{madan2010fast}, and network simplex\cite{bonneel2011displacement, peyre2017computational}, have been developed to speed up the computations. 

Although market-based approaches do not require central planners to design the matching between sources and targets, they have their own disadvantages\cite{clearwater1996market}. On one hand, it is difficult to study every source and target and further analyze their complex interactions for large-scale problems; on the other hand, it is impractical to derive the optimal resource matching as any matching pair is affected by all the other matching pairs through the interconnections between sources and targets.   

In this paper, we build on the discrete optimal transport framework which is a centralized planning-based approach to resource matching, and aim to derive a fully distributed algorithm to find the optimal matching between sources and targets. Optimal transport studies the optimal allocation of resources and provides economic interpretations and analysis of assignment problems and their properties \cite{geng2013new, galichon2015optimal}. Besides resource matching, it has also been applied to areas such as machine learning \cite{cuturi2013sinkhorn}, image processing\cite{haker2004optimal}, and reflector design\cite{glimm2010rigorous}.

A few distributed methods have been introduced to decentralize resource matching. For example, Gao et al. have developed a distributed algorithm to match resources to events in sensor networks by exploring the tree metric from the underlying network metric \cite{gao2009distributed}; Buyya has proposed distributed computational economy as an effective metaphor for the management of resources and application scheduling in grid computing \cite{buyya2002economic}; Hasan et al. have developed a distributed approach using stable matching to allocate radio resources for device-to-device communication \cite{hasan2015distributed}. 

We consider a consensus-based approach using alternating direction method of multipliers (ADMM) to decentralize discrete optimal transport problems\cite{boyd2011distributed, wei2012distributed}. ADMM provides a natural methodology to develop distributed algorithms for decentralized applications  \cite{forero2010consensus,chang2015multi, huang2016consensus}. For example, Zhang et al. have developed a consensus-based distributed algorithm with ADMM to study the interactions between attackers and machine learning learners in networked environments \cite{zhang2018game}; Zennaro et al. have proposed a consensus-based clock synchronization algorithm with ADMM \cite{zennaro2011fast}; Shen et al. have addressed the distributed robust multicell coordinated beamforming by taking an ADMM-based approach\cite{shen2012distributed}.

ADMM has been applied to solve transportation problems. For example, Papadakis, et al. have shown proximal splitting schemes for solving discretized dynamical optimal transport\cite{papadakis2014optimal}; Benamou, et al. have presented augmented Lagrangian methods to solve the time-dependent optimal transport problems\cite{benamou2015augmented}; Geissler, et al. have proposed an MIP-based alternating direction method to solve power-constrained gas transportation problems\cite{geissler2015solving}. They have all focused on centralized algorithms. In this work, we aim to address decentralized resource matching and leverage ADMM to achieve a fully distributed algorithm that boasts key features for decentralized systems.

\subsection{Organization of the Paper}
The rest of the paper is organized as follows. Section \ref{sec:PF} formulates centralized discrete optimal transport problems. Section \ref{sec:DOT} develops a distributed algorithm for resource matching and further extends it into an online version. Section \ref{sec:DD} explores the primal algorithm and the dual algorithm of discrete optimal transport problems with linear utility functions and proves the equivalence between them. Section \ref{sec:NE} provides numerical experiments and Section \ref{sec:CON} presents concluding remarks. Appendix A summarizes useful results of ADMM. Appendices B, C, D, and E provide the proofs for Proposition \ref{pro:MainReRe}, Theorem \ref{the:OTReDual}, Proposition \ref{pro:OTRe2}, and Proposition \ref{pro:OTReDualSolDual}, respectively. The following table provides a summary of notations in our paper.

\begin{table}[ht]
\begin{tabular}{cc}
\hline
\multicolumn{2}{c}{Summary of Notations}    
\\ \hline \multicolumn{1}{c|}{$x$, $y$} & Target $x$, Source $y$ 
\\ \multicolumn{1}{c|}{$\mathcal{X}$, $\mathcal{X}_y$} & All Targets, Targets Connected to Source $y$\\ \multicolumn{1}{c|}{$\mathcal{Y}$, $\mathcal{Y}_x$} & All Sources, Sources Connected to Target $x$
\\ \multicolumn{1}{c|}{$\mathcal{G}$} & Undirected Bipartite Graph Between $\mathcal{X}$ and $\mathcal{Y}$ 
\\ \multicolumn{1}{c|}{$\pi_{xy}$} & Amount of Resources Transferred from $y$ to $x$ 
\\ \multicolumn{1}{c|}{${\Pi_x}$, ${\Pi_y}$, ${\Pi_{\mathcal{G}}}$} & Amounts of Resources for $x$, $y$, $\mathcal{G}$
\\ \multicolumn{1}{c|}{${\Pi_x^{(t)}}$, ${\Pi_{\mathcal{G}}^{(t)}}$} &  Amounts of Resources Proposed by $x$, $\mathcal{X}$
\\ \multicolumn{1}{c|}{${\Pi_y^{(s)}}$, ${\Pi_{\mathcal{G}}^{(s)}}$} & Amounts of Resources Proposed by $y$, $\mathcal{Y}$
\\ \multicolumn{1}{c|}{$u_x$, $v_y$} & Surplus of $x$, Surplus of $y$
\\ \multicolumn{1}{c|}{${\mathbf{u}_\mathcal{X}}$, ${\mathbf{v}_\mathcal{Y}}$} & Surpluses of $\mathcal{X}$, Surpluses of $\mathcal{Y}$
\\ \multicolumn{1}{c|}{$w_{xy}$} & Price of Resources Transferred from $y$ to $x$ 
\\ \multicolumn{1}{c|}{${\mathbf{w}_x}$, ${\mathbf{w}_y}$, ${\mathbf{w}_\mathcal{G}}$} & Prices of Resources for $x$, $y$, $\mathcal{G}$
\\ \multicolumn{1}{c|}{${\mathbf{w}_x^{(t)}}$, ${\mathbf{w}_{\mathcal{G}}^{(t)}}$} & Prices of Resources Proposed by $x$, $\mathcal{X}$
\\ \multicolumn{1}{c|}{${\mathbf{w}_y^{(s)}}$, ${\mathbf{w}_{\mathcal{G}}^{(s)}}$} & Prices of Resources Proposed by $y$, $\mathcal{Y}$
\\ \hline
\end{tabular}
\end{table}

\section{Problem Formulation}
\label{sec:PF}
In this section, we present a discrete optimal transport framework for resource matching between targets and sources. Let $\mathcal{X}:=\{1, ..., N\}$ and $\mathcal{Y}:=\{1, ..., M\}$ denote the sets of targets and sources, respectively. We further use $\mathcal{X}_y\subseteq \mathcal{X}$ and $\mathcal{Y}_x\subseteq \mathcal{Y}$ to denote the subset of targets connected to source $y$ and the subset of sources connected to target $x$, respectively. The subsets $\mathcal{X}_y$ and $\mathcal{Y}_x$ capture the situation where each participant has only limited choices to match with the participants on the other side, which often exists in large-scale matching problems. 

We can represent the possible matching between targets and sources by an undirected bipartite graph $\mathcal{G}:= \left\lbrace    \{ x,y \} \left|  y\in\mathcal{Y}_x, x \in \mathcal{X}       \right. \right\rbrace = \left\lbrace    \{ x,y \} \left|  x\in\mathcal{X}_y,  y \in \mathcal{Y}       \right. \right\rbrace$. ``Undirected" indicates that if a target $x$ knows a source $y$, i.e., $y\in\mathcal{Y}_x$, then the source $y$ must know the target $x$, i.e, $x\in\mathcal{X}_y$. ``Bipartite" indicates that the only connections in the graph are between targets $\mathcal{X}$ and sources $\mathcal{Y}$ to transfer resources. 

Let $\pi_{xy}\in\mathbb{R}_{\geq 0}$ denote the amount of resources that source $y$ provides to target $x$, and ${\Pi_\mathcal{G}}:=\{\pi_{xy}|\{x, y\}\in\mathcal{G}\}$ denote the set of the amounts of resources from sources $\mathcal{Y}$ to targets $\mathcal{X}$ through the graph $\mathcal{G}$. We further use ${\Pi_{\overline{x}}}:=\{\pi_{xy}|y\in\mathcal{Y}_x, x=\overline{x}\}$ and ${\Pi_{\overline{y}}}:=\{\pi_{xy}|x\in\mathcal{X}_y, y = \overline{y}\}$ to represent the sets of the amounts of resources to target $\overline{x}$ and from source $\overline{y}$, respectively. It is easy to achieve that $\Pi_{\overline{x}} \subseteq \Pi_\mathcal{G}$ and  $\Pi_{\overline{y}} \subseteq \Pi_\mathcal{G}$.

The objective of discrete optimal transport is to find the optimal matching $\Pi_\mathcal{G}^*$ that maximizes the total surplus of targets and sources, which can be achieved by solving the following optimization problem:  
\begin{equation}
\label{eq:Main}
\begin{array}{c}
\max\limits_{{\Pi_\mathcal{G}}}   \sum\limits_{x\in\mathcal{X}}  \sum\limits_{y\in \mathcal{Y}_x}  f_{xy}(\pi_{xy})+ \sum\limits_{y\in\mathcal{Y}}  \sum\limits_{x\in \mathcal{X}_y}g_{xy} (\pi_{xy}) \\
\begin{array}{cc}
{\begin{array}{c}
	\text{s.t.} \\ \ \\ \ \\ \
	\end{array}}&{\begin{array}{cc}
	{p_{x, l} \leq \sum\limits_{y\in \mathcal{Y}_x}  \pi_{xy}  \leq p_{x, h},}&{\forall x\in\mathcal{X};} \\
	{q_{y, l} \leq \sum\limits_{x\in \mathcal{X}_y}  \pi_{xy} \leq q_{y, h},}&{\forall y\in\mathcal{Y};} \\
	{\pi_{xy}\geq 0,}&{\forall \{x, y\}\in\mathcal{G}.} 
	\end{array}}
\end{array}
\end{array}
\end{equation} 
The objective function captures the total surplus given the matching rules ${\Pi_\mathcal{G}}$. Functions {$f_{xy}:\mathbb{R}_{\geq 0} \rightarrow \mathbb{R}$ and $g_{xy}:\mathbb{R}_{\geq 0} \rightarrow \mathbb{R}$} represent the utility functions for $x$ and $y$, respectively. The constraints capture each participant's boundaries on the amount of transferred resources.  $p_{x, l}$ and $p_{x,h}$ represent the lower and upper bounds for $x$, respectively, and they satisfy $0 \leq p_{x,l} \leq p_{x, h}$; $q_{y, l}$ and $q_{y, h}$ represent the lower and upper bounds for $y$, respectively, and they satisfy $0 \leq q_{y, l} \leq q_{y, h}$. 

However, there may exist no ${\Pi_\mathcal{G}}$ that satisfies the constraints, and in that case, we cannot find the optimal matching by solving problem (\ref{eq:Main}). For example, when all sources $\mathcal{Y}_x$ together cannot provide sufficient resources to meet the minimum requirement of target $x$, i.e., $p_{x, l} > \sum_{y\in\mathcal{Y}_x}q_{y, h}$, no ${\Pi_{x}}$ is feasible for $x$. In this paper, we focus on situations with feasible ${\Pi_\mathcal{G}}$. 
\begin{condition}
\label{con:MainNecessary}
(Necessity) The following inequalities must hold to guarantee the feasibility of ${\Pi_\mathcal{G}}$:
\[(a). \ \ p_{x, l} \leq \sum_{y\in\mathcal{Y}_x}q_{y, h}, \ \ \ \forall x \in \mathcal{X};  \] 
\[(b). \ \ \sum_{x\in\mathcal{X}} p_{x, l} \leq \sum_{y\in\mathcal{Y}}  q_{y, h}.  \] 
\end{condition}
Condition \ref{con:MainNecessary}(a) indicates that any target can acquire the minimum amount of resources. Condition \ref{con:MainNecessary}(b) indicates that the demands are lower than or equal to the supplies. 
\begin{condition}
\label{con:MainSufficient}
(Sufficiency) The feasibility of ${\Pi_\mathcal{G}}$ is guaranteed if the following inequalities hold:
\[ q_{y, h} \geq \sum_{x\in\mathcal{X}_y}p_{x, l}, \ \ \ \forall y \in \mathcal{Y};  \] 
\end{condition}
Condition \ref{con:MainSufficient} indicates that any supplier can provide sufficient resources to satisfy the requirements of all the targets it knows. Condition \ref{con:MainSufficient} is stronger than Condition \ref{con:MainNecessary}. Note that problems that do not meet Condition \ref{con:MainSufficient} but satisfy Condition \ref{con:MainNecessary} can be still feasible. 

We have the following assumption regarding the convexity of the utility functions $f_{xy}$ and $g_{xy}$.
\begin{assumption}
\label{ass:fg}
Both the utility functions $f_{xy}$ and $g_{xy}$ are concave on $\pi_{xy}$, $\forall \{x, y\} \in\mathcal{G}$.
\end{assumption}
We use the following remarks to provide examples and discuss the interpretations behind $f_{xy}$ and $g_{xy}$.
\begin{remark}
\label{rem:fxy}	
The target $x$'s utility function $f_{xy}: \mathbb{R}_{\geq 0}\rightarrow\mathbb{R}$ can be expressed as follows:
\begin{equation}
\label{eq:remfxy}
f_{xy}(\pi_{xy}) = U_{xy}^{(t)}(\pi_{xy}) - C_{xy}^{(t)}(\pi_{xy}), 
\end{equation}
where $U_{xy}^{(t)}: \mathbb{R}_{\geq 0}\rightarrow\mathbb{R}_{\geq 0}$ is a concave and increasing function which indicates the revenues from consuming resources, and $C_{xy}^{(t)}: \mathbb{R}_{\geq 0}\rightarrow\mathbb{R}_{\geq 0}$ is a convex and increasing function which captures the costs of acquiring resources from $y$. The superscript $(t)$ indicates that the function belongs to the party of targets, and it does not indicate exponentiation. We provide several examples of $U_{xy}^{(t)}$ and $C_{xy}^{(t)}$ and their interpretations for reference.  
\begin{itemize}
\item $U_{xy}^{(t)}:=\gamma_{xy}\pi_{xy}$ is a linear revenue function with $\gamma_{xy}$ denoting the unit revenue gained from consuming resources;
\item $U_{xy}^{(t)}:=\gamma_{xy}\min(\pi_{xy}, p_x)$ is a threshold revenue function. It captures the situation when $x$ has a threshold of consuming resources and $x$ cannot gain more revenues with more resources after the threshold.
\item $U_{xy}^{(t)}:=\gamma_{xy}\log(\pi_{xy}+1)$ is a concave $\log$ revenue function which captures situations when the revenues increase slower with more resources;
\item $C_{xy}^{(t)}:=c_{xy}^{(t)}\pi_{xy}$ indicates a linear cost function with $c_{xy}^{(t)}$ denoting the unit cost of acquiring resources from $y$.
\item $C_{xy}^{(t)}:=c_{xy}^{(t)}\pi_{xy}^2$ indicates a quadratic cost function which captures situations when the costs increase faster with more resources. 
\end{itemize}
\end{remark}
\begin{remark}
\label{rem:gxy}
The source $y$'s utility function $g_{xy}: \mathbb{R}_{\geq 0}\rightarrow\mathbb{R}$ can be expressed as follows:
\begin{equation}
\label{eq:remgxy}
g_{xy} = U_{xy}^{(s)}(\pi_{xy}) - C_{xy}^{(s)}(\pi_{xy}), 
\end{equation}
where $U_{xy}^{(s)}: \mathbb{R}_{\geq 0}\rightarrow\mathbb{R}_{\geq 0}$ is a concave and increasing function which indicates the incomes from providing resources to targets, and $C_{xy}^{(s)}: \mathbb{R}_{\geq 0}\rightarrow\mathbb{R}_{\geq 0}$ is a convex and increasing function which captures the losses caused by resource reduction. The superscript $(s)$ indicates that the function belongs to the party of sources, and it does not indicate exponentiation. We provide several examples of $U_{xy}^{(s)}$ and $C_{xy}^{(s)}$ and their interpretations for reference.  
\begin{itemize}
\item $U_{xy}^{(s)}:=\delta_{xy}\pi_{xy}$ indicates a linear income function with $\delta_{xy}$ denoting the unit income of providing resources to $x$;
\item $C_{xy}^{(s)}:=c_{xy}^{(s)}\pi_{xy}$ indicates a linear loss function and $c_{xy}^{(s)}$ can be interpreted as the unit loss of resource reduction.
\item $C_{xy}^{(s)}:=c_{xy}^{(s)}\pi_{xy}^2$ indicates a quadratic loss function which captures situations when the losses increase faster with more resources. 
\end{itemize}
\end{remark}
We have the following theorem regarding the existence of solution to problem  (\ref{eq:Main}). 
\begin{theorem}
\label{the:MainExist}
There exists a solution (not necessarily unique) for problem (\ref{eq:Main}) if it satisfies Condition \ref{con:MainNecessary} and has a feasible set. 
\end{theorem}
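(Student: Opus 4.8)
The plan is to reduce the claim to the Weierstrass extreme value theorem: a continuous (in fact, merely upper semicontinuous) function attains its supremum on a nonempty compact set. Write $\mathcal{F}:=\{\Pi_\mathcal{G}\in\mathbb{R}^{|\mathcal{G}|}_{\ge 0}:\ p_{x,l}\le\sum_{y\in\mathcal{Y}_x}\pi_{xy}\le p_{x,h}\ \forall x,\ q_{y,l}\le\sum_{x\in\mathcal{X}_y}\pi_{xy}\le q_{y,h}\ \forall y\}$ for the feasible set of problem (\ref{eq:Main}); the ``has a feasible set'' hypothesis is precisely what guarantees $\mathcal{F}\neq\varnothing$, and Condition \ref{con:MainNecessary} is in any case necessary for this. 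It then remains to show (i) that $\mathcal{F}$ is compact and (ii) that the objective $F(\Pi_\mathcal{G}):=\sum_{x}\sum_{y\in\mathcal{Y}_x}f_{xy}(\pi_{xy})+\sum_{y}\sum_{x\in\mathcal{X}_y}g_{xy}(\pi_{xy})$ is continuous on $\mathcal{F}$.

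For compactness, I would first observe that $\mathcal{F}$ is the intersection of finitely many sets of the form $\{\Pi_\mathcal{G}:\ell\le \langle a,\Pi_\mathcal{G}\rangle\le h\}$ with $a$ a $0$--$1$ vector, each closed as the preimage of a closed interval under a continuous linear map; hence $\mathcal{F}$ is closed. Boundedness follows from the upper constraints alone: for every edge $\{x,y\}\in\mathcal{G}$ and every $\Pi_\mathcal{G}\in\mathcal{F}$ one has $0\le\pi_{xy}\le\sum_{y'\in\mathcal{Y}_x}\pi_{xy'}\le p_{x,h}<\infty$, so $\mathcal{F}\subseteq\prod_{\{x,y\}\in\mathcal{G}}[0,p_{x,h}]$. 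A closed and bounded subset of the finite-dimensional space $\mathbb{R}^{|\mathcal{G}|}$ is compact by Heine--Borel, and it is nonempty by assumption.

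For continuity of the objective, I would invoke Assumption \ref{ass:fg}: each $f_{xy}$ and $g_{xy}$ is a real-valued concave function on $\mathbb{R}_{\ge 0}$, hence continuous on the open half-line $(0,\infty)$; together with (at least) upper semicontinuity at the endpoint $\pi_{xy}=0$, each summand is continuous there, and a finite sum of such functions composed with the coordinate projections $\Pi_\mathcal{G}\mapsto\pi_{xy}$ is continuous on $\mathcal{F}$. Applying the extreme value theorem to $F$ on the nonempty compact set $\mathcal{F}$ then yields a maximizer $\Pi_\mathcal{G}^*$, i.e.\ a solution of problem (\ref{eq:Main}). The ``not necessarily unique'' qualifier needs no extra argument: since the $f_{xy},g_{xy}$ are only assumed concave (not strictly), $F$ need not be strictly concave, and its set of maximizers can be a nondegenerate face of the polytope $\mathcal{F}$ rather than a single point --- as already happens for the linear utilities of Remarks \ref{rem:fxy}--\ref{rem:gxy}.

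The one step that genuinely requires care is the continuity of $F$ at the boundary $\pi_{xy}=0$. Concavity by itself delivers continuity only on the \emph{open} interval $(0,\infty)$: a finite concave function on $[0,\infty)$ may jump upward at the left endpoint, and then the supremum of $F$ would be approached but not attained. I would close this gap by reading Assumption \ref{ass:fg} together with the modeling in Remarks \ref{rem:fxy}--\ref{rem:gxy} as implying continuity of each $f_{xy},g_{xy}$ on all of $\mathbb{R}_{\ge 0}$ --- which is automatic for the linear, threshold, logarithmic, and quadratic forms listed there --- so that the upper semicontinuity of $F$ on $\mathcal{F}$ that the extreme-value argument actually requires is in force. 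Everything else is routine.
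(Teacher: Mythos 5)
Your proof is correct and is considerably more careful than the paper's own argument, which consists of a one-line appeal to the concavity of $f_{xy}$ and $g_{xy}$ together with a citation to Section 4.2 of a convex-optimization textbook, and never verifies that the feasible set is compact or that the objective attains its supremum. You supply exactly the missing ingredients: closedness of $\mathcal{F}$ as a finite intersection of preimages of closed intervals under continuous linear maps, boundedness via $0\le\pi_{xy}\le\sum_{y'\in\mathcal{Y}_x}\pi_{xy'}\le p_{x,h}$, Heine--Borel, and then the extreme value theorem. Your observation that concavity alone guarantees continuity only on the open half-line, and that a finite concave function on $[0,\infty)$ can fail to be upper semicontinuous at $0$ (so that the supremum could be approached along $\pi_{xy}\to 0^{+}$ without being attained), identifies a genuine subtlety that the paper's proof silently passes over; reading Assumption \ref{ass:fg} as requiring continuity of $f_{xy}$ and $g_{xy}$ at the origin, as you do and as every example in Remarks \ref{rem:fxy} and \ref{rem:gxy} satisfies, is the right repair and should arguably be stated as part of the hypothesis. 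What the paper's route buys is brevity and a pointer to standard facts; what yours buys is a self-contained existence proof plus an explicit record of the boundary regularity the theorem tacitly needs. Your treatment of non-uniqueness (lack of strict concavity, maximizers forming a face of the polytope) agrees with the paper's.
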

\begin{proof}
From Section 4.2 in \cite{boyd2004convex}, since both $f_{xy}$ and $g_{xy}$ are concave, there exists a solution for problem (\ref{eq:Main}) . However, since they may not be strictly concave, the solution may not be unique.
\end{proof}

Problem (\ref{eq:Main}) captures the central planner's objective of finding the optimal matching to transfer the resources between sources and targets. However, solving it requires a significant amount of computations when the number of participants is large. We aim to develop a fully distributed algorithm to find the optimal matching which does not require a central planner. We take a consensus-based approach and decentralize problem (\ref{eq:Main}) with alternating direction method of multipliers (ADMM).

\section{Distributed Discrete Optimal Transport Using ADMM}
\label{sec:DOT}
In the centralized discrete optimal transport problem (\ref{eq:Main}), the central planner designs ${\Pi_\mathcal{G}}$ for sources and targets, we could also interpret it as that sources and targets reach consensus with the central planner on ${\Pi_\mathcal{G}}$. Recall that $\pi_{xy}$ denotes the the amount of transferred resources from $y$ to $x$. Let $\pi_{xy}^{(t)}$ and $\pi_{xy}^{(s)}$ represent the amounts of resources that target $x$ requests from $y$ and source $y$ offers to $x$, respectively, the consensus between the participants and the central planner can be captured with $\pi_{xy}^{(t)} = \pi_{xy}$ and $\pi_{xy} = \pi_{xy}^{(s)}$. We further define ${\Pi_\mathcal{G}^{(t)}}:=\{\pi_{xy}^{(t)}|\{x, y\}\in\mathcal{G}\}$, ${\Pi_\mathcal{G}^{(s)}}:=\{\pi_{xy}^{(s)}|\{x, y\}\in\mathcal{G}\}$,  ${\Pi_{\overline{x}}^{(t)}}:=\{\pi_{xy}^{(t)}|y\in\mathcal{Y}_x, x = \overline{x}\}$, and ${\Pi_{\overline{y}}^{(s)}}:=\{\pi_{xy}^{(s)}|x\in\mathcal{X}_y, y = {\overline{y}}\}$. The superscripts $(t)$ and $(s)$ indicate that the variables or parameters belong to targets and sources, respectively, and they do not indicate exponentiation.

As a result, we can rewrite problem (\ref{eq:Main}) into the form of ADMM as the following problem:
\begin{equation}
\label{eq:MainReRe}
\begin{array}{c}
\min\limits_{\{ {\Pi_{\mathcal{G}}^{(t)}}\in\mathcal{U}_1,{\Pi_{\mathcal{G}}^{(s)}}\in\mathcal{U}_2,{\Pi_{\mathcal{G}}}\}} - \sum\limits_{x\in\mathcal{X}} \sum\limits_{y\in \mathcal{Y}_x} f_{xy}(\pi_{xy}^{(t)}) - \sum\limits_{y\in\mathcal{Y}  }  \sum\limits_{x\in \mathcal{X}_y}g_{xy}(\pi_{xy}^{(s)}) \\
\begin{array}{cc}
{\begin{array}{c}
\text{s.t.}
\end{array}}&{ \begin{array}{cc}
 {\pi_{xy}^{(t)} = \pi_{xy},}&{\forall \{x,y\}\in\mathcal{G},} \\  {\pi_{xy} = \pi_{xy}^{(s)},}&{\forall \{x,y\}\in\mathcal{G},}
\end{array}}
\end{array}
\end{array}
\end{equation}
where 
\[\mathcal{U}_1:=\left\lbrace {\Pi_{\mathcal{G}}^{(t)}}  | \pi_{xy}^{(t)} \geq 0,  p_{x, l} \leq \sum\limits_{y\in \mathcal{Y}_x}  \pi_{xy}^{(t)}  \leq p_{x, h},  \{x, y\} \in \mathcal{G} \right\rbrace,\] 
\[\mathcal{U}_2:=\left\lbrace {\Pi_{\mathcal{G}}^{(s)}} |\pi_{xy}^{(s)} \geq 0, q_{y, l} \leq \sum\limits_{x\in \mathcal{X}_y}  \pi_{xy}^{(s)} \leq q_{y, h},  \{x, y\} \in \mathcal{G} \right\rbrace.\] 
Problem (\ref{eq:MainReRe}) is a minimization problem with the objective function from problem (\ref{eq:Main}). Since both $f_{xy}$ and $g_{xy}$ are concave, the objective function of problem (\ref{eq:MainReRe}) is convex. The constraints in problem (\ref{eq:Main}) have been included in the feasible sets $\mathcal{U}_1$ and $\mathcal{U}_2$. The consensus constraints in problem (\ref{eq:MainReRe}) capture the consensus between the participants and the central planner. Furthermore, we can achieve $\pi_{xy}^{(t)} = \pi_{xy}^{(s)}$, which indicates the direct consensus between sources and targets. 

Let $\alpha_{xy1}$ and $\alpha_{xy2}$ denote the Lagrange multipliers corresponding to the constraints $\pi_{xy}^{(t)} = \pi_{xy}$ and $\pi_{xy} = \pi_{xy}^{(s)}$, respectively, and then the augmented Lagrangian related to problem (\ref{eq:MainReRe}) is
\begin{equation}
\label{eq:MainReReLagrange}
\begin{array}{l}
\mathcal{L}({\Pi_{\mathcal{G}}^{(t)}},{\Pi_{\mathcal{G}}^{(s)}},{\Pi_{\mathcal{G}}},\{\alpha_{xy1}\}_{\mathcal{G}}, \{\alpha_{xy2}\}_{\mathcal{G}}) \\ \ \ \ \
=- \sum\limits_{x\in\mathcal{X}}\sum\limits_{y\in \mathcal{Y}_x} f_{xy}( \pi_{xy}^{(t)} ) - \sum\limits_{y\in\mathcal{Y}}  \sum\limits_{x\in \mathcal{X}_y}  g_{xy}(\pi_{xy}^{(s)} ) \\ \ \ \ \ + \sum\limits_{x\in\mathcal{X}}\sum\limits_{y\in\mathcal{Y}_x} \alpha_{xy1}(\pi_{xy}^{(t)}-\pi_{xy}) 
+  \sum\limits_{y\in\mathcal{Y}}\sum\limits_{x\in\mathcal{X}_y} \alpha_{xy2}(\pi_{xy}-\pi_{xy}^{(s)}) \\ \ \ \ \ +\frac{\eta}{2}\sum\limits_{x\in\mathcal{X}}\sum\limits_{y\in\mathcal{Y}_x} (\pi_{xy}^{(t)}-\pi_{xy})^2  +\frac{\eta}{2}\sum\limits_{y\in\mathcal{Y}}\sum\limits_{x\in\mathcal{X}_y}  (\pi_{xy}-\pi_{xy}^{(s)})^2 .
\end{array}
\end{equation}
Note that $\mathcal{L}$ is strictly convex with the quadratic terms $\frac{\eta}{2}\sum_{x\in\mathcal{X}}\sum_{y\in\mathcal{Y}_x} (\pi_{xy}^{(t)}-\pi_{xy})^2 $ and $\frac{\eta}{2}\sum_{y\in\mathcal{Y}}\sum_{x\in\mathcal{X}_y} (\pi_{xy}-\pi_{xy}^{(s)})^2$, and thus guarantees the convergence to a unique optimum even when $f_{xy}$ and $g_{xy}$ are not strictly concave. The scalar $\eta$ controls the speed of the convergence. 

We can achieve the following proposition after applying the iterations of ADMM to problem (\ref{eq:MainReRe}).
\begin{proposition}
\label{pro:MainReRe}
{We can obtain the following iterations after applying the iterations of ADMM to Problem (\ref{eq:MainReRe})}:
\begin{equation}
\label{eq:MainReReSolUW}
\begin{array}{l}
{\Pi_{x}^{(t)}}(k+1)  \in \arg\min\limits_{{\Pi_{x}^{(t)}} \in \mathcal{U}_{x}} - \sum\limits_{y\in \mathcal{Y}_x} f_{xy}(\pi_{xy}^{(t)}) \\ \ \ \ \ \ \ \ \ \ \ \ \ + \sum\limits_{y\in\mathcal{Y}_x} \alpha_{xy1}(k)\pi_{xy}^{(t)}    + \frac{\eta}{2} \sum\limits_{y\in\mathcal{Y}_x}  (\pi_{xy}^{(t)} - \pi_{xy}(k))^2 ,
\end{array}
\end{equation}
\begin{equation}
\label{eq:MainReReSolVW}
\begin{array}{l}
{\Pi_{y}^{(s)}}(k+1) \in \arg\min\limits_{{\Pi_{y}^{(s)}} \in \mathcal{U}_{y}} - \sum\limits_{x\in \mathcal{X}_y} g_{xy}(\pi_{xy}^{(s)}) \\ \ \ \ \ \ \ \ \ \ \ \ \ - \sum\limits_{x\in\mathcal{X}_y}  \alpha_{xy2}(k)\pi_{xy}^{(s)}  + \frac{\eta}{2} \sum\limits_{x\in\mathcal{X}_y}  (\pi_{xy}(k)-\pi_{xy}^{(s)} )^2 ,
\end{array}
\end{equation}
\begin{equation}
\label{eq:MainReReSolBeta}
\begin{array}{l}
\pi_{xy}(k+1) = \arg\min\limits_{\pi_{xy}} -\alpha_{xy1}(k) \pi_{xy} + \alpha_{xy2}(k)\pi_{xy} \\  \ \ \ \ \    +\frac{\eta}{2}(\pi_{xy}^{(t)}(k+1)-\pi_{xy})^2 +\frac{\eta}{2}(\pi_{xy}-\pi_{xy}^{(s)}(k+1))^2 ,
\end{array}
\end{equation}
\begin{equation}
\label{eq:MainReReSolAlpha1}
\alpha_{xy1}(k+1) = \alpha_{xy1}(k) + \eta (\pi_{xy}^{(t)}(k+1) - \pi_{xy}(k+1)),
\end{equation}
\begin{equation}
\label{eq:MainReReSolAlpha2}
\alpha_{xy2}(k+1) = \alpha_{xy2}(k) + \eta (\pi_{xy}(k+1) -\pi_{xy}^{(s)}(k+1)),
\end{equation}
where $\mathcal{U}_x:=\{ {\Pi_x^{(t)}} | \pi_{xy}^{(t)} \geq 0, y\in\mathcal{Y}_x; p_{x, l} \leq \sum_{y\in \mathcal{Y}_x}  \pi_{xy}^{(t)}  \leq p_{x, h} \}$, $\mathcal{U}_y:=\{{\Pi_y^{(s)}} |\pi_{xy}^{(s)} \geq 0, x\in\mathcal{X}_y; q_{y, l} \leq \sum_{x\in \mathcal{X}_y}  \pi_{xy}^{(s)} \leq q_{y, h} \}$.
\end{proposition}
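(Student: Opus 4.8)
The plan is to instantiate the standard two-block ADMM recursion recalled in Appendix~A on Problem~(\ref{eq:MainReRe}) and then use the additive separability of the augmented Lagrangian~(\ref{eq:MainReReLagrange}) to split each block update into the per-target, per-source, and per-edge subproblems in the statement. First I would cast~(\ref{eq:MainReRe}) into genuine two-block form by grouping the local variables $(\Pi_{\mathcal{G}}^{(t)},\Pi_{\mathcal{G}}^{(s)})$ as the first block and the consensus variable $\Pi_{\mathcal{G}}$ as the second: the objective depends only on the first block, the feasible set is the product $\mathcal{U}_1\times\mathcal{U}_2$ with no coupling to $\Pi_{\mathcal{G}}$, the two families of equality constraints $\pi_{xy}^{(t)}=\pi_{xy}$ and $\pi_{xy}=\pi_{xy}^{(s)}$ are linear in all variables, and their multipliers $\alpha_{xy1},\alpha_{xy2}$ stack into the single dual vector of the generic scheme. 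Hence the ADMM template --- (i) minimize $\mathcal{L}$ over the first block, (ii) minimize $\mathcal{L}$ over the second block, (iii) take a dual ascent step of size $\eta$ on each equality residual --- applies verbatim, and it remains only to read off what (i)--(iii) become.

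For step (i), fix $\Pi_{\mathcal{G}}(k)$ and the multipliers and minimize $\mathcal{L}$ in~(\ref{eq:MainReReLagrange}) over $(\Pi_{\mathcal{G}}^{(t)},\Pi_{\mathcal{G}}^{(s)})\in\mathcal{U}_1\times\mathcal{U}_2$. No term of $\mathcal{L}$ couples a $(t)$-variable with an $(s)$-variable (they interact only through the held-fixed $\pi_{xy}$), so this splits into independent minimizations over $\Pi_{\mathcal{G}}^{(t)}$ and over $\Pi_{\mathcal{G}}^{(s)}$. The terms of $\mathcal{L}$ containing $\pi_{xy}^{(t)}$ are $-f_{xy}(\pi_{xy}^{(t)})+\alpha_{xy1}(k)\pi_{xy}^{(t)}+\frac{\eta}{2}(\pi_{xy}^{(t)}-\pi_{xy}(k))^2$, and since $\mathcal{U}_1=\prod_{x\in\mathcal{X}}\mathcal{U}_x$ couples only the variables $\{\pi_{xy}^{(t)}:y\in\mathcal{Y}_x\}$ sharing a target $x$, the minimization decouples over $x$ into exactly~(\ref{eq:MainReReSolUW}); the symmetric computation for the $(s)$-variables, using $\mathcal{U}_2=\prod_{y\in\mathcal{Y}}\mathcal{U}_y$ and the opposite sign of $\alpha_{xy2}$ in the coupling term, gives~(\ref{eq:MainReReSolVW}). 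Each such subproblem minimizes a convex function (Assumption~\ref{ass:fg}) over the nonempty compact set $\mathcal{U}_x$ (resp. $\mathcal{U}_y$), so a minimizer exists but need not be unique --- hence the ``$\in\arg\min$'' in the statement.

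For step (ii), fix $\Pi_{\mathcal{G}}^{(t)}(k+1)$, $\Pi_{\mathcal{G}}^{(s)}(k+1)$ and the multipliers and minimize $\mathcal{L}$ over the unconstrained $\Pi_{\mathcal{G}}$; collecting the terms in $\pi_{xy}$ gives $-\alpha_{xy1}(k)\pi_{xy}+\alpha_{xy2}(k)\pi_{xy}+\frac{\eta}{2}(\pi_{xy}^{(t)}(k+1)-\pi_{xy})^2+\frac{\eta}{2}(\pi_{xy}-\pi_{xy}^{(s)}(k+1))^2$, a sum over edges $\{x,y\}\in\mathcal{G}$ of strictly convex scalar quadratics, whose edgewise minimization is~(\ref{eq:MainReReSolBeta}) (and in fact has the closed form $\pi_{xy}(k+1)=\frac12(\pi_{xy}^{(t)}(k+1)+\pi_{xy}^{(s)}(k+1))+\frac{1}{2\eta}(\alpha_{xy1}(k)-\alpha_{xy2}(k))$). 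Step (iii) applied separately to the two equality constraints is precisely~(\ref{eq:MainReReSolAlpha1}) and~(\ref{eq:MainReReSolAlpha2}). The argument is essentially bookkeeping; the only points needing care are casting~(\ref{eq:MainReRe}) as a true two-block problem so that classical ADMM (rather than a multi-block variant) applies, and justifying the decoupling of each block minimization into per-target, per-source, and per-edge problems, which is exactly the additive separability of $\mathcal{L}$ together with the product structure of $\mathcal{U}_1$ and $\mathcal{U}_2$.
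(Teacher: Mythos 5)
Your proof is correct and takes essentially the same route as the paper's Appendix~B: cast problem~(\ref{eq:MainReRe}) as a two-block ADMM instance of~(\ref{eq:ADMoM}), apply the generic iterations~(\ref{eq:ADMoMSolX})--(\ref{eq:ADMoMSolAlpha}), and decompose each block update using the additive separability of $\mathcal{L}$ and the product structure of $\mathcal{U}_1$ and $\mathcal{U}_2$. Your explicit grouping of $(\Pi_{\mathcal{G}}^{(t)},\Pi_{\mathcal{G}}^{(s)})$ as the first block and $\Pi_{\mathcal{G}}$ as the second is the clean reading of the paper's (somewhat loosely stated) vectorization, and your added detail on the per-target/per-source/per-edge decoupling only fills in steps the paper leaves implicit.
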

\begin{proof}
See Appendix B.
\end{proof}
Iterations (\ref{eq:MainReReSolUW})-(\ref{eq:MainReReSolAlpha2}) can be simplified further as shown in the following Proposition \ref{pro:MainReRe2}.
\begin{proposition}
\label{pro:MainReRe2}
{Iterations (\ref{eq:MainReReSolUW})-(\ref{eq:MainReReSolAlpha2}) can be simplified further into the following iterations:}
\begin{equation}
\label{eq:MainReReSol2UW}
\begin{array}{l}
{\Pi_{x}^{(t)}} (k+1)  \in \arg\min\limits_{{\Pi_{x}^{(t)}}  \in \mathcal{U}_{x}} - \sum\limits_{y\in \mathcal{Y}_x} f_{xy}(\pi_{xy}^{(t)}) \\ \ \ \ \ \ \ \ \ \ \ + \sum\limits_{y\in\mathcal{Y}_x} \alpha_{xy}(k)\pi_{xy}^{(t)}  + \frac{\eta}{2} \sum\limits_{y\in\mathcal{Y}_x}  (\pi_{xy}^{(t)} - \pi_{xy}(k))^2 ,
\end{array}
\end{equation}
\begin{equation}
\label{eq:MainReReSol2VW}
\begin{array}{l}
{\Pi_{y}^{(s)}} (k+1)  \in \arg\min\limits_{{\Pi_{y}^{(s)}} \in \mathcal{U}_{y}} - \sum\limits_{x\in \mathcal{X}_y} g_{xy} (\pi_{xy}^{(s)})\\ \ \ \ \ \ \ \ \ \ \  - \sum\limits_{x\in\mathcal{X}_y}  \alpha_{xy}(k)\pi_{xy}^{(s)} + \frac{\eta}{2} \sum\limits_{x\in\mathcal{X}_y}  (\pi_{xy}(k)-\pi_{xy}^{(s)} )^2 ,
\end{array}
\end{equation}
\begin{equation}
\label{eq:MainReReSol2Beta}
\begin{array}{l}
\pi_{xy}(k+1)   =\frac{1}{2} (\pi_{xy}^{(t)}(k+1) + \pi_{xy}^{(s)}(k+1)) ,
\end{array}
\end{equation}
\begin{equation}
\label{eq:MainReReSol2Alpha}
\alpha_{xy}(k+1) = \alpha_{xy}(k) + \frac{\eta}{2} (\pi_{xy}^{(t)}(k+1) - \pi_{xy}^{(s)}(k+1)),
\end{equation}
where $\mathcal{U}_x:=\{ {\Pi_{x}^{(t)}} | \pi_{xy}^{(t)} \geq 0, y\in\mathcal{Y}_x; p_{x, l} \leq \sum_{y\in \mathcal{Y}_x}  \pi_{xy}^{(t)}  \leq p_{x, h}  \}$, $\mathcal{U}_y:=\{{\Pi_{y}^{(s)}}  |\pi_{xy}^{(s)} \geq 0, x\in\mathcal{X}_y; q_{y, l} \leq \sum_{x\in \mathcal{X}_y}  \pi_{xy}^{(s)} \leq q_{y, h}  \}$.
\end{proposition}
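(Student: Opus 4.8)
The plan is to eliminate the consensus variable $\pi_{xy}$ and to collapse the two multiplier sequences $\{\alpha_{xy1}(k)\}$ and $\{\alpha_{xy2}(k)\}$ into a single sequence $\{\alpha_{xy}(k)\}$, by exploiting the closed form of the $\pi_{xy}$-update in (\ref{eq:MainReReSolBeta}) together with a symmetry between the two consensus constraints. Throughout I would argue edge by edge, since (\ref{eq:MainReReSolBeta})--(\ref{eq:MainReReSolAlpha2}) decouple over $\{x,y\}\in\mathcal{G}$, and the $\Pi_x^{(t)}$- and $\Pi_y^{(s)}$-updates are affected only through the multipliers.

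First I would compute (\ref{eq:MainReReSolBeta}) explicitly. Its objective is an unconstrained strictly convex quadratic in the scalar $\pi_{xy}$, so setting the derivative to zero gives
\[
\pi_{xy}(k+1) = \frac{1}{2}\bigl(\pi_{xy}^{(t)}(k+1) + \pi_{xy}^{(s)}(k+1)\bigr) + \frac{1}{2\eta}\bigl(\alpha_{xy1}(k) - \alpha_{xy2}(k)\bigr).
\]
Hence (\ref{eq:MainReReSol2Beta}) holds precisely when the two multipliers agree, so the crux is to show $\alpha_{xy1}(k) = \alpha_{xy2}(k)$ for all $k$.

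Second I would track the difference $d_{xy}(k) := \alpha_{xy1}(k) - \alpha_{xy2}(k)$. Subtracting (\ref{eq:MainReReSolAlpha2}) from (\ref{eq:MainReReSolAlpha1}) and substituting the closed form above for $\pi_{xy}(k+1)$, the terms cancel exactly and one obtains $d_{xy}(k+1) = 0$ for every $k \geq 0$. Under the standard zero initialization $\alpha_{xy1}(0) = \alpha_{xy2}(0) = 0$ this gives $d_{xy}(k) = 0$ for all $k \geq 0$; write $\alpha_{xy}(k)$ for the common value. Then the last term in the closed form of $\pi_{xy}(k+1)$ vanishes, yielding (\ref{eq:MainReReSol2Beta}), and substituting (\ref{eq:MainReReSol2Beta}) into either (\ref{eq:MainReReSolAlpha1}) or (\ref{eq:MainReReSolAlpha2}) collapses both multiplier updates to (\ref{eq:MainReReSol2Alpha}).

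Finally, replacing $\alpha_{xy1}(k)$ in (\ref{eq:MainReReSolUW}) and $\alpha_{xy2}(k)$ in (\ref{eq:MainReReSolVW}) by $\alpha_{xy}(k)$ turns them verbatim into (\ref{eq:MainReReSol2UW}) and (\ref{eq:MainReReSol2VW}), the feasible sets $\mathcal{U}_x$ and $\mathcal{U}_y$ being unchanged; this completes the reduction. The argument is elementary, so there is no real obstacle; the only point requiring care is verifying that the closed form of $\pi_{xy}(k+1)$ makes the telescoping in the $d_{xy}$-recursion cancel identically (so that $d_{xy}(k)\equiv 0$ exactly, not merely asymptotically), and observing that the proposition concerns the ADMM iterates with the usual zero initialization, so no separate treatment of general initializations is needed.
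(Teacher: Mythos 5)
Your proposal is correct and takes essentially the same route as the paper's proof: solve the $\pi_{xy}$-update in closed form, deduce that $\alpha_{xy1}(k)=\alpha_{xy2}(k)$ after one iteration, and then collapse the two multipliers and the $\pi_{xy}$-update into (\ref{eq:MainReReSol2Beta})--(\ref{eq:MainReReSol2Alpha}). The only cosmetic difference is that the paper observes directly that both multiplier updates evaluate to the same expression, whereas you track the difference $d_{xy}(k)$ and show it cancels identically; the paper accordingly states the identity for $k>0$ without invoking zero initialization.
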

\begin{proof}
Note that (\ref{eq:MainReReSolBeta}) can be solved directly as:
\begin{equation}
\label{eq:MainProofBeta}
\begin{array}{l}
\pi_{xy}(k+1) = \frac{1}{2\eta}(\alpha_{xy1}(k) - \alpha_{xy2}(k)) \\ \ \ \ \ \ \ \ \ \  \ \ \ \ \ \ \ \ \ \ \ \ \  + \frac{1}{2}(\pi_{xy}^{(t)}(k+1)+\pi_{xy}^{(s)}(k+1)).
\end{array}
\end{equation}
By plugging it into (\ref{eq:MainReReSolAlpha1}) and (\ref{eq:MainReReSolAlpha2}), we have that $
\alpha_{xy1}(k+1) = \frac{1}{2}(\alpha_{xy1}(k)+\alpha_{xy2}(k))+\frac{\eta}{2}(\pi_{xy}^{(t)}(k+1)-\pi_{xy}^{(s)}(k+1))$ and $
\alpha_{xy2}(k+1) = \frac{1}{2}(\alpha_{xy1}(k)+\alpha_{xy2}(k))+\frac{\eta}{2}(\pi_{xy}^{(t)}(k+1)-\pi_{xy}^{(s)}(k+1))$. Thus,  $\alpha_{xy1}(k) = \alpha_{xy2}(k)$ for $k> 0$, and we can achieve (\ref{eq:MainReReSol2Beta}) from (\ref{eq:MainProofBeta}). After writing $\alpha_{xy1}(k)$ and $\alpha_{xy2}(k)$ as $\alpha_{xy}(k)$, we have (\ref{eq:MainReReSol2UW}), (\ref{eq:MainReReSol2VW}), (\ref{eq:MainReReSol2Alpha}).
\end{proof}
Algorithm 1 summarizes (\ref{eq:MainReReSol2UW})-(\ref{eq:MainReReSol2Alpha}). Iterations  (\ref{eq:MainReReSol2UW}) and (\ref{eq:MainReReSol2VW}) are updates of target $x$ and source $y$, respectively. Iterations (\ref{eq:MainReReSol2Beta}) and (\ref{eq:MainReReSol2Alpha}) are updates of pair $xy$. Iterations (\ref{eq:MainReReSol2UW})-(\ref{eq:MainReReSol2Alpha}) are fully distributed iterations to solve problem (\ref{eq:MainReRe}).  A detailed analysis of Proposition \ref{pro:MainReRe2} and Algorithm 1 is provided in the following subsections. 

\begin{table}[]
	\label{alg:MainSol}
	\renewcommand{\arraystretch}{2.0}
	\centering
	\begin{tabular}{l}
		\hline
		\bfseries Algorithm 1 \\
		\hline
		1:\ \ \bf{for} $k=0,1,2,...$ do\\
		2:\ \ \ \ \ \ \ \ Each $x\in\mathcal{X}$ computes ${\Pi_{x}^{(t)}}(k+1)$ via (\ref{eq:MainReReSol2UW})\\
		3:\ \ \ \ \ \ \ \ Each $y\in\mathcal{Y}$ computes ${\Pi_{y}^{(s)}}(k+1)$ via (\ref{eq:MainReReSol2VW})\\
		4:\ \ \ \ \ \ \ \ Each pair $\{x,y\}\in\mathcal{G}$  computes $\pi_{xy}(k+1)$ via (\ref{eq:MainReReSol2Beta})\\
		5:\ \ \ \ \ \ \ \ Each pair $\{x,y\}\in\mathcal{G}$  computes $\alpha_{xy}(k+1)$ via (\ref{eq:MainReReSol2Alpha})\\
		6:\ \ \bf{end for}\\
		\hline
	\end{tabular}
\end{table}

\begin{figure}[]
\centering
\subfigure{\includegraphics[width=0.223\textwidth]{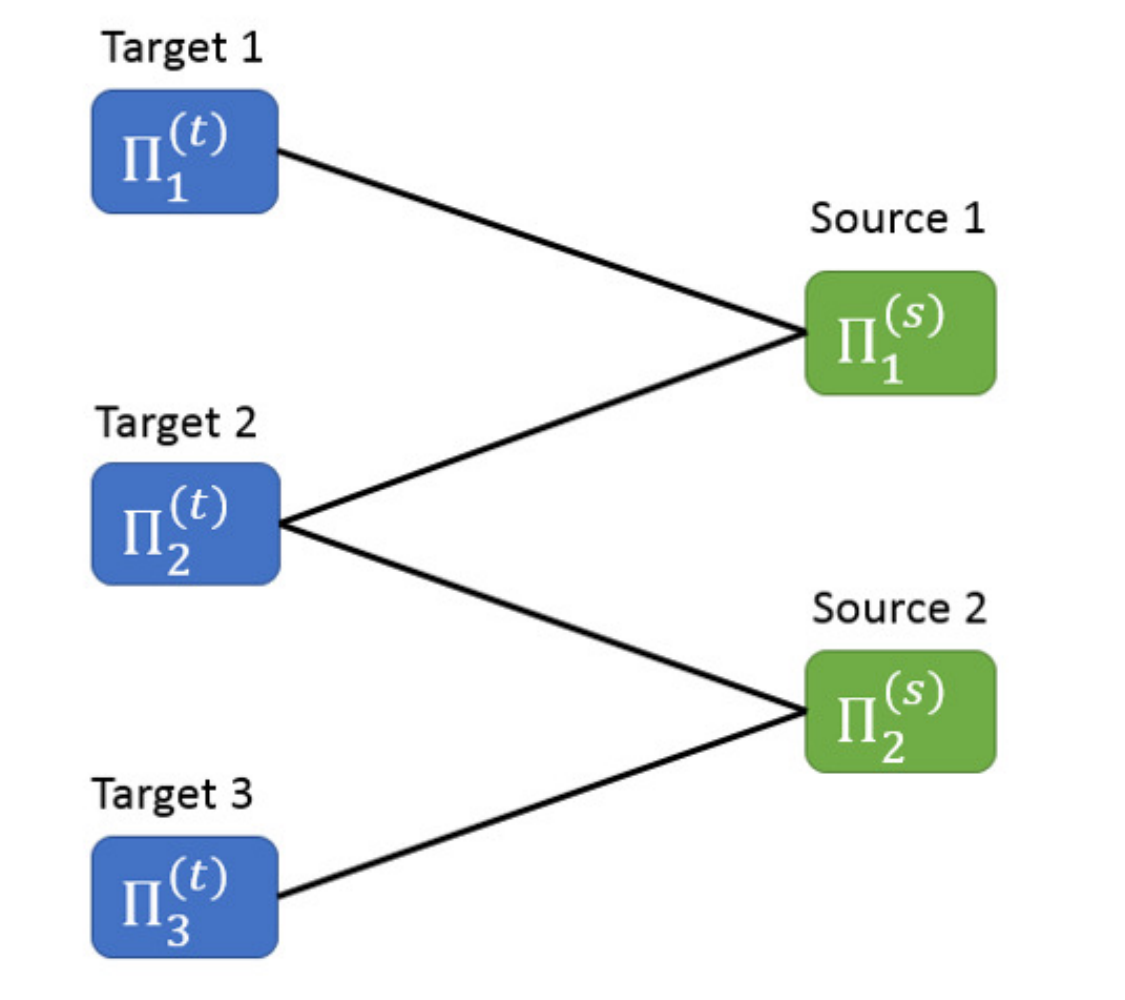}}
\subfigure{\includegraphics[width=0.223\textwidth]{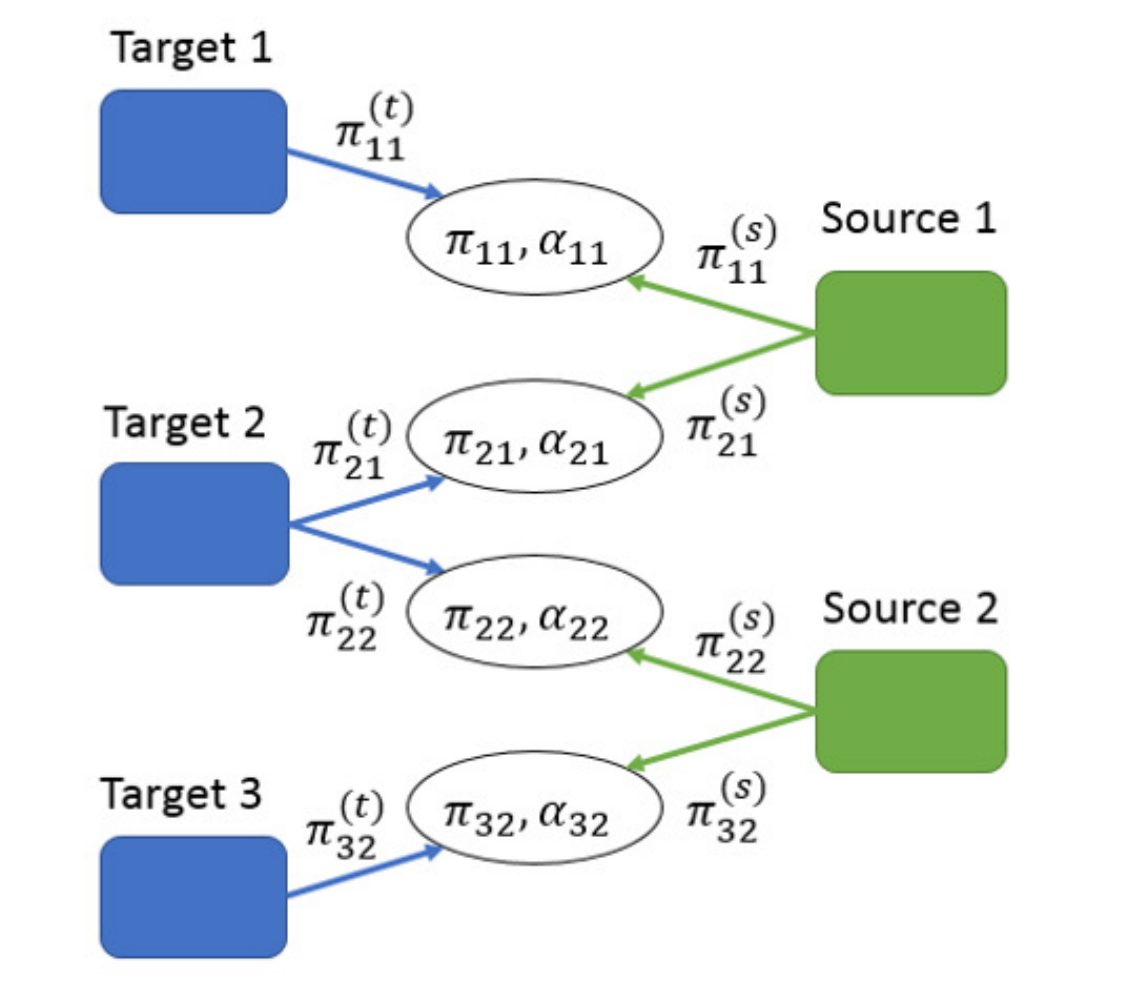}}
\caption{Illustration of Algorithm 1. At every iteration, each target $x$ or source $y$ achieves $\Pi_{x}^{(t)}$ or $\Pi_{y}^{(s)}$ with (\ref{eq:MainReReSol2UW}) or (\ref{eq:MainReReSol2VW}), respectively. Then, each pair of target $x$ and source $y$ computes $\pi_{xy}$ and $\alpha_{xy}$ with (\ref{eq:MainReReSol2Beta}) and (\ref{eq:MainReReSol2Alpha}), respectively. The iterations continue until convergence.}
\label{fig:Alg1}
\end{figure}

\subsection{Convergence and Complexity}
Since $f_{xy}$ and $g_{xy}$ are concave functions, we have that $-f_{xy}(\pi_{xy}^{(t)})-g_{xy}(\pi_{xy}^{(s)})$ is a convex function, and thus iterations (\ref{eq:MainReReSolUW})-(\ref{eq:MainReReSolAlpha2}) converge to the optimum of problem (\ref{eq:MainReRe}) from Lemma \ref{lem:ADMoMConvergence} in Appendix A. Since (\ref{eq:MainReReSol2UW})-(\ref{eq:MainReReSol2Alpha}) come from the simplifications of (\ref{eq:MainReReSolUW})-(\ref{eq:MainReReSolAlpha2}), we have that (\ref{eq:MainReReSol2UW})-(\ref{eq:MainReReSol2Alpha}) converge to the optimum of problem (\ref{eq:MainReRe}). Thus, the convergence of Algorithm 1 to the solution of problems (\ref{eq:Main}) and (\ref{eq:MainReRe}) is guaranteed.

The algorithm computes ${\Pi_{x}^{(t)}}$, ${\Pi_{y}^{(s)}}$, $\pi_{xy}$, and $\alpha_{xy}$ at each iteration. The complexities of computing ${\Pi_{x}^{(t)}}$ and ${\Pi_{y}^{(s)}}$ are determined by the utility functions $f_{xy}$ and $g_{xy}$ and the quadratic terms $\frac{\eta}{2} \sum_{y\in\mathcal{Y}_x}  (\pi_{xy}^{(t)} - \pi_{xy}(k))^2$ and $\frac{\eta}{2} \sum_{x\in\mathcal{X}_y}  (\pi_{xy}(k)-\pi_{xy}^{(s)} )^2 $, respectively. For example, if $g_{xy}$ is quadratic or linear, the complexity of computing ${\Pi_{y}^{(s)}}$ is equivalent to the complexity of quadratic programming which is $O(n^3)$, where $n$ is the size of ${\Pi_{y}^{(s)}}$. $\pi_{xy}$ and $\alpha_{xy}$ are obtained through matrix computations, which have the complexity of $O(1)$. The complexity of Algorithm 1 is further affected by the number of participants and connections between targets and sources.

\subsection{Efficiency and Privacy}
In Algorithm 1, each participant solves its own problem with (\ref{eq:MainReReSol2UW}) or (\ref{eq:MainReReSol2VW}), and each pair of target and source updates itself with (\ref{eq:MainReReSol2Beta}) and (\ref{eq:MainReReSol2Alpha}). Thus, it is a fully distributed algorithm which does not require central planners to collect the information of participants and compute the optimal matching. Algorithm 1 enables parallel computations and is more efficient for large-scale problems. Moreover, the only information that transmitted between participants are $\pi_{xy}^{(t)}$, $\pi_{xy}^{(s)}$, $\pi_{xy}$, and $\alpha_{xy}$, which reduces communication overhead and keeps privacy at the same time. Each participant's personal information $\{p_{x,l},p_{x,h}, f_{xy}, \mathcal{Y}_x\}$ or $\{q_{y,l},q_{y,h}, g_{xy}, \mathcal{X}_y\}$ is also kept private to itself. As a result, our algorithm guarantees certain levels of efficiency and privacy compared to centralized algorithms. 

\subsection{Resource Market: Negotiations and Bargaining}
In problem (\ref{eq:MainReRe}), the consensus constraints capture the consensus between participants and central planners. Furthermore, they can also be viewed as the direct consensus between sources and targets through central planners. One interpretation of the consensus constraints is that sources and targets must agree with each other to form a matching in resource markets. Algorithm 1 further reflects the negotiations and bargaining between sources and targets to reach agreements on the amount of transferred resources.  

$\pi_{xy}^{(t)}$ and $\pi_{xy}^{(s)}$ denote the amounts of resources that target $x$ requests from source $y$ and source $y$ offers to target $x$, respectively. They are proposed by $x$ and $y$ to meet their own objectives (\ref{eq:MainReReSol2UW}) and (\ref{eq:MainReReSol2VW}), respectively. $\pi_{xy}$ captures the consensus between $x$ and $y$ that they must reach an agreement on the amount of transferred resources, i.e., $\pi_{xy}^{(t)} = \pi_{xy} = \pi_{xy}^{(s)}$ from problem (\ref{eq:MainReRe}). We can see from (\ref{eq:MainReReSol2Beta}) that the agreement is approached by continuous negotiations following an averaging principle, i.e., $\pi_{xy}$ is taken as the average of $\pi_{xy}^{(t)}$ and $\pi_{xy}^{(s)}$. Moreover, we can see from (\ref{eq:MainReReSol2UW}) and (\ref{eq:MainReReSol2VW}) that $x$ and $y$ modify their proposals $\pi_{xy}^{(t)}$ and $\pi_{xy}^{(s)}$ to not only maximize their own utilities but also minimize the differences from their previous negotiations $\pi_{xy}$, respectively.  

$\alpha_{xy}$ is the Lagrange multipliers on the consensus constraints. We can see from (\ref{eq:MainReReSol2Alpha}) that $\alpha_{xy}$ captures cumulative differences between $x$ and $y$. A positive $\alpha_{xy}$ indicates that $x$ requests more than $y$ offers, thus, $x$ will decrease $\pi_{xy}^{(t)}$ with term $\alpha_{xy}(k)\pi_{xy}^{(t)}$ in (\ref{eq:MainReReSol2UW}), while $y$ will increase $\pi_{xy}^{(s)}$ with term $-\alpha_{xy}(k)\pi_{xy}^{(s)}$ in (\ref{eq:MainReReSol2VW}), and vice versa. $\alpha_{xy}$ identifies the directions for $x$ and $y$ to modify their proposals $\pi_{xy}^{(t)}$ and $\pi_{xy}^{(s)}$. It reveals the tendencies of sources and targets to reach agreements on the amounts of transferred resources between them. 

$\eta$ controls each participant's trade-off between its high utility and a small total difference from others. We can see from the last terms of the objective functions in (\ref{eq:MainReReSol2UW}) and (\ref{eq:MainReReSol2VW}) that a larger $\eta$ indicates that the participant cares more about reaching agreements with the participants on the other side, while a smaller $\eta$ indicates that the participant pays more attentions on maximizing its utility. Moreover, we can see from (\ref{eq:MainReReSol2Alpha}) that a large $\eta$ amplifies the degree of disagreement $\alpha_{xy}$, which will increase the costs of disagreement for the participant through the second terms of the objective functions in (\ref{eq:MainReReSol2UW}) and (\ref{eq:MainReReSol2VW}). 

As a result, our algorithm reflects the negotiation and bargaining processes between targets and sources in market environments. Each participant aims to maximize its own utility and minimize the differences from others. The matching is reached by the bargaining between sources and targets on the amounts of transferred resources following an averaging principle. The averaging principle contains three aspects: 1) each pair of source and target always settles at their average amount of transferred resources after each negotiation; 2) the source and the target then modify their proposals by reducing their differences from the average amount of resources from their last round of negotiation; 3) the negotiations repeat until agreements are reached for all sources and targets. The averaging principle provides a way to regulate resource markets and improve resource efficiency as the convergence of our algorithm to the optimal matching is guaranteed. 

\subsection{Online Distributed Resource Matching}
In many applications, targets and sources may change over time. For example, a source may increase its upper bound $q_{y, h}$ when it has more resources to distribute; a target may achieve more utilities with the same amount of resources after improving the efficiency of resource consumption; new participants may join the matching and old participants may quit the matching. Our algorithm can be extended to its online form to address the real-time requirements for these applications.  
\begin{remark}
\label{rem:MainReReOnline}
Let $\mathcal{X}^{(k)}$, $\mathcal{Y}^{(k)}$, $f_{xy}^{(k)}$, $g_{xy}^{(k)}$, $\mathcal{U}_x^{(k)}$, and $\mathcal{U}_y^{(k)}$ denote the discrete optimal transport problem at time $k$, the iterations of finding the optimal matching are:
\begin{equation}
\label{eq:MainReReSol2UWOnline}
\begin{array}{l}
{\Pi_{x}^{(t)}}(k+1)  \in \arg\min\limits_{{\Pi_{x}^{(t)}} \in \mathcal{U}_{x}^{(k+1)}   } - \sum\limits_{y\in \mathcal{Y}_x ^{(k+1)} } f_{xy}^{(k+1)}(\pi_{xy}^{(t)}) \\ \ \ \ \ \ \  + \sum\limits_{y\in\mathcal{Y}_x ^{(k+1)} } \alpha_{xy}(k)\pi_{xy}^{(t)}  + \frac{\eta}{2} \sum\limits_{y\in\mathcal{Y}_x ^{(k+1)}}  (\pi_{xy}^{(t)} - \pi_{xy}(k))^2 ,
\end{array}
\end{equation}
\begin{equation}
\label{eq:MainReReSol2VWOnline}
\begin{array}{l}
{\Pi_{y}^{(s)}}(k+1)  \in \arg\min\limits_{{\Pi_{y}^{(s)}} \in \mathcal{U}_{y}^{(k+1)} } - \sum\limits_{x\in \mathcal{X}_y^{(k+1)} } g_{xy}^{(k+1)} (\pi_{xy}^{(s)})\\ \ \ \ \ \ \ \ - \sum\limits_{x\in\mathcal{X}_y^{(k+1)} }  \alpha_{xy}(k)\pi_{xy}^{(s)} + \frac{\eta}{2} \sum\limits_{x\in\mathcal{X}_y^{(k+1)}  }  (\pi_{xy}(k)-\pi_{xy}^{(s)} )^2 ,
\end{array}
\end{equation}
\begin{equation}
\label{eq:MainReReSol2BetaOnline}
\begin{array}{l}
\pi_{xy}(k+1)   =\frac{1}{2} (\pi_{xy}^{(t)}(k+1) + \pi_{xy}^{(s)}(k+1)) ,
\end{array}
\end{equation}
\begin{equation}
\label{eq:MainReReSol2AlphaOnline} 
\alpha_{xy}(k+1) = \alpha_{xy}(k) + \frac{\eta}{2} (\pi_{xy}^{(t)}(k+1) - \pi_{xy}^{(s)}(k+1)),
\end{equation}
where $\mathcal{U}_x^{(k+1)}:=\{ {\Pi_{x}^{(t)}} | \pi_{xy}^{(t)} \geq 0, y\in\mathcal{Y}_x ^{(k+1)} ; p_{x, l}^{(k+1)} \leq \sum_{y\in \mathcal{Y}_x^{(k+1)} }  \pi_{xy}^{(t)}  \leq p_{x, h}^{(k+1)} \}$, $\mathcal{U}_y^{(k+1)}:=\{{\Pi_{y}^{(s)}} |\pi_{xy}^{(s)} \geq 0, x\in\mathcal{X}_y^{(k+1)}; q_{y,l}^{(k+1)} \leq \sum_{x\in \mathcal{X}_y^{(k+1)}}  \pi_{xy}^{(s)} \leq q_{y,h}^{(k+1)} \}$.
\end{remark}
We can see from Remark \ref{rem:MainReReOnline} that each participant and each pair of participants update based on the current discrete optimal transport problem. We can continue finding the optimal matching without rerunning  the online algorithm when the discrete optimal transport problem changes over time. Note that the iterations in Remark \ref{rem:MainReReOnline} will always converge to the solution of the updated discrete optimal transport problem. However, if the problem changes quickly over time, the iterations may not find any optimal matching. One way to address this issue is to run a sufficient number of iterations to achieve the optimal matching of the current problem, and then update the problem and find the next optimal matching.

\section{Duality and Distributed Dual Discrete Optimal Transport With Linear Utilities}
\label{sec:DD}
In this section, we consider a special case of discrete optimal transport problems with linear objective functions and equality constraints, i.e., $f_{xy}(\pi_{xy}) = \gamma_{xy}\pi_{xy}$, $g_{xy}(\pi_{xy}) = \delta_{xy}\pi_{xy}$, $p_{x,l} = p_{x, h}=p_x$, and $q_{y, l} = q_{y, h}=q_y$. {Parameters $\gamma_{xy}\in\mathbb{R}_{\geq 0}$ and $\delta_{xy}\in\mathbb{R}_{\geq 0}$ are the unit utilities for $x$ and $y$, respectively.} Inequalities $p_{x, l} \leq \sum_{y\in \mathcal{Y}_x}  \pi_{xy}  \leq p_{x, h}$ and $q_{y, l} \leq \sum_{x\in \mathcal{X}_y}  \pi_{xy} \leq q_{y, h}$ become equalities $ \sum_{y\in \mathcal{Y}_x}  \pi_{xy}  = p_{x}$ and $\sum_{x\in \mathcal{X}_y}  \pi_{xy} = q_{y}$, respectively. Moreover, we assume that all sources match with all targets, i.e., $\mathcal{Y}_x = \mathcal{Y}$ and $\mathcal{X}_y = \mathcal{X}$. 

The discrete optimal transport in this case aims to find the optimal matching by solving the following problem:
\begin{equation}
\label{eq:OptimalTransport}
\begin{array}{c}
\max\limits_{ \{\pi_{xy} | \pi_{xy} \geq 0,  \{x,y\}\in\mathcal{G}\}  } \sum\limits_{x\in\mathcal{X}} \sum\limits_{y\in\mathcal{Y}} \pi_{xy} \gamma_{xy} +  \sum\limits_{y\in\mathcal{Y}}\sum\limits_{x\in\mathcal{X}}\pi_{xy}\delta_{xy} \\
\begin{array}{ccc}
{\text{s.t.}}&{\sum\limits_{y\in\mathcal{Y}} \pi_{xy} = p_x, \sum\limits_{x\in\mathcal{X}} \pi_{xy} =q_y,}&{\forall x\in\mathcal{X},y\in\mathcal{Y}.}
\end{array}
\end{array}
\end{equation}
Problem (\ref{eq:OptimalTransport}) is a well-studied problem in optimal transport \cite{villani2008optimal, galichon2015optimal}. We have the following condition regarding the existence of its solution which captures a situation when supply meets demand. 
\begin{condition}
\label{con:exisitanceOT}
There exists a solution of problem (\ref{eq:OptimalTransport}), which is not necessarily unique, if and only if the following equality holds:
\begin{equation}
\sum\limits_{x\in\mathcal{X}} p_x = \sum\limits_{y\in\mathcal{Y}} q_y. 
\end{equation}
\end{condition}
We can easily verify that Condition \ref{con:exisitanceOT} satisfies Condition \ref{con:MainNecessary}. The optimal value, i.e., maximum total surplus, is unique, which is also referred as the optimal transport distance:
\begin{equation}
\label{eq:OptimalTransportDistance}
\begin{array}{l}
D_1 :=
\\ 
\sup\limits_{\{\pi_{xy}\geq 0\} } \left\{\sum\limits_{x\in\mathcal{X}} \sum\limits_{y\in\mathcal{Y}} \pi_{xy} (\gamma_{xy} + \delta_{xy} ) \Big|  \sum\limits_{y\in\mathcal{Y}} \pi_{xy} = p_x, \sum\limits_{x\in\mathcal{X}} \pi_{xy} =q_y.   \right\}.
\end{array}
\end{equation}
After introducing $\pi_{xy}^{(t)}$ and $\pi_{xy}^{(s)}$ with a similar approach in the previous section, we can rewrite problem (\ref{eq:OptimalTransport}) as follows:
\begin{equation}
\label{eq:OTReRe}
\begin{array}{c}
\min\limits_{\{{\Pi_\mathcal{G}}^{(t)}\in\mathcal{U}_1,{\Pi_\mathcal{G}^{(s)}}\in\mathcal{U}_2,{\Pi_\mathcal{G}}\}} - \sum\limits_{x\in\mathcal{X}} \sum\limits_{y\in \mathcal{Y}} \pi_{xy}^{(t)}\gamma_{xy} - \sum\limits_{y\in\mathcal{Y}  }  \sum\limits_{x\in \mathcal{X}}\pi_{xy}^{(s)}\delta_{xy} \\
\begin{array}{cc}
{\begin{array}{c}
	\text{s.t.}
	\end{array}}&{ \begin{array}{cc}
	{\pi_{xy}^{(t)} = \pi_{xy},}&{\forall \{x,y\}\in\mathcal{G},} \\  {\pi_{xy} = \pi_{xy}^{(s)},}&{\forall \{x,y\}\in\mathcal{G},}
	\end{array}}
\end{array}
\end{array}
\end{equation}
where $\mathcal{U}_1:=\left\lbrace {\Pi_\mathcal{G}^{(t)}} | \pi_{xy}^{(t)} \geq 0,   \sum_{y\in \mathcal{Y}}  \pi_{xy}^{(t)}  =p_x, \{x,y\} \in \mathcal{G} \right\rbrace$ and $ 
\mathcal{U}_2:=\left\lbrace {\Pi_\mathcal{G}^{(s)}}  |\pi_{xy}^{(s)} \geq 0, \sum_{x\in \mathcal{X}}  \pi_{xy}^{(s)} = q_y, \{x,y\} \in \mathcal{G} \right\rbrace$.
By Proposition \ref{pro:MainReRe2}, we have:
\begin{proposition}
	\label{pro:OTRePrimal2}
	{We can obtain the following iterations after applying the iterations of ADMM to Problem (\ref{eq:OTReRe})}:
	\begin{equation}
	\label{eq:OTRePrimalCon2Sol2UW}
	\begin{array}{l}
	{\Pi_x^{(t)}}(k+1)  \in \arg\min\limits_{{\Pi_x^{(t)}} \in \mathcal{U}_{x}} - \sum\limits_{y\in \mathcal{Y}}  \pi_{xy}^{(t)} \gamma_{xy} \\ \ \ \ \ \ \ \ \ \ \  + \sum\limits_{y\in\mathcal{Y}} \alpha_{xy}(k)\pi_{xy}^{(t)}  + \frac{\eta}{2} \sum\limits_{y\in\mathcal{Y}}  (\pi_{xy}^{(t)} - \pi_{xy}(k))^2 ,
	\end{array}
	\end{equation}
	\begin{equation}
	\label{eq:OTRePrimalCon2Sol2VW}
	\begin{array}{l}
	{\Pi_y^{(s)}}(k+1)   \in \arg\min\limits_{{\Pi_y^{(s)}}  \in \mathcal{U}_{y}} - \sum\limits_{x\in \mathcal{X}}\pi_{xy}^{(s)} \delta_{xy} \\ \ \ \ \ \ \ \ \ \ \  - \sum\limits_{x\in\mathcal{X}}  \alpha_{xy}(k)\pi_{xy}^{(s)}+ \frac{\eta}{2} \sum\limits_{x\in\mathcal{X}}  (\pi_{xy}(k)-\pi_{xy}^{(s)} )^2 ,
	\end{array}
	\end{equation}
	\begin{equation}
	\label{eq:OTRePrimalCon2Sol2Beta}
	\begin{array}{l}
	\pi_{xy}(k+1)   =\frac{1}{2} (\pi_{xy}^{(t)}(k+1) + \pi_{xy}^{(s)}(k+1)) ,
	\end{array}
	\end{equation}
	\begin{equation}
	\label{eq:OTRePrimalCon2Sol2Alpha}
	\alpha_{xy}(k+1) = \alpha_{xy}(k) + \frac{\eta}{2} (\pi_{xy}^{(t)}(k+1) - \pi_{xy}^{(s)}(k+1)),
	\end{equation}
	where $\mathcal{U}_x:=\{ {\Pi_x^{(t)}} | \pi_{xy}^{(t)} \geq 0, y\in\mathcal{Y}; \sum_{y\in \mathcal{Y}}  \pi_{xy}^{(t)}  = p_x \}$, $\mathcal{U}_y:=\{{\Pi_y^{(s)}} |\pi_{xy}^{(s)} \geq 0,  x\in\mathcal{X}; \sum_{x\in \mathcal{X}}  \pi_{xy}^{(s)} = q_y \}$.
\end{proposition}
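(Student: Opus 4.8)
The plan is to recognize Problem (\ref{eq:OTReRe}) as a special instance of Problem (\ref{eq:MainReRe}) and then invoke Proposition \ref{pro:MainReRe2} directly. Concretely, Problem (\ref{eq:OTReRe}) is obtained from Problem (\ref{eq:MainReRe}) by taking the utility functions to be the linear forms $f_{xy}(\pi_{xy}) = \gamma_{xy}\pi_{xy}$ and $g_{xy}(\pi_{xy}) = \delta_{xy}\pi_{xy}$, by setting $p_{x,l} = p_{x,h} = p_x$ and $q_{y,l} = q_{y,h} = q_y$ so that the box constraints defining $\mathcal{U}_1,\mathcal{U}_2$ collapse to the equalities $\sum_{y\in\mathcal{Y}}\pi_{xy}^{(t)} = p_x$ and $\sum_{x\in\mathcal{X}}\pi_{xy}^{(s)} = q_y$, and by taking $\mathcal{Y}_x = \mathcal{Y}$ and $\mathcal{X}_y = \mathcal{X}$. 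First I would check that the hypotheses under which Proposition \ref{pro:MainReRe2} was derived still hold: linear functions are concave, so Assumption \ref{ass:fg} is satisfied; the sets $\mathcal{U}_1,\mathcal{U}_2$ (hence $\mathcal{U}_x,\mathcal{U}_y$) remain closed and convex, being intersections of affine equalities with the nonnegative orthant; and they are nonempty by Condition \ref{con:exisitanceOT}, which implies Condition \ref{con:MainNecessary}. Hence the ADMM convergence result of Appendix A and the iteration scheme (\ref{eq:MainReReSolUW})-(\ref{eq:MainReReSolAlpha2}), together with its simplified form (\ref{eq:MainReReSol2UW})-(\ref{eq:MainReReSol2Alpha}), apply without modification.

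The next step is substitution. In (\ref{eq:MainReReSol2UW}) the term $-\sum_{y\in\mathcal{Y}_x} f_{xy}(\pi_{xy}^{(t)})$ becomes $-\sum_{y\in\mathcal{Y}}\pi_{xy}^{(t)}\gamma_{xy}$, which gives (\ref{eq:OTRePrimalCon2Sol2UW}); symmetrically, replacing $-\sum_{x\in\mathcal{X}_y} g_{xy}(\pi_{xy}^{(s)})$ by $-\sum_{x\in\mathcal{X}}\pi_{xy}^{(s)}\delta_{xy}$ in (\ref{eq:MainReReSol2VW}) yields (\ref{eq:OTRePrimalCon2Sol2VW}). The $\pi_{xy}$-update (\ref{eq:MainReReSol2Beta}) and the multiplier update (\ref{eq:MainReReSol2Alpha}) involve neither the utility functions nor the index sets $\mathcal{Y}_x,\mathcal{X}_y$, so they carry over verbatim as (\ref{eq:OTRePrimalCon2Sol2Beta}) and (\ref{eq:OTRePrimalCon2Sol2Alpha}); the feasible sets are simply rewritten with the equality constraints and full index sets to obtain the stated $\mathcal{U}_x,\mathcal{U}_y$. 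I would also note, exactly as in the proof of Proposition \ref{pro:MainReRe2}, that the reduction from the two multipliers $\alpha_{xy1},\alpha_{xy2}$ to a single $\alpha_{xy}$ rests only on the unconstrained quadratic $\pi_{xy}$-update, which is unchanged here, so the identity $\alpha_{xy1}(k) = \alpha_{xy2}(k)$ for $k>0$ still holds and the averaging form (\ref{eq:OTRePrimalCon2Sol2Beta}) is legitimate. Each of the $\Pi_x^{(t)}$- and $\Pi_y^{(s)}$-subproblems minimizes a strictly convex quadratic (thanks to the $\tfrac{\eta}{2}$ penalty terms) over a nonempty closed convex set, so the $\arg\min$ sets are well posed.

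I do not anticipate a genuine obstacle, since the argument is a specialization of an already-proved result. The single point requiring a line of care is confirming that collapsing the inequality box constraints into equalities does not disturb the ADMM machinery --- that is, that $\mathcal{U}_x$ and $\mathcal{U}_y$ stay closed, convex, and nonempty so that the subproblems are solvable and the Appendix A convergence lemma continues to apply. Closedness and convexity are immediate, and nonemptiness follows from Condition \ref{con:exisitanceOT}. With that remark in place, the proposition follows by direct substitution into Proposition \ref{pro:MainReRe2}.
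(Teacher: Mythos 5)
Your proposal is correct and follows essentially the same route as the paper, which likewise obtains the iterations by specializing Proposition \ref{pro:MainReRe2} via the substitutions $f_{xy}(\pi_{xy}^{(t)}) \rightarrow \gamma_{xy}\pi_{xy}^{(t)}$, $g_{xy}(\pi_{xy}^{(s)}) \rightarrow \delta_{xy}\pi_{xy}^{(s)}$, and the collapse of the box constraints to equalities with $\mathcal{Y}_x = \mathcal{Y}$, $\mathcal{X}_y = \mathcal{X}$. Your additional checks on concavity, convexity, and nonemptiness of the feasible sets are sound and simply make explicit what the paper leaves implicit.
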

\begin{proof}
We can find the iterations by letting $f_{xy}(\pi_{xy}^{(t)}) \rightarrow \pi_{xy}^{(t)}\gamma_{xy}$, $g_{xy}(\pi_{xy}^{(s)}) \rightarrow \pi_{xy}^{(s)}\delta_{xy}$, $p_{x, l} \leq \sum_{y\in \mathcal{Y}_x}  \pi_{xy}^{(t)}  \leq p_{x, h} \rightarrow \sum_{y\in \mathcal{Y}}  \pi_{xy}  ^{(t)}= p_{x} $, and $q_{y, l} \leq \sum_{x\in \mathcal{X}_y}  \pi_{xy}^{(s)} \leq q_{y, h} \rightarrow \sum_{x\in \mathcal{X}}  \pi_{xy}^{(s)} = q_{y}$.
\end{proof}

One fundamental result of problems (\ref{eq:OptimalTransport}) and (\ref{eq:OTReRe}) is the Monge-Kantorovich duality\cite{galichon2015optimal}, which finds the dual problem of discrete optimal transport and shows no duality gap between the primal problem and the dual problem. The dual problem enriches the economic interpretations and reveals the pricing scheme of resource matching. In the following subsections, we discuss in detail the dual problem of problem (\ref{eq:OTReRe}), and we present a distributed dual algorithm which is found by solving the dual problem with ADMM. We prove that the iterations of the dual algorithm is equivalent to the iterations of the primal algorithm in Proposition \ref{pro:OTRePrimal2}. Moreover, we show that $\alpha_{xy}$ in Proposition \ref{pro:OTRePrimal2} captures the price that target $x$ pays to source $y$ as the compensation of transferring resources.  
\subsection{Dual Problem of Discrete Optimal Transport}
We derive the dual problem of problem (\ref{eq:OTReRe}) which can be used to develop the distributed dual algorithm of discrete optimal transport and study the pricing scheme of resource matching. 
\begin{theorem}
\label{the:OTReDual}
Let {$u_x\in\mathbb{R}$, $v_y\in\mathbb{R}$, $w_{xy}^{(t)}\in\mathbb{R}$, and $w_{xy}^{(s)}\in\mathbb{R}$} denote the Lagrange multipliers with respect to $\sum_{y\in\mathcal{Y}} \pi_{xy}^{(t)}= p_x$, $\sum_{x \in\mathcal{X}} \pi_{xy}^{(s)} = q_y$, $\pi_{xy}^{(t)} = \pi_{xy}$, and $\pi_{xy} = \pi_{xy}^{(s)}$, respectively, the dual problem of problem (\ref{eq:OTReRe}) is
\begin{equation}
\label{eq:OTReDual}
\begin{array}{c}
\min\limits_{\{{\mathbf{u}_\mathcal{X}},{\mathbf{v}_{\mathcal{Y}}},{\mathbf{w}_{\mathcal{G}}^{(t)}},{\mathbf{w}_{\mathcal{G}}^{(s)}}\}}\sum\limits_{x\in\mathcal{X}}u_x p_x + \sum\limits_{y\in\mathcal{Y}} v_y q_y\\
\begin{array}{cc}
{\begin{array}{c}
	\text{s.t.}\\ \ 
	\end{array}}&{ \begin{array}{cc}
	{ \gamma_{xy} - u_x - w_{xy}^{(t)} \leq  0,}&{\forall \{x,y\}\in\mathcal{G};}\\
	{\delta_{xy} - v_y  + w_{xy}^{(s)} \leq 0,}&{\forall \{x,y\}\in\mathcal{G},}\\
	{w_{xy}^{(t)} = w_{xy}^{(s)},}&{\forall \{x,y\}\in\mathcal{G}.}
	\end{array}}
\end{array}
\end{array}
\end{equation}
\end{theorem}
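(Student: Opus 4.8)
The plan is to derive (\ref{eq:OTReDual}) as the standard Lagrangian dual of the linear program (\ref{eq:OTReRe}). First I would form the Lagrangian by dualizing the two linear equality constraints $\sum_{y\in\mathcal{Y}}\pi_{xy}^{(t)}=p_x$ and $\sum_{x\in\mathcal{X}}\pi_{xy}^{(s)}=q_y$ together with the two families of consensus constraints $\pi_{xy}^{(t)}=\pi_{xy}$ and $\pi_{xy}=\pi_{xy}^{(s)}$, while retaining the nonnegativity constraints $\pi_{xy}^{(t)}\geq 0$, $\pi_{xy}^{(s)}\geq 0$ as domain restrictions. Writing the consensus constraints as $\pi_{xy}^{(t)}-\pi_{xy}$ and $\pi_{xy}-\pi_{xy}^{(s)}$ and attaching multipliers $u_x$, $v_y$, $w_{xy}^{(t)}$, $w_{xy}^{(s)}$ as in the statement, the resulting Lagrangian is \emph{linear} in every primal variable, so the dual function is obtained by minimizing each group of linear terms separately over its domain.

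Next I would collect the coefficient of each primal variable. The coefficient of $\pi_{xy}^{(t)}\geq 0$ is $-\gamma_{xy}+u_x+w_{xy}^{(t)}$, so the infimum over the nonnegative orthant equals $0$ precisely when $\gamma_{xy}-u_x-w_{xy}^{(t)}\leq 0$ and $-\infty$ otherwise; the coefficient of $\pi_{xy}^{(s)}\geq 0$ is $-\delta_{xy}+v_y-w_{xy}^{(s)}$, giving $0$ exactly when $\delta_{xy}-v_y+w_{xy}^{(s)}\leq 0$; and the coefficient of the \emph{free} variable $\pi_{xy}$ is $-w_{xy}^{(t)}+w_{xy}^{(s)}$, whose infimum over $\mathbb{R}$ is $-\infty$ unless $w_{xy}^{(t)}=w_{xy}^{(s)}$, which yields the third dual constraint. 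The only constant terms that survive the minimization are $-\sum_{x\in\mathcal{X}}u_x p_x-\sum_{y\in\mathcal{Y}}v_y q_y$, so the dual function equals this expression on the feasible set of multipliers and $-\infty$ elsewhere. Maximizing it is equivalent to minimizing $\sum_{x\in\mathcal{X}}u_x p_x+\sum_{y\in\mathcal{Y}}v_y q_y$ subject to the three derived constraints, which is exactly problem (\ref{eq:OTReDual}).

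I would close by remarking on the absence of a duality gap: problem (\ref{eq:OTReRe}) is a linear program that is feasible under Condition \ref{con:exisitanceOT} and whose optimal value is the finite quantity $-D_1$ from (\ref{eq:OptimalTransportDistance}) because the feasible set projects onto the (bounded) transportation polytope; hence LP strong duality applies and (\ref{eq:OTReDual}) attains the same value, which is the Monge--Kantorovich duality specialized to the consensus reformulation.

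The main obstacle I anticipate is not conceptual but clerical: keeping the signs consistent throughout, in particular ensuring that $w_{xy}^{(t)}$ enters the terms in $\pi_{xy}$ with a minus sign and $w_{xy}^{(s)}$ with a plus sign, so that the free-variable condition comes out as $w_{xy}^{(t)}=w_{xy}^{(s)}$ rather than a spurious sign-flipped identity. A secondary point worth spelling out is that the minimizations over $\pi_{xy}^{(t)}$ and $\pi_{xy}^{(s)}$ are taken over the nonnegative orthant, not all of $\mathbb{R}$, which is exactly why the primal equality constraints turn into inequality constraints in (\ref{eq:OTReDual}).
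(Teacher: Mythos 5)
Your derivation is correct and is essentially the paper's own argument: both compute the standard Lagrangian (LP) dual of (\ref{eq:OTReRe}), the only cosmetic difference being that the paper dualizes the nonnegativity constraints explicitly via multipliers $\epsilon_{xy}^{(t)},\epsilon_{xy}^{(s)}\geq 0$ and then eliminates them, whereas you keep $\pi_{xy}^{(t)},\pi_{xy}^{(s)}\geq 0$ as domain restrictions and read the inequality constraints off the finiteness of the infimum. Your sign bookkeeping matches the stated dual exactly, and the closing strong-duality remark, while not needed for the theorem itself, is consistent with Proposition~\ref{pro:OTRe}(ii).
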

\begin{proof}
See Appendix C.
\end{proof}
We summarize several interesting results of problems (\ref{eq:OTReRe}) and (\ref{eq:OTReDual}) with the following proposition.
\begin{proposition}
	\label{pro:OTRe}
	The following facts hold for the primal problem (\ref{eq:OTReRe}) and the dual problem (\ref{eq:OTReDual}):
	\begin{itemize}
		\item[(i)] Problem (\ref{eq:OTReRe}) is the dual problem of problem (\ref{eq:OTReDual}).
		\item[(ii)] (Monge-Kantorovich duality) The optimal value of problem  (\ref{eq:OTReRe}) is equal to the optimal value of problem (\ref{eq:OTReDual}). The optimal transport distance (\ref{eq:OptimalTransportDistance}) can also be written as
		\begin{equation}
		\label{eq:OTReDualDistance}
		\begin{array}{l}
		D_2:=
		\\ \inf\limits_{\{{\mathbf{u},\mathbf{v},\mathbf{w}^{(t)},\mathbf{w}^{(s)}}\}} \left\{  \sum\limits_{x\in\mathcal{X}}u_x p_x +\sum\limits_{y\in\mathcal{Y}} v_y q_y \left| \begin{array}{c}
		\gamma_{xy} - u_x - w_{xy}^{(t)} \leq 0,\\ 
		\delta_{xy} - v_y  + w_{xy}^{(s)} \leq 0,\\
		w_{xy}^{(t)} = w_{xy}^{(s)}.
		\end{array}  \right .\right\}.
		\end{array}
		\end{equation}
		\item[(iii)] The solution $\{{\Pi_\mathcal{G}^*}, {\Pi_\mathcal{G}^{(t)*}},{\Pi_\mathcal{G}^{(s)*}}\}$ of problem (\ref{eq:OTReRe}) may not be unique but $\pi_{xy}^*=\pi_{xy}^{(t)*} =\pi_{xy}^{(s)*}$ always holds; the solution $\{u_x^*,v_y^*, w_{xy}^{(t)*},w_{xy}^{(s)*}\}$ of problem (\ref{eq:OTReDual}) is not unique but $w_{xy}^{(t)*} =w_{xy}^{(s)*}$ always holds. Let $C\in\mathbb{R}$ denote a constant, $\{u_x^*+C,v_y^*-C, w_{xy}^{(t)*}-C,w_{xy}^{(s)*}-C\}$ is also a solution of problem (\ref{eq:OTReDual}). 	
		\item[(iv)] if $\pi_{xy}^{(t)*}> 0$ or $\pi_{xy}^{(s)*}> 0$, we have $u_x^*= \gamma_{xy}-w_{xy}^{(t)*}$, $v_y^* = \delta_{xy}+w_{xy}^{(s)*}$, and $u_x^* + v_y^* = \gamma_{xy} + \delta_{xy}$. 
		\item[(v)] $u_x^*,v_y^*$, $w_{xy}^{(t)*}$, and $w_{xy}^{(s)*}$ satisfy: 
		\begin{equation}
		\label{eq:OTReDualSol}
		\begin{array}{cc}
		{u_x^* = \max\limits_{y\in\mathcal{Y}} \{\gamma_{xy} - w_{xy}^{(t)*} \}, }&{v_y^* = \max\limits_{x\in\mathcal{X}} \{\delta_{xy} + w_{xy}^{(s)*} \}.} 
		\end{array}
		\end{equation}
	\end{itemize}
\end{proposition}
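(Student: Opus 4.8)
The plan is to treat this as a package of standard linear-programming-duality facts about the pair (\ref{eq:OTReRe})/(\ref{eq:OTReDual}): I would dispatch (i) and (ii) by strong LP duality, (iii) by direct substitution together with linearity, (iv) by complementary slackness, and (v) by a one-variable optimality argument. For (i), I would form the Lagrangian of (\ref{eq:OTReDual}) by attaching multipliers $\pi_{xy}^{(t)}$, $\pi_{xy}^{(s)}$, $\pi_{xy}$ to the three constraint families $\gamma_{xy}-u_x-w_{xy}^{(t)}\le 0$, $\delta_{xy}-v_y+w_{xy}^{(s)}\le 0$, $w_{xy}^{(t)}=w_{xy}^{(s)}$, respectively, then minimize out $u,v,w$ and check that the result coincides with (\ref{eq:OTReRe}); equivalently one invokes the fact that the dual of the dual of a linear program is the primal. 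For (ii), I would note that the primal feasible set is nonempty under Condition \ref{con:exisitanceOT} and is a transportation polytope, hence bounded since $0\le\pi_{xy}\le\min(p_x,q_y)$, so strong LP duality applies and there is no duality gap, which gives the equality of the two optimal values; eliminating $w_{xy}^{(t)}=w_{xy}^{(s)}=:w_{xy}$ from the dual constraints shows the dual feasible region projects onto $\{\,u_x+v_y\ge\gamma_{xy}+\delta_{xy}\,\}$, so $D_2$ in (\ref{eq:OTReDualDistance}) is exactly the Kantorovich dual of the transportation linear program defining $D_1$ in (\ref{eq:OptimalTransportDistance}), whence $D_1=D_2$.

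Part (iii) I would read off from the structure of the two programs. The identities $\pi_{xy}^*=\pi_{xy}^{(t)*}=\pi_{xy}^{(s)*}$ and $w_{xy}^{(t)*}=w_{xy}^{(s)*}$ hold at every feasible --- hence every optimal --- point, since they are hard equality constraints of (\ref{eq:OTReRe}) and (\ref{eq:OTReDual}). Non-uniqueness of the primal solution follows as in Theorem \ref{the:MainExist}, since the objective is linear and hence not strictly concave. For the constant shift, I would substitute $\{u_x^*+C,\,v_y^*-C,\,w_{xy}^{(t)*}-C,\,w_{xy}^{(s)*}-C\}$ and observe that each of the three dual constraint families is unchanged (the occurrences of $\pm C$ cancel in pairs) while the objective changes by $C\big(\sum_{x}p_x-\sum_{y}q_y\big)=0$ by Condition \ref{con:exisitanceOT}; this exhibits a second optimal dual point and simultaneously certifies that the dual solution is not unique.

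For (iv) and (v) I would invoke the complementary-slackness relations that accompany the strong duality of Theorem \ref{the:OTReDual}. In the primal-dual pair, complementary slackness pairs the nonnegative primal variable $\pi_{xy}^{(t)}$ with the dual inequality $\gamma_{xy}-u_x-w_{xy}^{(t)}\le 0$, so $\pi_{xy}^{(t)*}>0$ forces the latter to hold with equality, $u_x^*=\gamma_{xy}-w_{xy}^{(t)*}$, and symmetrically $\pi_{xy}^{(s)*}>0$ forces $v_y^*=\delta_{xy}+w_{xy}^{(s)*}$. By (iii), positivity of $\pi_{xy}^{(t)*}$ is equivalent to positivity of $\pi_{xy}^{(s)*}$, so both equalities hold together, and summing them while using $w_{xy}^{(t)*}=w_{xy}^{(s)*}$ yields $u_x^*+v_y^*=\gamma_{xy}+\delta_{xy}$, which is (iv). For (v), dual feasibility already gives $u_x^*\ge\gamma_{xy}-w_{xy}^{(t)*}$ for all $y$, i.e.\ $u_x^*\ge\max_{y\in\mathcal{Y}}\{\gamma_{xy}-w_{xy}^{(t)*}\}$; if this inequality were strict, lowering $u_x^*$ slightly would preserve dual feasibility and strictly decrease $\sum_{x}u_xp_x+\sum_{y}v_yq_y$ when $p_x>0$, contradicting optimality, so equality holds, and the symmetric argument gives $v_y^*=\max_{x\in\mathcal{X}}\{\delta_{xy}+w_{xy}^{(s)*}\}$.

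The delicate points are bookkeeping rather than conceptual. The complementary-slackness pairing used in (iv) has to be aligned with the precise sign conventions chosen for $u,v,w^{(t)},w^{(s)}$ in the derivation of Theorem \ref{the:OTReDual} in Appendix C, so that the nonnegative primal variable $\pi_{xy}^{(t)}$ genuinely pairs with the dual inequality $\gamma_{xy}-u_x-w_{xy}^{(t)}\le 0$; this is the one place a sign slip could invalidate the conclusion, and I would re-derive the Lagrangian of (\ref{eq:OTReRe}) explicitly to be safe. A minor caveat for (v) is that the argument needs $p_x>0$ (respectively $q_y>0$): if a marginal vanishes the corresponding potential is pinned down only up to an inequality, so one should either assume all marginals strictly positive --- harmless after deleting trivial targets and sources --- or restate (v) as the existence of an optimal dual point satisfying (\ref{eq:OTReDualSol}). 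Neither caveat affects the substance.
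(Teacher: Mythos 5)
Your proof is correct and follows essentially the same route as the paper's: dual-of-the-dual for (i), strong duality for (ii) (the paper invokes the minimax theorem where you invoke LP strong duality, which coincide here), direct substitution together with $\sum_{x}p_x=\sum_{y}q_y$ for (iii), complementary slackness for (iv), and the same ``lower the potential if the inequality is strict'' argument for (v). Your caveat that (v) requires $p_x>0$ (resp.\ $q_y>0$) is a valid refinement that the paper's own one-line argument silently assumes.
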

\begin{proof}
We can verify (i) by deriving the dual problem of problem (\ref{eq:OTReDual}). (ii) can be achieved by the minimax theorem \cite{sion1958general}. To prove (iii), we notice that $\gamma_{xy}-(u_x^*+C)-(w_{xy}^{(t)*}-C) = \gamma_{xy}-u_x^*-w_{xy}^{(t)*}$, $\delta_{xy}-(v_y^*-C)+(w_{xy}^{(s)*}-C) = \delta_{xy}-v_y^*+w_{xy}^{(s)*}$, $w_{xy}^{(t)*}-C=w_{xy}^{(s)*}-C$, and $\sum_{x\in\mathcal{X}}(u_x^*+C) p_x +\sum_{y\in\mathcal{Y}} (v_y^*-C) q_y = \sum_{x\in\mathcal{X}}u_x^* p_x +\sum_{y\in\mathcal{Y}} v_y^* q_y + \sum_{x\in\mathcal{X}}C p_x -\sum_{y\in\mathcal{Y}} C q_y =  \sum_{x\in\mathcal{X}}u_x^* p_x +\sum_{y\in\mathcal{Y}} v_y^* q_y $ as $\sum_{x\in\mathcal{X}}p_x = \sum_{y\in\mathcal{Y}} q_y$ in Condition \ref{con:exisitanceOT}. Thus, (iii) holds. We can obtain (iv) by complementary slackness \cite{dantzig1998linear}. To prove (v), we first notice that the constraints in problem (\ref{eq:OTReDual}) imply that $u_x^* \geq \max\limits_{y} \{\gamma_{xy} - w_{xy}^{(t)*}\}$ and $v_y^* \geq \max\limits_{x} \{\delta_{xy} + w_{xy}^{(s)*}\}$. However, if the inequalities hold strict, $u_x^*$ and $v_y^*$ will not be optimal as one can achieve a lower value of  $\sum_{x\in\mathcal{X}}u_x^* p_x +\sum_{y\in\mathcal{Y}} v_y^* q_y$ by selecting $u_x^* = \max\limits_{y} \{\gamma_{xy} - w_{xy}^{(t)*}\}$ and $v_y^*=\max\limits_{x} \{\delta_{xy} + w_{xy}^{(s)*}\}$, and thus the inequalities are actually equalities, which proves (v).
\end{proof}
The dual problem and Monge-Kantorovic duality has largely enriched the interpretation of optimal transport \cite{galichon2015optimal}. The dual variables $u_x$ and $v_y$ can be interpreted as the individual surpluses that $x$ and $y$ achieve in their matching with others, respectively; the dual consensus variables $w_{xy}^{(t)}$ and $w_{xy}^{(s)}$ can be interpreted as the proposed prices that $x$ pays and $y$ charges for transferring resources, respectively. Proposition \ref{pro:OTRe}(iv) indicates that a matching between $x$ and $y$ exists only if their individual surpluses $u_x+v_y$ are equal to their pair surpluses $\gamma_{xy} + \delta_{xy}$, which is also reflected in (\ref{eq:OTReDual}) on the minimization over the overall individual surpluses under constraints $u_x\geq \gamma_{xy}-w_{xy}^{(t)}$, $v_y\geq \delta_{xy}+w_{xy}^{(s)}$, and $w_{xy}^{(t)}=w_{xy}^{(s)}$, i.e., $u_x+v_y\geq \gamma_{xy}+\delta_{xy}$. The prices $w_{xy}^{(t)}$ and $w_{xy}^{(s)}$ only serve as the compensation for transferring resources and they do not change the total surplus $u_x^*+v_y^*=\gamma_{xy} + \delta_{xy}$ of $x$ and $y$. We can see from Proposition \ref{pro:OTRe}(v) that the objective of each participant is to maximize its individual surplus by taking into account both its pair surpluses and the prices it pays or charges.

The objective functions of problems (\ref{eq:OTReRe}) and (\ref{eq:OTReDual}) capture the total surplus by summing over pair surpluses and individual surpluses, respectively. On the one hand, we can see from problem (\ref{eq:OTReRe}) that the primal problem of discrete optimal transport aims to find the optimal matching rules that maximize the total surplus of all pairs between $\mathcal{X}$ and $\mathcal{Y}$; on the other hand, we can see from Proposition \ref{pro:OTRe}(v) that the dual problem of discrete optimal transport aims to find an equilibrium where all participants reach their highest individual surpluses. The Monge-Kantorovich duality in Proposition \ref{pro:OTRe}(ii) indicates that both interpretations return the same optimal values and reach the maximum total surplus of all sources and targets. 

\subsection{Distributed Dual Algorithm}
We have obtained a fully distributed primal algorithm to solve problem (\ref{eq:OTReRe}) with ADMM as shown in Proposition \ref{pro:OTRePrimal2}, and we can also obtain a fully distributed dual algorithm to solve the dual problem (\ref{eq:OTReDual}) using similar methods. We first introduce consensus variables $w_{xy}$ to decompose $w_{xy}^{(t)} = w_{xy}^{(s)}$ as $w_{xy}^{(t)} = w_{xy}$ and $w_{xy} = w_{xy}^{(s)}$. Thus, problem (\ref{eq:OTReDual}) can be rewritten as
\begin{equation}
\label{eq:OTReDualCon}
\begin{array}{c}
\min\limits_{\{ \{ {\mathbf{u}_\mathcal{X}},{\mathbf{w}_\mathcal{G}^{(t)}}\}\in\mathcal{U}_1',\{{\mathbf{v}_\mathcal{Y}},{\mathbf{w}_{\mathcal{G}}^{(s)}}\}\in\mathcal{U}_2',{\mathbf{w}_\mathcal{G}}\}} \sum\limits_{x\in\mathcal{X}} u_x p_x + \sum\limits_{y\in\mathcal{Y}} v_y q_y\\
\begin{array}{cc}
{\begin{array}{c}
\text{s.t.}
\end{array}}&{ \begin{array}{cc}
 {w_{xy}^{(t)} = w_{xy}, w_{xy} = w_{xy}^{(s)},}&{\forall \{x,y\}\in\mathcal{G},}
\end{array}}
\end{array}
\end{array}
\end{equation}
where $\mathcal{U}_1':=\{ {\mathbf{u}_\mathcal{X}},{\mathbf{w}_\mathcal{G}^{(t)}} | \gamma_{xy} - u_x - w_{xy}^{(t)} \leq 0,  \{x,y\}\in\mathcal{G}  \}$, and $\mathcal{U}_2':=\{{\mathbf{v}_\mathcal{Y}},{\mathbf{w}_\mathcal{G}^{(s)}} |  \delta_{xy} - v_y + w_{xy}^{(s)} \leq 0,  \{x,y\}\in\mathcal{G}  \}$.
\begin{proposition}
\label{pro:OTRe2}
{We can obtain the following iterations after applying the iterations of ADMM to Problem (\ref{eq:OTReDualCon})}:
\begin{equation}
\label{eq:OTReDualCon2Sol2UW}
\begin{array}{l}
\left\{u_x(k+1),{\mathbf{w}_x^{(t)}}(k+1)  \right\}   \in \arg\min\limits_{\{u_x,{\mathbf{w}_{x}^{(t)}}\} \in \mathcal{U}_{x}}  u_xp_x \\ \ \ \ \ \ \ \ \ \ \  + \sum\limits_{y\in\mathcal{Y}} \beta_{xy}(k)w_{xy}^{(t)}   + \frac{\widehat{\eta}}{2} \sum\limits_{y\in\mathcal{Y}}\left(w_{xy}^{(t)} - w_{xy}(k)  \right)^2 ,
\end{array}
\end{equation}
\begin{equation}
\label{eq:OTReDualCon2Sol2VW}
\begin{array}{l}
\left\{v_y(k+1),{\mathbf{w}_y^{(s)}}(k+1)\right\}  \in \arg\min\limits_{\{v_y,{\mathbf{w}_y^{(s)}} \} \in \mathcal{U}_{y}}  v_yq_y \\ \ \ \ \ \ \ \ \ \ \ - \sum\limits_{x\in\mathcal{X}} \beta_{xy}(k)w_{xy}^{(s)}  + \frac{\widehat{\eta}}{2}  \sum\limits_{x\in\mathcal{X}} \left(w_{xy}(k) -w_{xy}^{(s)} \right)^2 ,
\end{array}
\end{equation}
\begin{equation}
\label{eq:OTReDualCon2Sol2Beta}
\begin{array}{l}
w_{xy}(k+1) = \frac{1}{2} (w_{xy}^{(t)}(k+1) + w_{xy}^{(s)}(k+1)),
\end{array}
\end{equation}
\begin{equation}
\label{eq:OTReDualCon2Sol2Alpha}
\beta_{xy}(k+1) = \beta_{xy}(k) + \frac{\widehat{\eta}}{2} (w_{xy}^{(t)}(k+1) - w_{xy}^{(s)}(k+1)),
\end{equation}
where $\mathcal{U}_x:=\{ u_x,{\mathbf{w}_x^{(t)}} |  \gamma_{xy} - u_x - w_{xy}^{(t)} \leq 0,  y\in\mathcal{Y}  \}$, $\mathcal{U}_y:=\{v_y,{\mathbf{w}_y^{(s)}} |\delta_{xy} - v_y + w_{xy}^{(s)} \leq 0,  x\in\mathcal{X}  \}$.
\end{proposition}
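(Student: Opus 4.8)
The plan is to mirror the derivation of Proposition~\ref{pro:MainReRe} and its simplification in Proposition~\ref{pro:MainReRe2}, since Problem~(\ref{eq:OTReDualCon}) has exactly the two-block-plus-consensus structure to which standard ADMM applies: the first block is $\{\mathbf{u}_\mathcal{X},\mathbf{w}_\mathcal{G}^{(t)}\}$ constrained to $\mathcal{U}_1'$, the second block is $\{\mathbf{v}_\mathcal{Y},\mathbf{w}_\mathcal{G}^{(s)}\}$ constrained to $\mathcal{U}_2'$, the consensus variables are $\mathbf{w}_\mathcal{G}$, and the coupling constraints $w_{xy}^{(t)}=w_{xy}$, $w_{xy}=w_{xy}^{(s)}$ are of the same form as $\pi_{xy}^{(t)}=\pi_{xy}$, $\pi_{xy}=\pi_{xy}^{(s)}$ in~(\ref{eq:MainReRe}). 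First I would introduce Lagrange multipliers $\beta_{xy1}$ and $\beta_{xy2}$ for the two families of consensus constraints together with a penalty parameter $\widehat{\eta}$, and write out the augmented Lagrangian for~(\ref{eq:OTReDualCon}) as the sum of $\sum_x u_xp_x+\sum_y v_yq_y$, the linear multiplier terms, and the quadratic penalties $\frac{\widehat{\eta}}{2}\sum_{\{x,y\}\in\mathcal{G}}(w_{xy}^{(t)}-w_{xy})^2$ and $\frac{\widehat{\eta}}{2}\sum_{\{x,y\}\in\mathcal{G}}(w_{xy}-w_{xy}^{(s)})^2$.

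Next I would apply the three ADMM block updates in turn, exactly as in Appendix~A and Proposition~\ref{pro:MainReRe}. Minimizing the augmented Lagrangian over $\{\mathbf{u}_\mathcal{X},\mathbf{w}_\mathcal{G}^{(t)}\}\in\mathcal{U}_1'$ with the remaining variables fixed drops the terms independent of this block; since both the retained objective $\sum_x u_xp_x+\sum_{\{x,y\}\in\mathcal{G}}\beta_{xy1}(k)w_{xy}^{(t)}+\frac{\widehat{\eta}}{2}\sum_{\{x,y\}\in\mathcal{G}}(w_{xy}^{(t)}-w_{xy}(k))^2$ and the constraints $\gamma_{xy}-u_x-w_{xy}^{(t)}\le 0$ decouple across $x$, this yields~(\ref{eq:OTReDualCon2Sol2UW}) with $\beta_{xy1}$ in place of $\beta_{xy}$. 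The symmetric minimization over $\{\mathbf{v}_\mathcal{Y},\mathbf{w}_\mathcal{G}^{(s)}\}\in\mathcal{U}_2'$, which decouples across $y$, gives~(\ref{eq:OTReDualCon2Sol2VW}) with $\beta_{xy2}$ in place of $\beta_{xy}$. The $\mathbf{w}_\mathcal{G}$-update is an unconstrained strictly convex quadratic that decouples across pairs $\{x,y\}$ and solves in closed form to $w_{xy}(k+1)=\frac{1}{2\widehat{\eta}}(\beta_{xy1}(k)-\beta_{xy2}(k))+\frac{1}{2}(w_{xy}^{(t)}(k+1)+w_{xy}^{(s)}(k+1))$, and the dual updates are $\beta_{xy1}(k+1)=\beta_{xy1}(k)+\widehat{\eta}(w_{xy}^{(t)}(k+1)-w_{xy}(k+1))$ and $\beta_{xy2}(k+1)=\beta_{xy2}(k)+\widehat{\eta}(w_{xy}(k+1)-w_{xy}^{(s)}(k+1))$.

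The final step is the simplification, which repeats the argument in the proof of Proposition~\ref{pro:MainReRe2} essentially verbatim: substituting the closed form for $w_{xy}(k+1)$ into the two dual updates gives $\beta_{xy1}(k+1)=\beta_{xy2}(k+1)=\frac{1}{2}(\beta_{xy1}(k)+\beta_{xy2}(k))+\frac{\widehat{\eta}}{2}(w_{xy}^{(t)}(k+1)-w_{xy}^{(s)}(k+1))$, so $\beta_{xy1}(k)=\beta_{xy2}(k)$ for all $k>0$. Writing this common value as $\beta_{xy}(k)$ collapses the $w_{xy}$-update to the averaging rule~(\ref{eq:OTReDualCon2Sol2Beta}), the two dual updates to~(\ref{eq:OTReDualCon2Sol2Alpha}), and the block subproblems to~(\ref{eq:OTReDualCon2Sol2UW})--(\ref{eq:OTReDualCon2Sol2VW}). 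I expect the only genuine bookkeeping subtlety---hence the ``main obstacle''---to be keeping $u_x$ grouped with $\mathbf{w}_x^{(t)}$ (and $v_y$ with $\mathbf{w}_y^{(s)}$) throughout: these surplus variables appear neither in the consensus constraints nor in the quadratic penalty, so one must check that they are still correctly carried along inside each block minimization and that the per-$x$ (respectively per-$y$) separation is not broken by the constraint sets $\mathcal{U}_1'$, $\mathcal{U}_2'$. Everything else is the routine ADMM expansion already carried out for the primal case.
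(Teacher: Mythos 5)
Your proposal is correct and follows essentially the same route as the paper's proof in Appendix~D: cast problem~(\ref{eq:OTReDualCon}) in the two-block ADMM form of Appendix~A, decompose the block updates per target and per source with multipliers $\beta_{xy1}$, $\beta_{xy2}$, solve the $w_{xy}$-update in closed form, and then show $\beta_{xy1}(k)=\beta_{xy2}(k)$ for $k>0$ to collapse the iterations to~(\ref{eq:OTReDualCon2Sol2UW})--(\ref{eq:OTReDualCon2Sol2Alpha}). The only cosmetic difference is that the paper writes the consensus constraints explicitly in matrix form $A\mathbf{x}=\mathbf{y}$ before decomposing, which your argument handles implicitly.
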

\begin{proof}
See Appendix D.
\end{proof}

\begin{figure}[]
\centering
\subfigure{\includegraphics[width=0.223\textwidth]{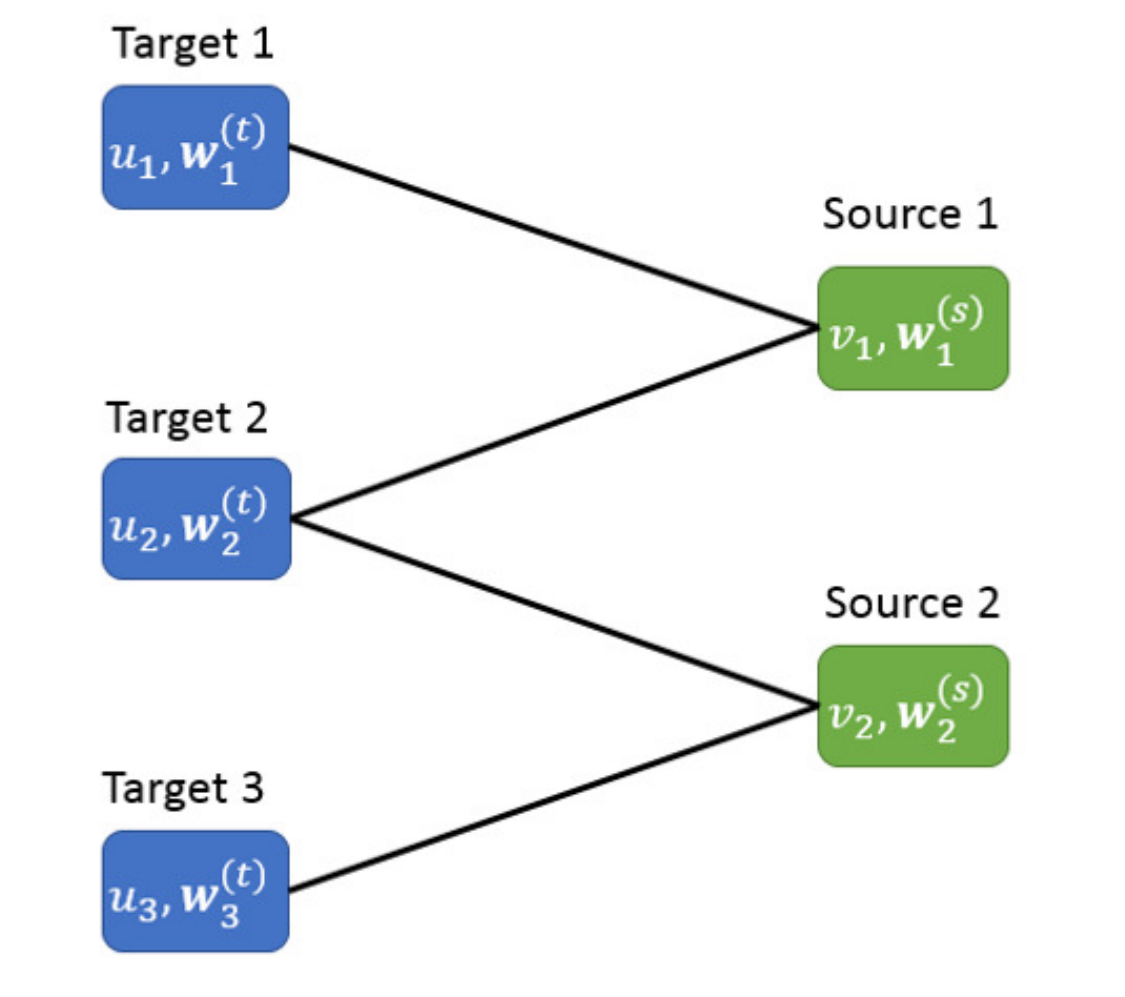}}
\subfigure{\includegraphics[width=0.223\textwidth]{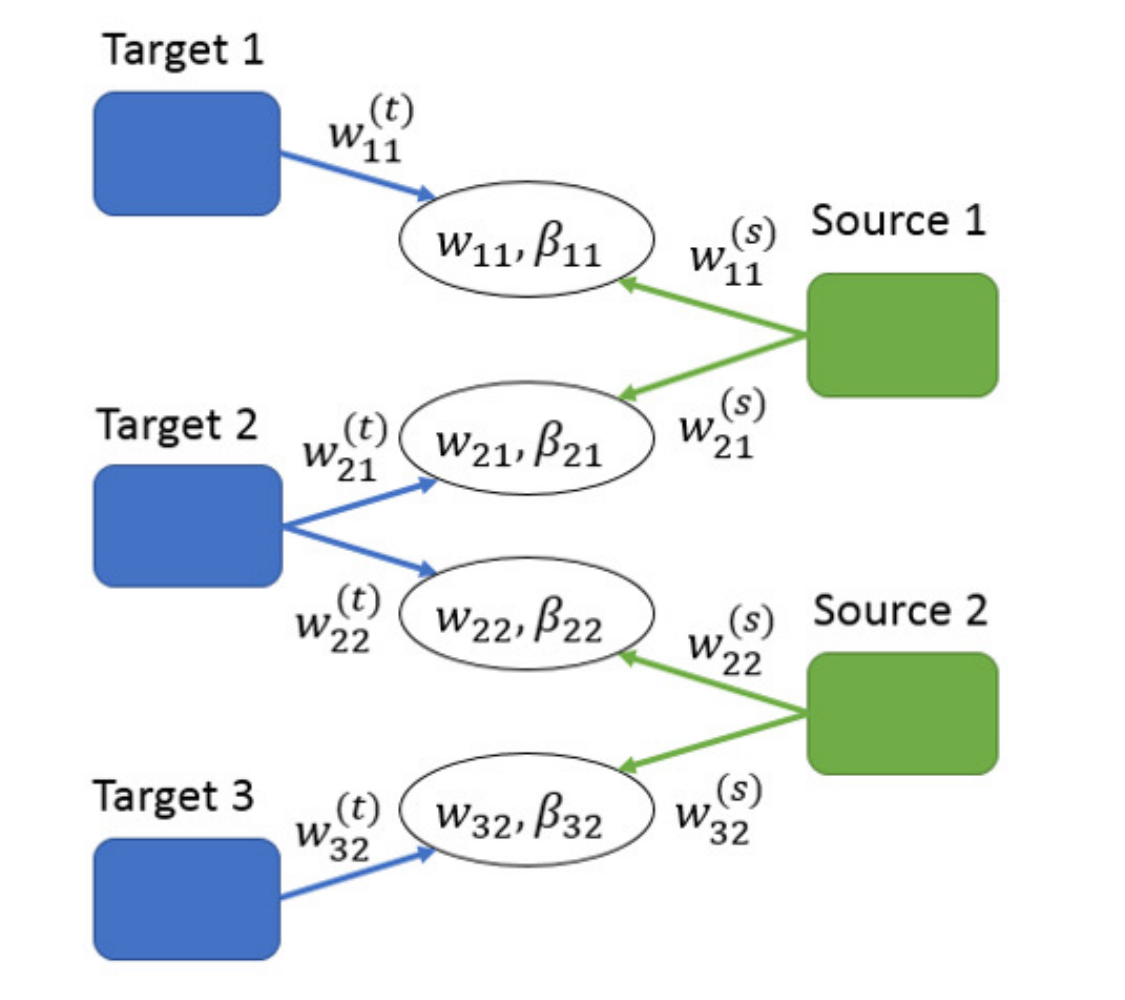}}
\caption{{Illustration of the distributed dual algorithm. At every iteration, each target $x$ or source $y$ achieves $\{u_x,\mathbf{w}_{x}^{(t)}\}$ or $\{v_y,\mathbf{w}_{y}^{(s)}\}$ with (\ref{eq:OTReDualCon2Sol2UW}) or (\ref{eq:OTReDualCon2Sol2VW}), respectively. Then, each pair of target $x$ and source $y$ computes $w_{xy}$ and $\beta_{xy}$ with (\ref{eq:OTReDualCon2Sol2Beta}) and (\ref{eq:OTReDualCon2Sol2Alpha}), respectively. The iterations continue until convergence.}}
\label{fig:Alg4}
\end{figure}

Iterations (\ref{eq:OTReDualCon2Sol2UW})-(\ref{eq:OTReDualCon2Sol2Alpha}) can be used to solve problem (\ref{eq:OTReDualCon}), and they are fully distributed operations. The complexity of the dual algorithm in Proposition \ref{pro:OTRe2} is similar to the complexity of the primal algorithm in Proposition \ref{pro:OTRePrimal2}. Iterations (\ref{eq:OTReDualCon2Sol2UW}) and (\ref{eq:OTReDualCon2Sol2VW}) are individual operations which are quadratic programming with $O(n^3)$ complexity; iterations (\ref{eq:OTReDualCon2Sol2Beta}) and (\ref{eq:OTReDualCon2Sol2Alpha}) are pair operations which are matrix calculations with $O(1)$ complexity. The complexity of the whole algorithm is further affected by the number of participants and connections between targets and sources. The dual algorithm also has similar advantages of efficiency and privacy as the primal algorithm since each participant solves its own problem and their personal information $\{p_x, \mathcal{Y}_x\}$ or $\{q_y, \mathcal{X}_y\}$ is kept private to themselves. However, the information communicated in the dual algorithm is $w_{xy}^{(t)}$, $w_{xy}^{(s)}$, and $w_{xy}$, rather than $\pi_{xy}^{(t)}$, $\pi_{xy}^{(s)}$, and $\pi_{xy}$ in the primal algorithm, which indicates that the participants bargain on the amounts and prices of resources in the primal algorithm and the dual algorithm, respectively. We can also extend the dual algorithm to an online version to address real-time applications following similar methods used in the previous section. The convergence of the dual algorithm is guaranteed from Lemma \ref{lem:ADMoMConvergence} in Appendix A as the objective function $\sum_{x\in\mathcal{X}} u_x p_x + \sum_{y\in\mathcal{Y}} v_y q_y$ is linear. 

The distributed dual algorithm captures the bargaining between sources and targets on the prices ${\mathbf{w}_\mathcal{G}}$ of transferred resources. Target $x$ offers a payment of $w_{xy}^{(t)}$ and source $y$ requests a payment of $w_{xy}^{(s)}$ to transfer resources from $y$ to $x$. The consensus constraints in problem (\ref{eq:OTReDualCon}) capture the agreement between $x$ and $y$ on the prices as $w_{xy}^{(t)} = w_{xy} = w_{xy}^{(s)}$. We can see from (\ref{eq:OTReDualCon2Sol2Beta}) that the agreement is achieved by following an averaging principle that $w_{xy}$ is the average between $w_{xy}^{(t)}$ and $w_{xy}^{(s)}$. Each $x$ or $y$ modifies its proposal $w_{xy}^{(t)}$ or $w_{xy}^{(s)}$ to maximize its own surplus and minimize the differences from its previous negotiations $w_{xy}$ at the same time. $\beta_{xy}$ captures the cumulative differences between $x$ and $y$ on the payments, and it further identifies the directions for $x$ and $y$ to modify their proposals. $\widehat{\eta}$ controls each participant's trade-off between a high surplus of himself and a small difference from others. 

The primal algorithm and the dual algorithm share a lot similarities in terms of their structures and both of them indicates an averaging principle on the bargaining between sources and targets. We further prove the primal-dual relations between their iterations. We shall see that $\alpha_{xy} = w_{xy}$, $\pi_{xy} = \beta_{xy}$, and $\eta = 1/\widehat{\eta}$ where $\alpha_{xy}$, $\pi_{xy}$, and $\eta$ come from the primal algorithm and $w_{xy}$, $\beta_{xy}$, and $\widehat{\eta}$ come from the dual algorithm.

\subsection{Relations Between The Primal Algorithm And The Dual Algorithm}
In this subsection, we prove the primal-dual relations between the iterations of the primal algorithm in Proposition \ref{pro:OTRePrimal2} and the iterations of the dual algorithm in Proposition \ref{pro:OTRe2}.
\begin{proposition}
\label{pro:OTReDualSolDual}
Let {$\lambda_{xy}^{(t)}\in\mathbb{R}$ and $\lambda_{xy}^{(s)}\in\mathbb{R}$} denote the Lagrange multipliers for (\ref{eq:OTReDualCon2Sol2UW}) and (\ref{eq:OTReDualCon2Sol2VW}), respectively. Iterations (\ref{eq:OTReDualCon2Sol2UW})-(\ref{eq:OTReDualCon2Sol2Alpha}) can be rewritten as
\begin{equation}
\label{eq:OTReDualSolDualUW}
\begin{array}{l}
{\Lambda_{x}^{(t)}}(k+1)  \in \arg\min\limits_{{\Lambda_{x}^{(t)}} \in \mathcal{U}_{x}} - \sum\limits_{y\in \mathcal{Y}}  \lambda_{xy}^{(t)} \gamma_{xy} \\ \ \ \ \ \ \ \ \ \ \  + \sum\limits_{y\in\mathcal{Y}} w_{xy}(k)\lambda_{xy}^{(t)}  + \frac{1}{2\widehat{\eta}} \sum\limits_{y\in\mathcal{Y}}  (\lambda_{xy}^{(t)} - \beta_{xy}(k))^2 ,
\end{array}
\end{equation}
\begin{equation}
\label{eq:OTReDualSolDualVW}
\begin{array}{l}
{\Lambda_{y}^{(s)}}(k+1) \in \arg\min\limits_{{\Lambda_{y}^{(s)}} \in \mathcal{U}_{y}} - \sum\limits_{x\in \mathcal{X}}\lambda_{xy}^{(s)} \delta_{xy} \\ \ \ \ \ \ \ \ \ \ \ - \sum\limits_{x\in\mathcal{X}}  w_{xy}(k)\lambda_{xy}^{(s)} + \frac{1}{2\widehat{\eta}} \sum\limits_{x\in\mathcal{X}}  (\beta_{xy}(k)-\lambda_{xy}^{(s)} )^2 ,
\end{array}
\end{equation}
\begin{equation}
\label{eq:OTReDualSolDualBeta}
\begin{array}{l}
\beta_{xy}(k+1)   =\frac{1}{2} (\lambda_{xy}^{(t)}(k+1) + \lambda_{xy}^{(s)}(k+1)) ,
\end{array}
\end{equation}
\begin{equation}
\label{eq:OTReDualSolDualAlpha}
w_{xy}(k+1) = w_{xy}(k) + \frac{1}{2\widehat{\eta}} (\lambda_{xy}^{(t)}(k+1) - \lambda_{xy}^{(s)}(k+1)),
\end{equation}
where $\mathcal{U}_x:=\{ {\Lambda_{x}^{(t)}}| \lambda_{xy}^{(t)} \geq 0, y\in\mathcal{Y}; \sum_{y\in \mathcal{Y}}  \lambda_{xy}^{(t)}  = p_x \}$, $\mathcal{U}_y:=\{{\Lambda_{y}^{(s)}} |\lambda_{xy}^{(s)} \geq 0, x\in\mathcal{X}; \sum_{x\in \mathcal{X}}  \lambda_{xy}^{(s)} = q_y \}$.
\end{proposition}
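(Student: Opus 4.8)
The plan is to recognize each of the two ADMM subproblems in Proposition \ref{pro:OTRe2} as a convex quadratic program with affine inequality constraints, compute its Lagrangian dual in closed form, and check that this dual coincides term-by-term with the corresponding subproblem (\ref{eq:OTReDualSolDualUW})--(\ref{eq:OTReDualSolDualVW}); the two pair updates (\ref{eq:OTReDualSolDualBeta})--(\ref{eq:OTReDualSolDualAlpha}) then fall out by substituting the stationarity relations produced along the way. I would treat the target update and the source update separately and then the two pair updates.

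First, for the target update (\ref{eq:OTReDualCon2Sol2UW}): it minimizes $u_x p_x + \sum_{y}\beta_{xy}(k)w_{xy}^{(t)} + \frac{\widehat{\eta}}{2}\sum_{y}(w_{xy}^{(t)}-w_{xy}(k))^2$ over free $u_x$ and $\mathbf{w}_x^{(t)}$ subject to $\gamma_{xy}-u_x-w_{xy}^{(t)}\le 0$, $y\in\mathcal{Y}$. Since this is strictly convex in $\mathbf{w}_x^{(t)}$ with nonempty feasible set, strong duality holds, so the optimal primal $w$-variables are recoverable from the optimal multipliers. Attaching $\lambda_{xy}^{(t)}\ge 0$ to the constraints and minimizing the Lagrangian, stationarity in the unconstrained $u_x$ forces $\sum_{y}\lambda_{xy}^{(t)}=p_x$ (this is exactly the equality defining $\mathcal{U}_x$ in the rewritten form), while stationarity in $w_{xy}^{(t)}$ yields $w_{xy}^{(t)}(k+1)=w_{xy}(k)+\frac{1}{\widehat{\eta}}(\lambda_{xy}^{(t)}(k+1)-\beta_{xy}(k))$. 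Substituting this back into the Lagrangian, discarding the additive constant $\sum_{y}\beta_{xy}(k)w_{xy}(k)$, and negating the resulting maximization gives precisely the objective of (\ref{eq:OTReDualSolDualUW}). The source update (\ref{eq:OTReDualCon2Sol2VW}) is identical up to the opposite sign on the consensus constraint and on $\beta_{xy}$ in the penalty, and it yields $\sum_{x}\lambda_{xy}^{(s)}=q_y$, the relation $w_{xy}^{(s)}(k+1)=w_{xy}(k)-\frac{1}{\widehat{\eta}}(\lambda_{xy}^{(s)}(k+1)-\beta_{xy}(k))$, and the objective of (\ref{eq:OTReDualSolDualVW}).

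Finally, I would substitute the two closed forms for $w_{xy}^{(t)}(k+1)$ and $w_{xy}^{(s)}(k+1)$ into the pair updates of Proposition \ref{pro:OTRe2}. The averaging step (\ref{eq:OTReDualCon2Sol2Beta}) then collapses to $w_{xy}(k+1)=w_{xy}(k)+\frac{1}{2\widehat{\eta}}(\lambda_{xy}^{(t)}(k+1)-\lambda_{xy}^{(s)}(k+1))$, which is (\ref{eq:OTReDualSolDualAlpha}); and using $w_{xy}^{(t)}(k+1)-w_{xy}^{(s)}(k+1)=\frac{1}{\widehat{\eta}}(\lambda_{xy}^{(t)}(k+1)+\lambda_{xy}^{(s)}(k+1)-2\beta_{xy}(k))$ in the multiplier step (\ref{eq:OTReDualCon2Sol2Alpha}) collapses it to $\beta_{xy}(k+1)=\frac{1}{2}(\lambda_{xy}^{(t)}(k+1)+\lambda_{xy}^{(s)}(k+1))$, which is (\ref{eq:OTReDualSolDualBeta}). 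Assembling the pieces reproduces (\ref{eq:OTReDualSolDualUW})--(\ref{eq:OTReDualSolDualAlpha}), with the feasible sets $\mathcal{U}_x,\mathcal{U}_y$ as stated.

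I expect the main obstacle to be bookkeeping rather than anything conceptual: keeping the signs consistent between the target and source subproblems (the consensus constraint and the $\beta_{xy}$ penalty enter with opposite signs on the two sides), and confirming that the additive constants discarded on passing to the dual genuinely do not change the argmin, so that the optimal multipliers $\lambda_{xy}^{(t)}(k+1),\lambda_{xy}^{(s)}(k+1)$ of the ADMM subproblems are exactly the minimizers of (\ref{eq:OTReDualSolDualUW})--(\ref{eq:OTReDualSolDualVW}). I would also state explicitly that strong duality — which is what lets us recover the primal $w$-variables from the optimal $\lambda$-variables in the last step — relies on nonemptiness of the subproblem feasible sets, and that strict convexity of the quadratic penalty makes each $\lambda$-update well defined.
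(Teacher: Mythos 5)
Your proposal is correct and follows essentially the same route as the paper's Appendix E: dualize each ADMM subproblem via the stationarity (KKT) conditions in $u_x$ and $w_{xy}^{(t)}$ (resp. $v_y$ and $w_{xy}^{(s)}$), recover $w_{xy}^{(t)}(k+1)$ and $w_{xy}^{(s)}(k+1)$ in closed form from the multipliers, and substitute these into the two pair updates to obtain (\ref{eq:OTReDualSolDualBeta}) and (\ref{eq:OTReDualSolDualAlpha}). Your added remarks on strong duality and on the discarded constant $w_{xy}(k)\beta_{xy}(k)$ only make explicit what the paper leaves implicit.
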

\begin{proof}
See Appendix E.
\end{proof}
\begin{remark}
\label{rem:OTPD}
The iterations in Proposition \ref{pro:OTRePrimal2} and the iterations in Proposition \ref{pro:OTReDualSolDual} are identical with $\eta = {1}/{\widehat{\eta}}$, $\pi_{xy}^{(t)} = \lambda_{xy}^{(t)}$, $\pi_{xy}^{(s)} = \lambda_{xy}^{(t)}$, $\pi_{xy} = \beta_{xy}$, and $\alpha_{xy} = w_{xy}$.
\end{remark}
Thus, $\alpha_{xy}$ in the primal algorithm captures the prices that target $x$ pays to source $y$ as the compensation of transferring resources, i.e., $w_{xy}$ in the dual problem (\ref{eq:OTReDualCon}). $\beta_{xy}$ in the dual algorithm captures the amount of transferred resources from $y$ to $x$, i.e., $\pi_{xy}$ in the primal problem (\ref{eq:OTReRe}). Both the primal algorithm and the dual algorithm solve problems (\ref{eq:OTReRe}) and (\ref{eq:OTReDualCon}) at the same time with updates on allocations $\pi_{xy}$ and prices $w_{xy}$, and the convergence to the optimum is guaranteed.

\begin{figure}[]
\centering
\subfigure[$k=0$]{\includegraphics[width=0.235\textwidth]{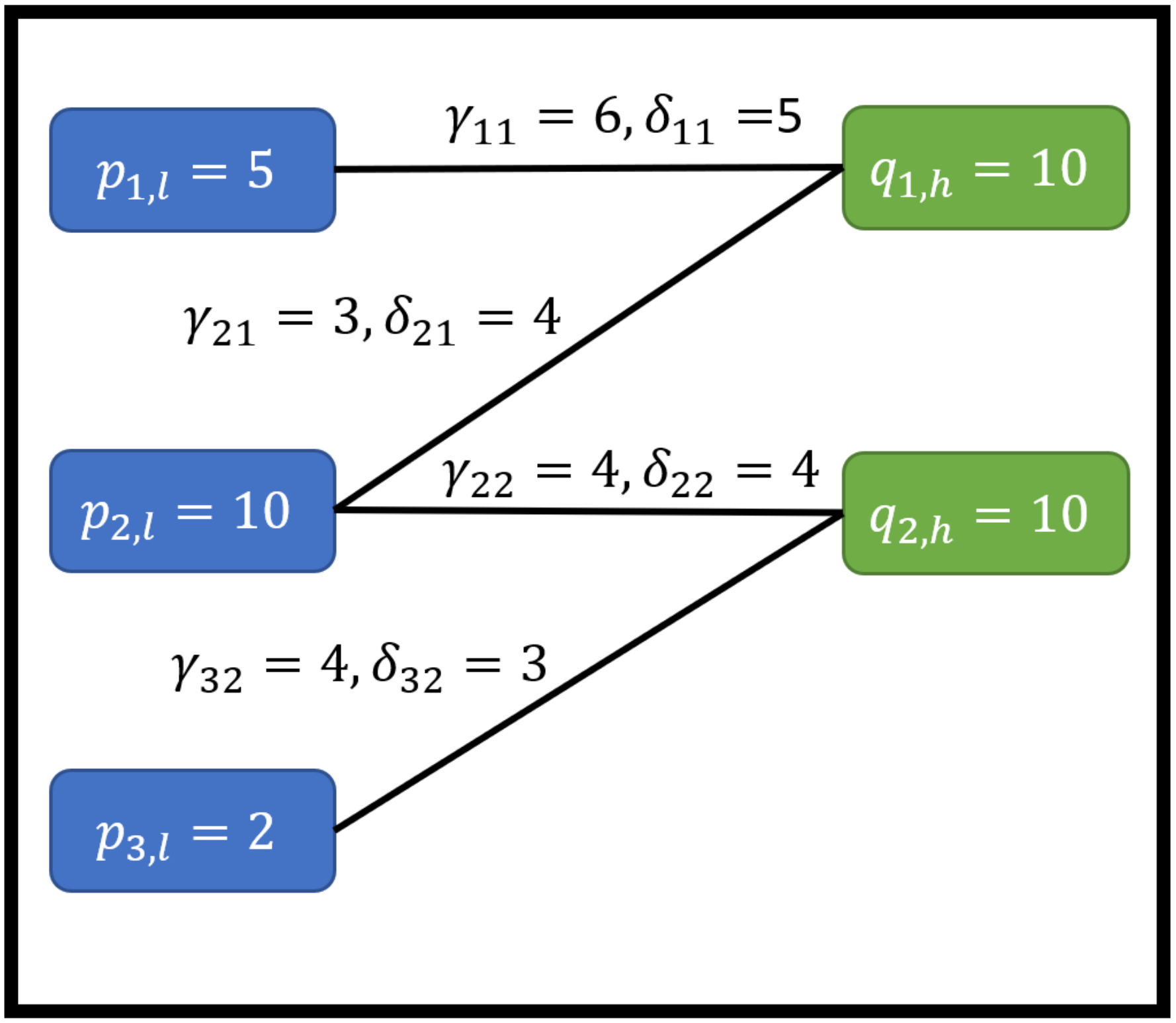}}
\subfigure[$k=400$]{\includegraphics[width=0.235\textwidth]{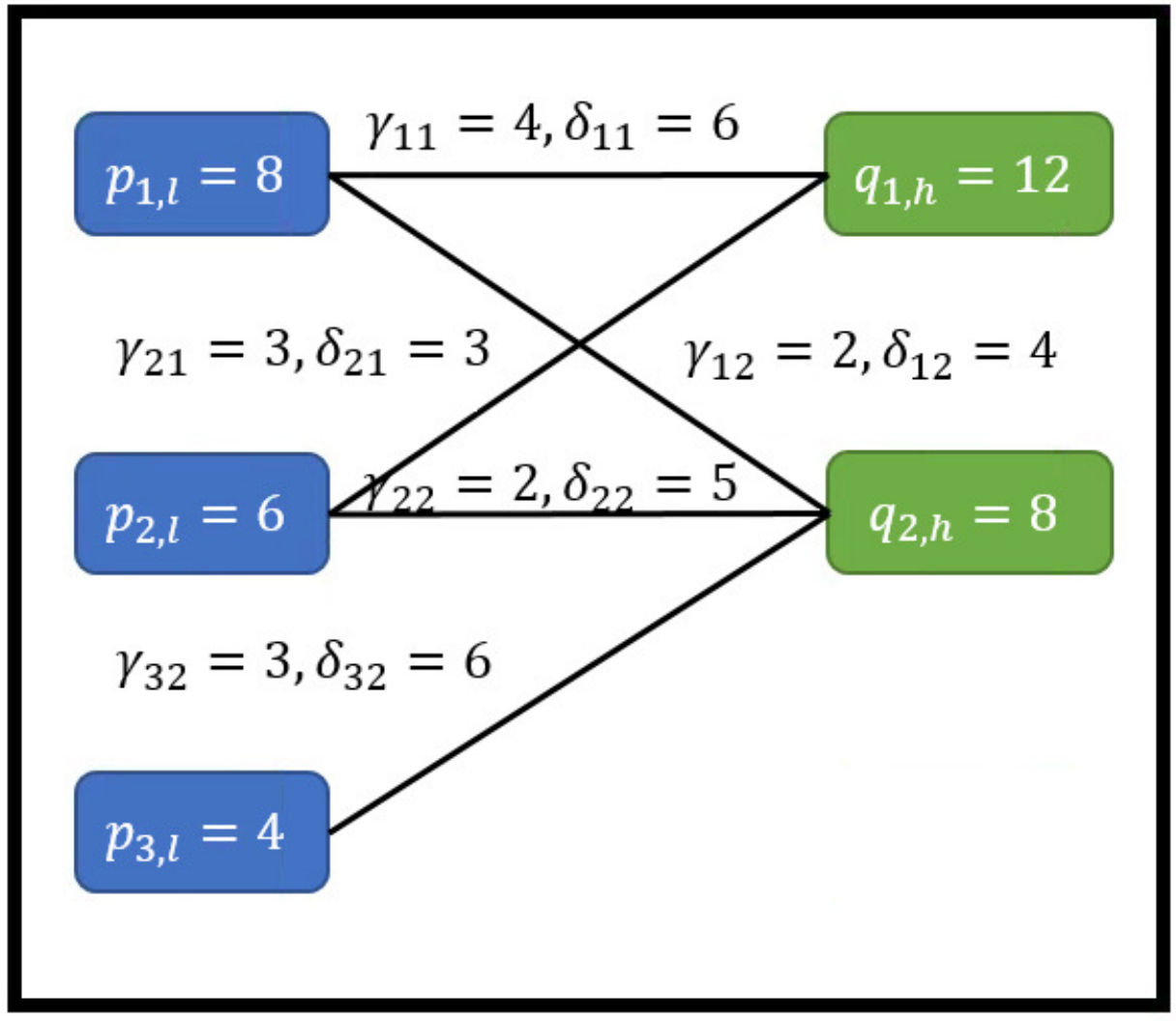}}
\subfigure[$k=800$]{\includegraphics[width=0.235\textwidth]{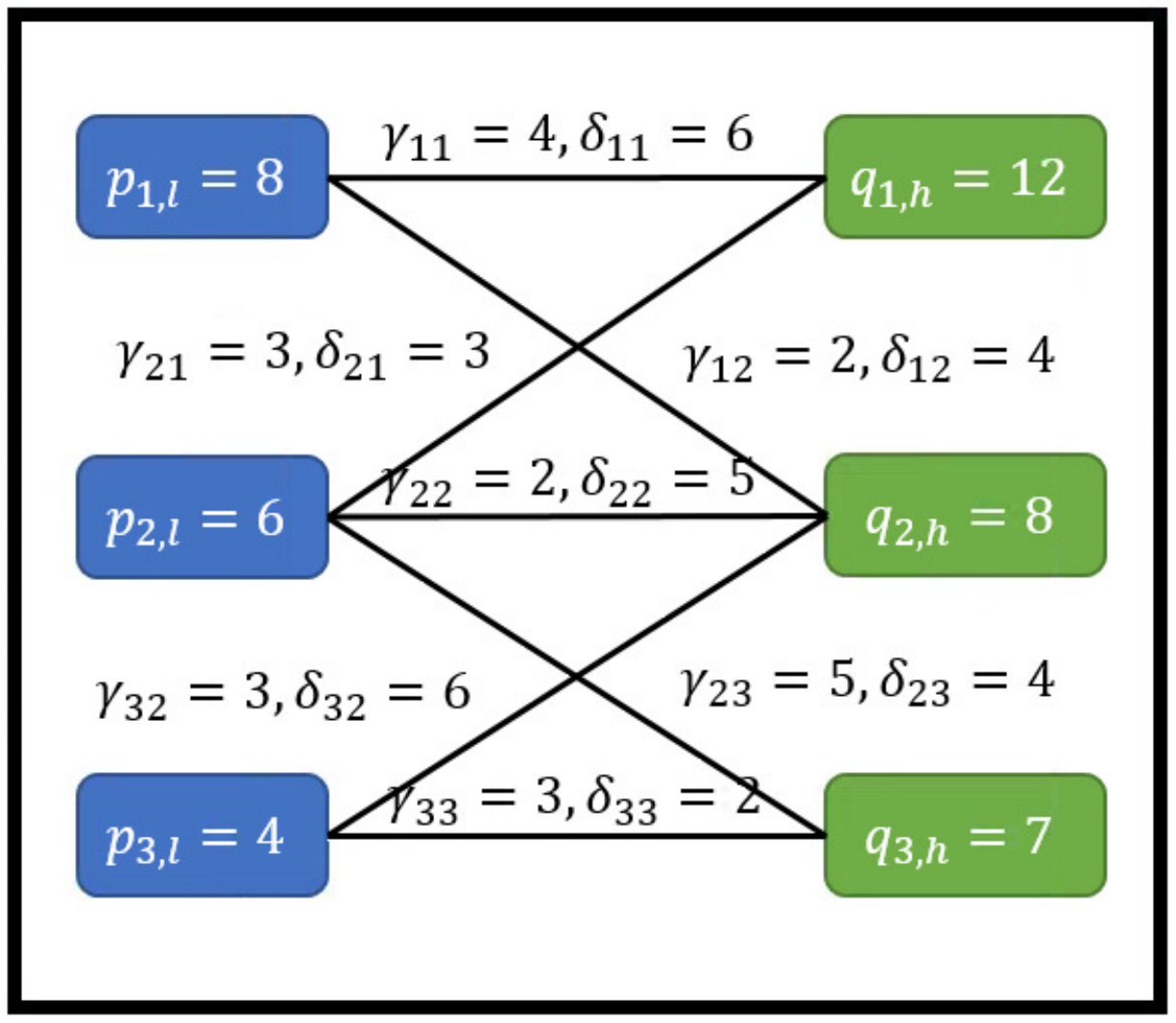}}
\subfigure[$k=1200$]{\includegraphics[width=0.235\textwidth]{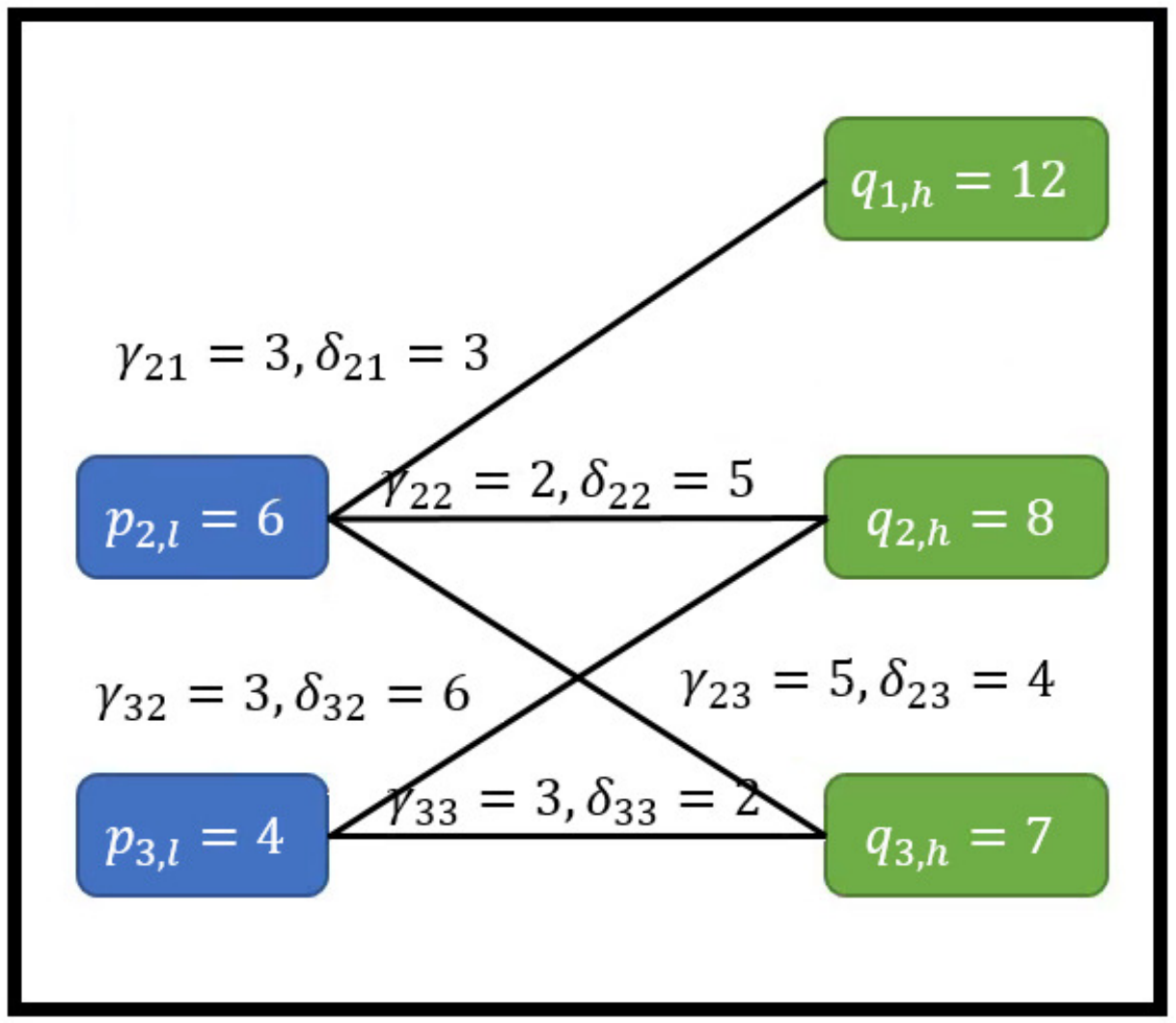}}
\caption{Online Discrete Optimal Transport Problems.  }
\label{fig:EX1Demo}
\end{figure}

\begin{figure}[]
\centering
\subfigure[$D(k) = \sum_{x\in\mathcal{X}}\sum_{y\in\mathcal{Y}_x}f_{xy}(\pi_{xy}(k)) + \sum_{y\in\mathcal{Y}}\sum_{x\in\mathcal{X}_y}g_{xy}(\pi_{xy}(k))$.]{\includegraphics[width=0.49\textwidth]{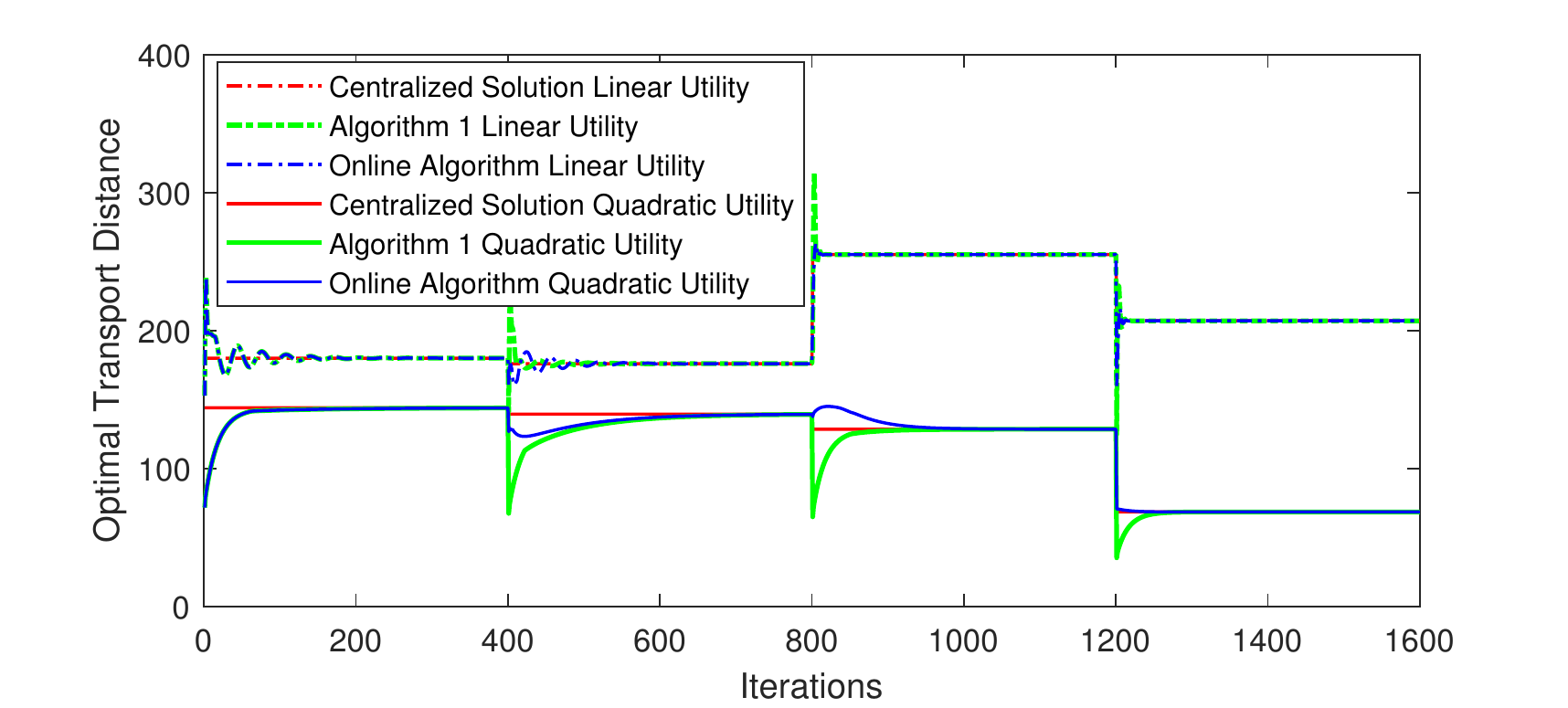}}
\subfigure[$\sqrt{(\text{D}(k) - \text{D}^c)^2}$.]{\includegraphics[width=0.49\textwidth]{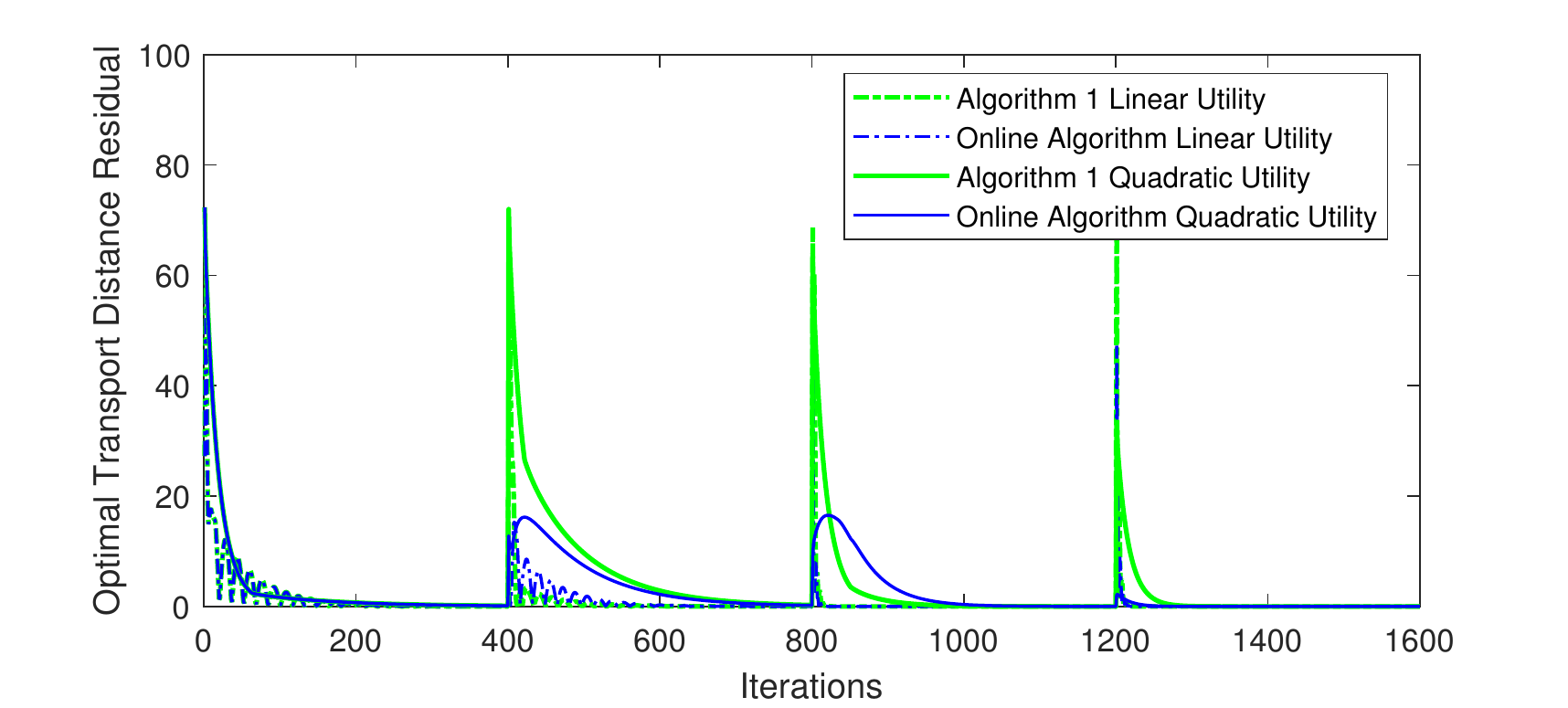}}
\subfigure[$\sqrt{\sum_{x\in\mathcal{X}}\sum_{y\in\mathcal{Y}_x} \left(\pi_{xy}(k) - \pi_{xy}^c \right)^2}$.]{\includegraphics[width=0.49\textwidth]{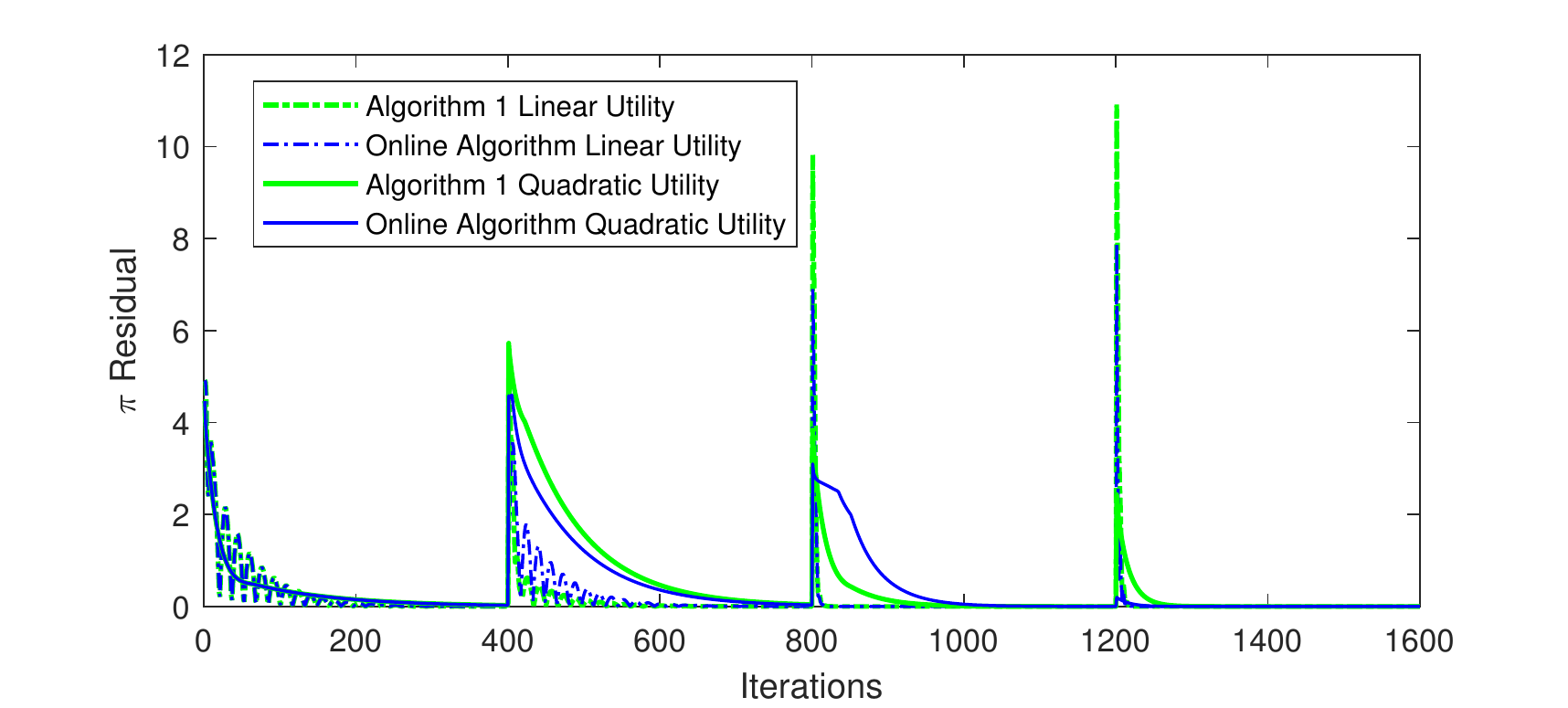}}
\caption{Evolution of Iterations (\ref{eq:MainReReSol2UWOnline})-(\ref{eq:MainReReSol2AlphaOnline}) for the online resource matching problem shown in Fig. \ref{fig:EX1Demo}. The centralized solution ${\Pi_{\mathcal{G}}^c}$ and the corresponding optimal value $D^c$ are achieved by directly solving problem (\ref{eq:Main}). The residuals measure the differences between the current updates and the centralized solution. }
\label{fig:EX1}
\end{figure}

\section{Numerical Experiments}
\label{sec:NE}
In this section, we present numerical experiments of distributed discrete optimal transport. We first verify the convergence of Algorithm 1 to the solution of problem (\ref{eq:Main}), and we also demonstrate the online adaptability of the online algorithm presented in Remark \ref{rem:MainReReOnline}. The experiment settings are described in Fig. \ref{fig:EX1Demo}. Blue rectangles represent targets while green rectangles represent sources. We let all $p_{x, h} = 100$ which indicates that targets have high upper bounds on acquiring resources, and all $q_{y, l} = 0$ which indicates that sources can choose not to allocate any resources. We consider both linear and quadratic utility functions with $\gamma_{xy}$ and $\delta_{xy}$ denoting the parameters, i.e, $f_{xy}^l(\pi_{xy}^{(t)}) = C_{xy}^{(t)} + \gamma_{xy}\pi_{xy}^{(t)}$, $g_{xy}^l(\pi_{xy}^{(t)}) = C_{xy}^{(s)} + \delta_{xy}\pi_{xy}^{(s)}$, $f_{xy}^q(\pi_{xy}^{(t)}) = C_{xy}^{(t)} - \gamma_{xy}(\pi_{xy}^{(t)})^2$, and $g_{xy}^q(\pi_{xy}^{(s)}) = C_{xy}^{(s)} - \delta_{xy}(\pi_{xy}^{(s)})^2$. $ C_{xy}^{(t)}$ and $C_{xy}^{(s)}$ are constant to $\pi_{xy}^{(t)}$ and $\pi_{xy}^{(s)}$. When $k=0$, the resource matching market has 3 targets and 2 sources. When $k=400$, Target 1 and Target 3 increase their demands while Target 2 decreases its demand; Source 1 increases its supplies while Source 2 decreases its supplies; Target 1 builds connection to Source 2; some participants also have different utility parameters $\gamma_{xy}$ or $\delta_{xy}$. When $k=800$, Source 3 joins the market and provides resources to Target 2 and Target 3. When $k=1200$, Target 1 quits and the market now has two targets and three sources. All four cases are feasible which can be easily verified. Algorithm 1 solves each case separately, while the online algorithm solves all cases continuously. The results of them are shown in Fig. \ref{fig:EX1}. We can see from Fig. \ref{fig:EX1} that both Algorithm 1 and the online algorithm converge to the centralized optimal solutions ${\Pi_{\mathcal{G}}^c}$ and the centralized optimal values $D^c$ in all cases. Since we do not need to restart the online algorithm when the problem changes, the online algorithm is suitable for online resource matching problems. 

\begin{figure}[http]
\centering
\subfigure[$\text{D}(k) = \sum_{x\in\mathcal{X}}\sum_{y\in\mathcal{Y}}\pi_{xy}(k)\gamma_{xy} + \sum_{y\in\mathcal{Y}}\sum_{x\in\mathcal{X}}\pi_{xy}(k)\delta_{xy}$.]{\includegraphics[width=0.49\textwidth]{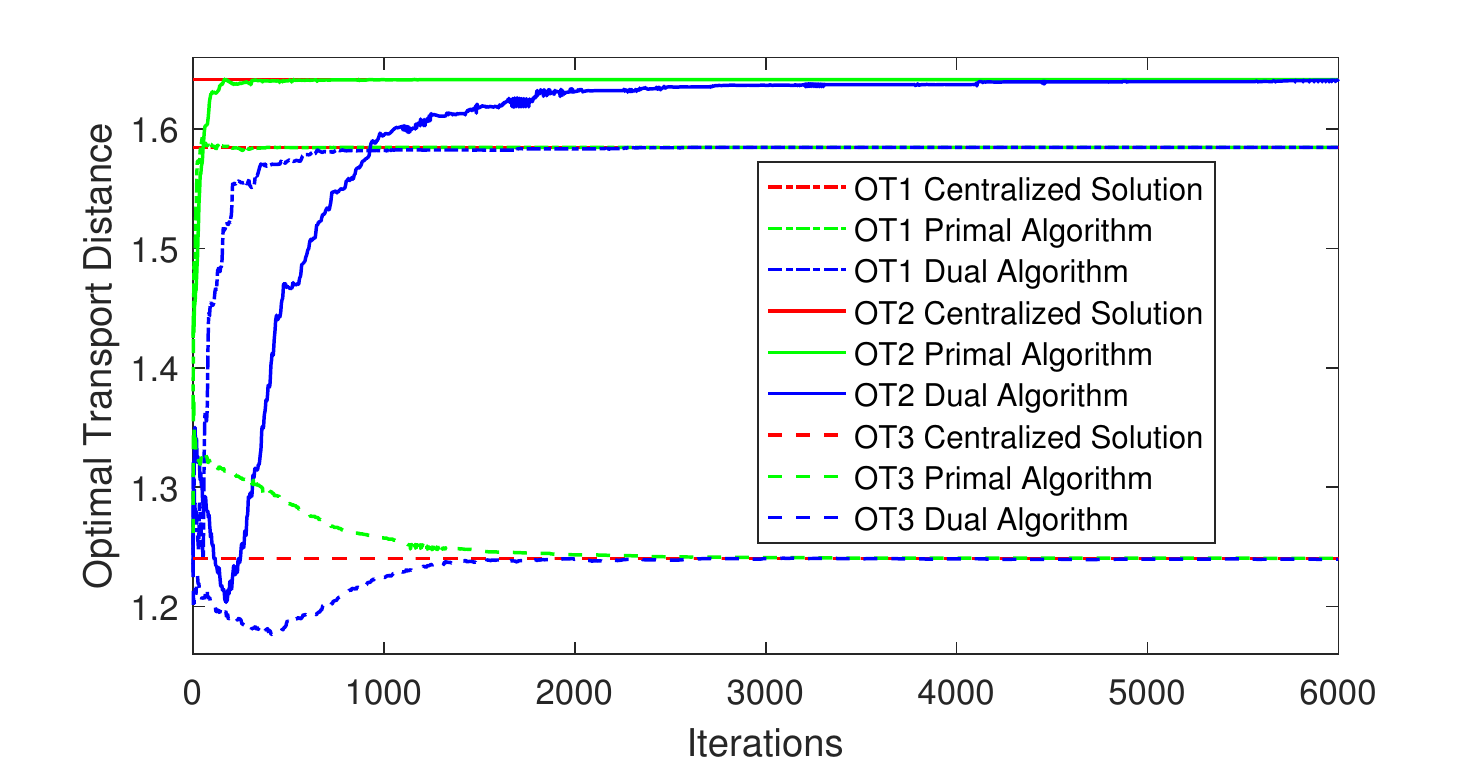}}
\subfigure[$\sqrt{(\text{D}(k) - \text{D}^c)^2}$.]{\includegraphics[width=0.49\textwidth]{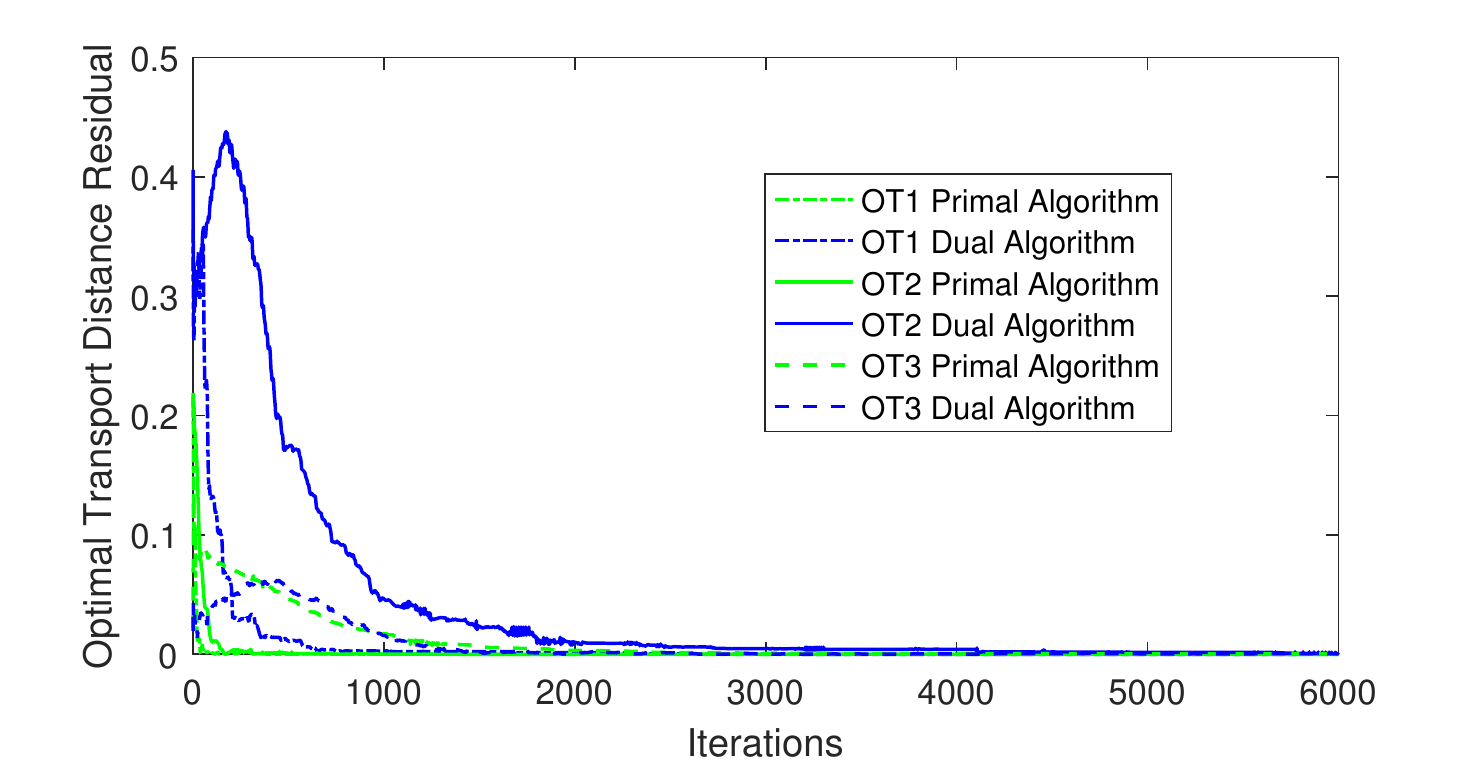}}
\subfigure[$\sqrt{\sum_{x\in\mathcal{X}}\sum_{y\in\mathcal{Y}} \left(\pi_{xy}(k) - \pi_{xy}^c \right)^2}$.]{\includegraphics[width=0.49\textwidth]{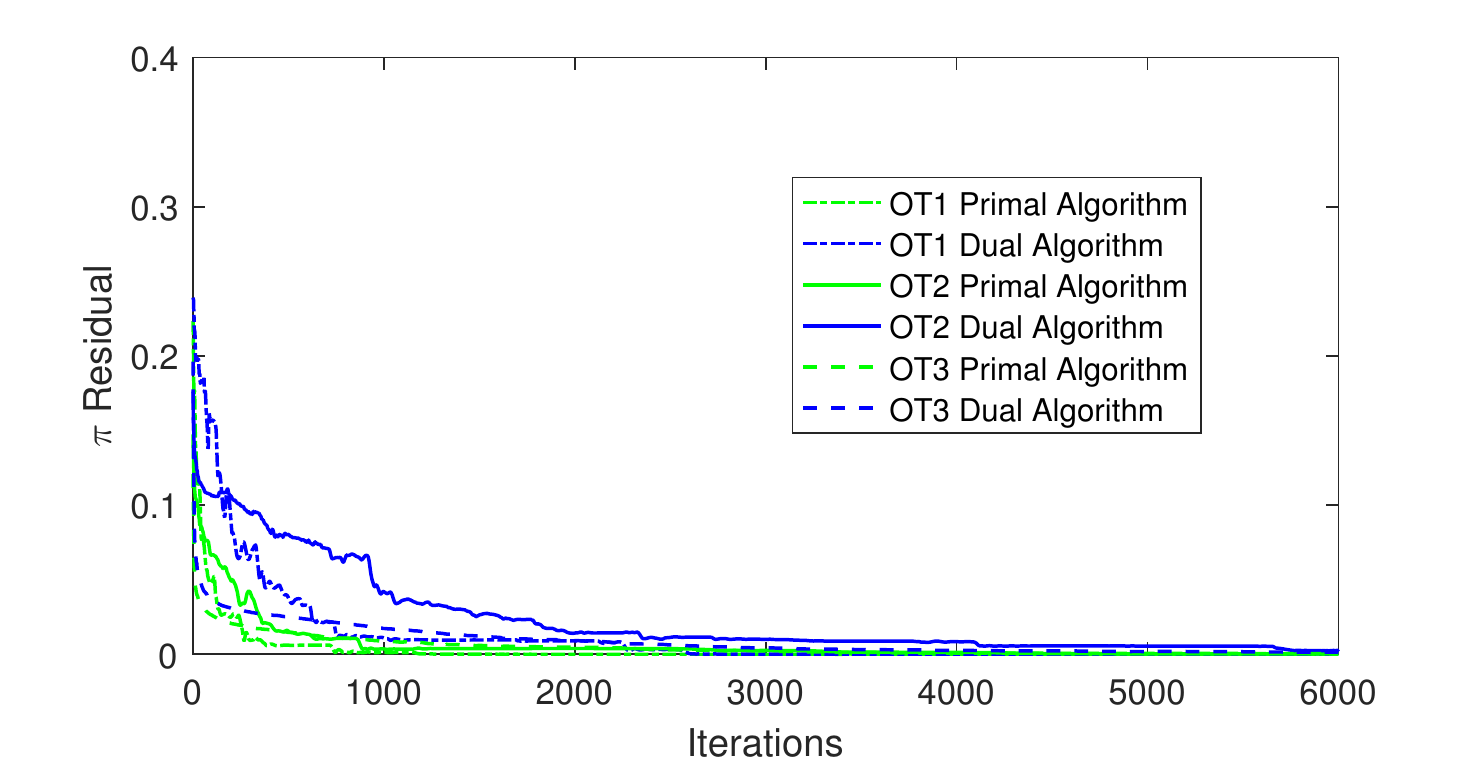}}
\caption{Evolution of the primal algorithm and the dual algorithm. The centralized solution is achieved by directly solving problem (\ref{eq:OptimalTransport}). The residuals measure the differences between the current updates and the centralized solution. }
\label{fig:EX0}
\end{figure}

We then show that the primal algorithm in Proposition \ref{pro:OTRePrimal2} and the dual algorithm in Proposition \ref{pro:OTRe2} can be used to solve discrete optimal transport problems (\ref{eq:OptimalTransport}) with linear utility functions, and the convergence to the solutions is guaranteed. Let $p_x$ and $q_y$ be generated by i.i.d. uniform distributions on $(0,1)$. We then normalize $\{p_x\}$ and $\{q_y\}$ to $1$ so that $\sum_{x\in\mathcal{X}} p_x = 1 = \sum_{y\in\mathcal{Y}} q_y$ as required in Condition \ref{con:exisitanceOT} which guarantees the existence of a solution. $\gamma_{xy}$ and $\delta_{xy}$ are also generated by i.i.d. uniform distributions on $(0,1)$. We consider three discrete optimal transport problems `OT1', `OT2', and `OT3' with different numbers of sources and targets under this setting. `OT1' has 20 targets and 20 sources; `OT2' has 20 targets and 100 sources; `OT3' has 1000 targets and 2 sources. The results are shown in Figure \ref{fig:EX0}. We can see that both the primal algorithm and the dual algorithm converge to the centralized solution ${\Pi_{\mathcal{G}}^c}$ and the centralized optimal value $D^c$ of problem (\ref{eq:OptimalTransport}).

\section{Conclusion}
\label{sec:CON}
In this paper, we have studied a decentralized discrete optimal transport problem using a consensus-based approach and have derived distributed algorithms with ADMM to address resource matching between multiple sources and multiple targets. We have shown that our algorithms are fully distributed and do not require central planners to compute the optimal matching or store personal information of sources and targets, which provides guaranteed levels of efficiency and privacy at the same time. We have developed distributed primal and dual algorithms by leveraging the primal and dual problems of discrete optimal transport with linear utility functions and further proved the equivalence between them. We have demonstrated with numerical experiments the convergence of our algorithms to centralized solutions, the online adaptability of our algorithms for real-time applications, and the equivalence between the primal algorithm and the dual algorithm for cases with linear utility functions. We have shown that our algorithms offer useful insights of resource matching in market environments as the interactions between sources and targets reflect the bargaining between them on the amount and price of transferred resources. Furthermore, our algorithms have revealed an averaging principle that sources and targets follow during their bargaining process, which can be used to regulate resource markets and improve resource efficiency. One future direction is to deploy our algorithms to various applications and explore other decentralization methods. {Another future direction is to investigate private resource matching through differential privacy. }

{
\section*{Appendix A: Alternating Direction Method of Multipliers}
Alternating Direction Method of Multipliers (ADMM) is an algorithm for solving problems in the following form \cite{boyd2011distributed}:
\begin{equation}
\label{eq:ADMoM}
\begin{array}{l}
\min\limits_{\mathbf{x}\in \mathcal{U}_\mathbf{x},\mathbf{y}\in\mathcal{U}_\mathbf{y}} f(\mathbf{x}) + g(\mathbf{y})\\
{\rm{s}}{\rm{.t}}{\rm{.  \ \  }}A\mathbf{x} = \mathbf{y}.
\end{array}
\end{equation}
where $f:\mathbb{R}^m \rightarrow \mathbb{R}\cup\{+\infty\}$ and $g:\mathbb{R}^n \rightarrow \mathbb{R}\cup\{+\infty\}$ are convex functions, $A\in \mathbb{R}^{n\times m}$ is a linear operator. The optimal value is denoted by
\[{p^*} = \inf\limits_{\mathbf{x}\in \mathcal{U}_\mathbf{x},\mathbf{y}\in\mathcal{U}_\mathbf{y}} \{ f(\mathbf{x}) + g(\mathbf{y})\left| {A\mathbf{x} = \mathbf{y}} \right.\}. \]
With $\alpha \in \mathbb{R}^n$ donating the Lagrange multiplier corresponding to the constraint $A\mathbf{x} = \mathbf{y}$, the augmented Lagrangian is 
\begin{equation}
\label{eq:ADMoMLagrange}
\mathcal{L}(\mathbf{x}, \mathbf{y},\alpha ) = f(\mathbf{x}) + g(\mathbf{y}) + {\alpha ^T}(A\mathbf{x} - \mathbf{y}) + \frac{\eta }{2}\left\| {A\mathbf{x} - \mathbf{y}} \right\|_2^2,
\end{equation}
where the parameter $\eta>0$ controls the impact of the constraint violation in (\ref{eq:ADMoM}). The ADMM iterations are given by
\begin{equation}
\label{eq:ADMoMSolX}
\mathbf{x}(k + 1) \in \arg {\min _{\mathbf{x}\in \mathcal{U}_\mathbf{x}}} \mathcal{L}(\mathbf{x},\mathbf{y}(k),\alpha (k));
\end{equation} 
\begin{equation}
\label{eq:ADMoMSolY}
\mathbf{y}(k + 1) \in \arg {\min _{\mathbf{y}\in\mathcal{U}_\mathbf{y}}}\mathcal{L}(\mathbf{x}(k + 1),\mathbf{y},\alpha (k));
\end{equation}
\begin{equation}
\label{eq:ADMoMSolAlpha}
\alpha (k + 1) = \alpha (k) + \eta (A\mathbf{x}(k + 1) -\mathbf{y}(k+1)).
\end{equation}
Furthermore, we have the following lemma regarding the convergence of ADMM from Section 3.2 in \cite{boyd2011distributed}. 
\begin{lemma}
\label{lem:ADMoMConvergence}
Under the following assumptions:
\begin{itemize}
\item[(i)] $f:\mathbb{R}^m \rightarrow \mathbb{R}\cup\{+\infty\}$ and $g:\mathbb{R}^n \rightarrow \mathbb{R}\cup\{+\infty\}$ are closed, proper, and convex;
\item[(ii)] The unaugmented Lagrangian $\mathcal{L}_0(\mathbf{x},\mathbf{y},\alpha ) = f(\mathbf{x}) + g(\mathbf{y}) + {\alpha ^T }(A\mathbf{x}- \mathbf{y})$ has a saddle point, i.e., there exist $(\mathbf{x}^*,\mathbf{y}^*,\alpha^*)$, not necessarily unique, for which
$\mathcal{L}_0(\mathbf{x}^*,\mathbf{y}^*,\alpha) \le \mathcal{L}_0(\mathbf{x}^*,\mathbf{y}^*,\alpha^*) \le \mathcal{L}_0(\mathbf{x},\mathbf{y},\alpha^*)$ holds for all $\mathbf{x},\mathbf{y},\alpha$,
\end{itemize}
iterations (\ref{eq:ADMoMSolX})-(\ref{eq:ADMoMSolAlpha})  converge as follows:
\begin{itemize}
\item Residual convergence. $r(k)=  ||A\mathbf{x}(k) - \mathbf{y}(k) ||_2 \rightarrow  0$ as $k\rightarrow \infty$, i.e., the iterations approach feasibility.
\item Objective convergence. $f(\mathbf{x}(k))+g(\mathbf{y}(k)))\rightarrow p^*$ as $k \rightarrow \infty $, i.e., the objective function of the iterations approaches the optimal value.
\item Dual variable convergence. $\alpha(k)\rightarrow \alpha^*$ as $k\rightarrow \infty$, where $\alpha^*$ is a dual optimal point.
\end{itemize}
\end{lemma}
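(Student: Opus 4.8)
The plan is to prove the three convergence statements by exhibiting a Lyapunov (merit) function that decreases monotonically along the iterations and controls the residual. Fix any saddle point $(\mathbf{x}^*,\mathbf{y}^*,\alpha^*)$ guaranteed by assumption (ii); since $\mathcal{L}_0(\mathbf{x}^*,\mathbf{y}^*,\alpha)\le \mathcal{L}_0(\mathbf{x}^*,\mathbf{y}^*,\alpha^*)$ for all $\alpha$, we have $A\mathbf{x}^*=\mathbf{y}^*$ and $f(\mathbf{x}^*)+g(\mathbf{y}^*)=p^*$. Writing $\mathbf{r}(k+1):=A\mathbf{x}(k+1)-\mathbf{y}(k+1)$ so that $r(k+1)=\|\mathbf{r}(k+1)\|_2$, I would introduce $V(k):=\tfrac{1}{\eta}\|\alpha(k)-\alpha^*\|_2^2+\eta\|\mathbf{y}(k)-\mathbf{y}^*\|_2^2$ and aim for the descent estimate $V(k+1)\le V(k)-\eta\,r(k+1)^2-\eta\|\mathbf{y}(k+1)-\mathbf{y}(k)\|_2^2$, from which everything follows.

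First I would record the first-order optimality conditions of the two minimizations. From (\ref{eq:ADMoMSolX}), $0\in\partial f(\mathbf{x}(k+1))+A^{T}\alpha(k)+\eta A^{T}(A\mathbf{x}(k+1)-\mathbf{y}(k))$, which, after substituting the dual update (\ref{eq:ADMoMSolAlpha}), rearranges to $0\in\partial f(\mathbf{x}(k+1))+A^{T}\alpha(k+1)+\eta A^{T}(\mathbf{y}(k+1)-\mathbf{y}(k))$; from (\ref{eq:ADMoMSolY}) the condition collapses to $\alpha(k+1)\in\partial g(\mathbf{y}(k+1))$. Interpreting these as exact minimizers of the corresponding linearized objectives and invoking convexity of $f$ and $g$ at $\mathbf{x}^*,\mathbf{y}^*$, I would obtain an upper bound on $p(k+1)-p^*$, where $p(k+1):=f(\mathbf{x}(k+1))+g(\mathbf{y}(k+1))$. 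Separately, the saddle-point inequality $\mathcal{L}_0(\mathbf{x}^*,\mathbf{y}^*,\alpha^*)\le\mathcal{L}_0(\mathbf{x}(k+1),\mathbf{y}(k+1),\alpha^*)$ gives the reverse bound $p^*-p(k+1)\le (\alpha^*)^{T}\mathbf{r}(k+1)$. Adding the two bounds eliminates $p(k+1)-p^*$ and yields the master inequality $2(\alpha(k+1)-\alpha^*)^{T}\mathbf{r}(k+1)+2\eta(\mathbf{y}(k+1)-\mathbf{y}(k))^{T}(A\mathbf{x}(k+1)-\mathbf{y}^*)\le 0$.

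The descent estimate then follows by purely algebraic manipulation. Using $\alpha(k+1)-\alpha(k)=\eta\,\mathbf{r}(k+1)$ together with the identity $2b^{T}(b-a)=\|b\|^2-\|a\|^2+\|b-a\|^2$, I would rewrite the two inner products in the master inequality as $V(k+1)-V(k)$ plus square terms. The one cross term that does not telescope, $2\eta(\mathbf{y}(k+1)-\mathbf{y}(k))^{T}\mathbf{r}(k+1)$, is handled by monotonicity of $\partial g$: since $\alpha(k)\in\partial g(\mathbf{y}(k))$ and $\alpha(k+1)\in\partial g(\mathbf{y}(k+1))$, convexity gives $(\mathbf{y}(k+1)-\mathbf{y}(k))^{T}(\alpha(k+1)-\alpha(k))=\eta(\mathbf{y}(k+1)-\mathbf{y}(k))^{T}\mathbf{r}(k+1)\ge 0$, so dropping this nonnegative term preserves the inequality and produces exactly $V(k+1)\le V(k)-\eta\,r(k+1)^2-\eta\|\mathbf{y}(k+1)-\mathbf{y}(k)\|_2^2$. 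Because $V(k)\ge 0$ is non-increasing it converges, its increments are summable, and hence $r(k)\to 0$ and $\mathbf{y}(k+1)-\mathbf{y}(k)\to 0$ (residual convergence); boundedness of $V$ makes the iterates bounded, so the two opposing bounds on $p(k+1)-p^*$ both tend to $0$, giving $f(\mathbf{x}(k))+g(\mathbf{y}(k))\to p^*$ (objective convergence).

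The step I expect to be the real obstacle is the dual-variable convergence $\alpha(k)\to\alpha^*$, since the descent estimate only yields that $V$ converges — not that it converges to $0$ — and vanishing increments plus boundedness do not by themselves force convergence. My plan is the standard limit-point argument: boundedness of $\{(\mathbf{y}(k),\alpha(k))\}$ lets me extract a subsequence converging to some $(\bar{\mathbf{y}},\bar\alpha)$; because $r(k)\to 0$, $\mathbf{y}(k+1)-\mathbf{y}(k)\to 0$, and the subdifferential graphs of $f,g$ are closed, the limit satisfies the KKT/saddle conditions and is therefore itself a saddle point. Re-instantiating the \emph{same} Lyapunov function with $(\bar{\mathbf{y}},\bar\alpha)$ in place of $(\mathbf{y}^*,\alpha^*)$ — legitimate because the descent estimate holds for every saddle point — this merit function is monotone and vanishes along the subsequence, hence it vanishes along the whole sequence, giving $\alpha(k)\to\bar\alpha$ (and $\mathbf{y}(k)\to\bar{\mathbf{y}}$) with $\bar\alpha$ a dual optimal point. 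Care must be taken to justify boundedness of the iterates and closedness of $\partial f,\partial g$, which is precisely where the ``closed, proper, and convex'' hypotheses in assumption (i) are used.
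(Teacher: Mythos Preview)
The paper does not actually prove this lemma: it is stated in Appendix~A as a quotation of the standard ADMM convergence result, with the sentence ``we have the following lemma regarding the convergence of ADMM from Section~3.2 in \cite{boyd2011distributed}.'' So there is no in-paper proof to compare against; the authors simply defer to Boyd et~al.

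Your sketch is correct and is precisely the argument given in Appendix~A of Boyd et~al.\ (2011): the Lyapunov function $V(k)=\tfrac{1}{\eta}\|\alpha(k)-\alpha^*\|^2+\eta\|\mathbf{y}(k)-\mathbf{y}^*\|^2$, the descent estimate $V(k+1)\le V(k)-\eta\|\mathbf{r}(k+1)\|^2-\eta\|\mathbf{y}(k+1)-\mathbf{y}(k)\|^2$, and the limit-point trick for dual convergence are all exactly as in that reference. One small remark: your monotonicity step uses $\alpha(k)\in\partial g(\mathbf{y}(k))$, which is the optimality condition from the \emph{previous} $\mathbf{y}$-update and so holds only for $k\ge 1$; this is harmless for the asymptotic conclusions but worth flagging. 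Otherwise your plan is complete and matches the cited source.
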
}

{
\section*{Appendix B: Proof of Proposition \ref{pro:MainReRe}}
\label{sec:ADMM}
We first need to write problem (\ref{eq:MainReRe}) into the form of (\ref{eq:ADMoM}). Let us define $\mathbf{x} = [vec({\Pi_{\mathcal{G}}^{(t)}})^T,vec({\Pi_{\mathcal{G}}})^T]^T$, $\mathbf{y} = [vec({\Pi_{\mathcal{G}}})^T,vec({\Pi_{\mathcal{G}}^{(s)}})^T]^T$, and $\alpha = [vec(\{\alpha_{xy1}\})^T,vec(\{\alpha_{xy2}\})^T]^T$, where $vec$ indicates vectorization. Note that $\mathbf{x}$, $\mathbf{y}$, and $\alpha$ are all $2G\times 1$ column vectors, where $G$ is the number of connections between targets and sources, i.e., the size of $\mathcal{G}$. Thus, the consensus constraints can be rewritten into its matrix form as $A\mathbf{x}=\mathbf{y}$, where $A=[\mathbf{I}_{G,G},\mathbf{0}_{G,G};\mathbf{0}_{G,G},\mathbf{I}_{G,G}]$. Note that $\mathbf{0}_{G,G}$ and $\mathbf{I}_{G,G}$ denote a zero square matrix and an identity matrix of dimension $G$, respectively. Moreover, we have $\mathbf{x}\in\mathcal{U}_{\mathbf{x}}$ and $\mathbf{y}\in\mathcal{U}_{\mathbf{y}}$, where $\mathcal{U}_\mathbf{x}=\left\lbrace \mathbf{x}  | \pi_{xy}^{(t)} \geq 0,  p_{x, l} \leq \sum_{y\in \mathcal{Y}_x}  \pi_{xy}^{(t)}  \leq p_{x, h},  \{x, y\} \in \mathcal{G} \right\rbrace$ and $\mathcal{U}_\mathbf{y}=\left\lbrace \mathbf{y} |\pi_{xy}^{(s)} \geq 0, q_{y, l} \leq \sum_{x\in \mathcal{X}_y}  \pi_{xy}^{(s)} \leq q_{y, h},  \{x, y\} \in \mathcal{G} \right\rbrace$. As a result, we can use iterations (\ref{eq:ADMoMSolX})-(\ref{eq:ADMoMSolAlpha}) to solve problem (\ref{eq:MainReRe}). Since there is no coupling among ${\Pi_{x}^{(t)}}$, ${\Pi_{y}^{(s)}}$, $\pi_{xy}$, $\alpha_{xy1}$, and $\alpha_{xy2}$, we can decompose (\ref{eq:ADMoMSolX})-(\ref{eq:ADMoMSolAlpha}) as (\ref{eq:MainReReSolUW})-(\ref{eq:MainReReSolAlpha2}).}

\section*{Appendix C: Proof of Theorem \ref{the:OTReDual}}
Note that problem (\ref{eq:OTReRe}) is a minimization problem with the objective function $-\sum_{x\in\mathcal{X}} \sum_{y\in \mathcal{Y}} \pi_{xy}^{(t)}\gamma_{xy} - \sum_{y\in\mathcal{Y}  }  \sum_{x\in \mathcal{X}}\pi_{xy}^{(s)}\delta_{xy}$. We consider problem (\ref{eq:OTReRe}) as maximizing over $\sum_{x\in\mathcal{X}} \sum_{y\in \mathcal{Y}} \pi_{xy}^{(t)}\gamma_{xy} + \sum_{y\in\mathcal{Y}  }  \sum_{x\in \mathcal{X}}\pi_{xy}^{(s)}\delta_{xy}$ in this proof. Let $u_x$, $v_y$, $w_{xy}^{(t)}$, $w_{xy}^{(s)}$, $\epsilon_{xy}^{(t)}$, and $\epsilon_{xy}^{(s)}$ denote the Lagrange multipliers with respect to $\sum_{y\in\mathcal{Y}} \pi_{xy}^{(t)}= p_x$, $\sum_{x \in\mathcal{X}} \pi_{xy}^{(s)} = q_y$, $\pi_{xy}^{(t)} = \pi_{xy}$, $\pi_{xy} = \pi_{xy}^{(s)}$, $\pi_{xy}^{(t)}\geq 0$, and $\pi_{xy}^{(s)}\geq 0$, respectively, we can achieve the dual problem of problem  (\ref{eq:OTReRe}) as \cite{dantzig1998linear}
\begin{equation}
\label{eq:OTReDualOld}
\begin{array}{c}
\min\limits_{\{{\mathbf{u}_\mathcal{X}},{\mathbf{v}_{\mathcal{Y}}},{\mathbf{w}_{\mathcal{G}}^{(t)}},{\mathbf{w}_{\mathcal{G}}^{(s)}},{\epsilon_{\mathcal{G}}^{(t)}},{\epsilon_{\mathcal{G}}^{(s)}}\}}  \sum\limits_{x\in\mathcal{X}}u_x p_x  + \sum\limits_{y\in\mathcal{Y}} v_y q_y\\
\begin{array}{cc}
{\begin{array}{c}
	\text{s.t.}\\ \ 
	\end{array}}&{ \begin{array}{cc}
	{ \gamma_{xy} - u_x - w_{xy}^{(t)} + \epsilon_{xy}^{(t)}  =  0,}&{\forall \{x,y\}\in\mathcal{G};}\\
	{\delta_{xy}  - v_y  + w_{xy}^{(s)} + \epsilon_{xy}^{(s)} = 0,}&{\forall \{x,y\}\in\mathcal{G};} \\ {w_{xy}^{(t)} = w_{xy}^{(s)} ,}&{\forall \{x,y\}\in\mathcal{G};} \\ {\epsilon_{xy}^{(t)} \geq 0, \epsilon_{xy}^{(s)} \geq 0,}&{\forall \{x,y\}\in\mathcal{G};}
	\end{array}}
\end{array}
\end{array}
\end{equation}
By introducing $\epsilon_{xy}^{(t)} \geq 0$ and $\epsilon_{xy}^{(s)} \geq 0$ into $\gamma_{xy} - u_x - w_{xy}^{(t)} + \epsilon_{xy}^{(t)}  =  0$ and $\delta_{xy}  - v_y  + w_{xy}^{(s)} + \epsilon_{xy}^{(s)} = 0$, respectively, we can achieve problem (\ref{eq:OTReDual}).

\section*{Appendix D: Proof of Proposition \ref{pro:OTRe2}}
We follow similar steps in the proofs of Proposition \ref{pro:MainReRe} and Proposition \ref{pro:MainReRe2}. Let $\mathbf{x} = [{\mathbf{u}_\mathcal{X}}^T,{\mathbf{v}_\mathcal{Y}}^T,vec({\mathbf{w}_\mathcal{G}^{(t)}})^T,vec({\mathbf{w}_\mathcal{G}})^T]^T$ and $\mathbf{y} = [vec({\mathbf{w}_\mathcal{G}})^T,vec({\mathbf{w}_\mathcal{G}^{(s)}})^T]^T$. Thus, the consensus constraints can be captured as $\mathbf{Ax} = \mathbf{y}$, where $A=[\mathbf{0}_{NM,N+M},\mathbf{I}_{NM,NM},\mathbf{0}_{NM,NM};\mathbf{0}_{NM,N+M},\mathbf{0}_{NM,NM},\mathbf{I}_{NM,NM}]$. Note that $N$ and $M$ are the numbers of targets and sources, respectively. As a result, we can achieve the following iterations to solve (\ref{eq:OTReDualCon}). 
\begin{equation}
\label{eq:OTReDualCon2SolUW}
\begin{array}{l}
\left\{u_x(k+1),{\mathbf{w}_x^{(t)}}(k+1)\right\}   \in \arg\min\limits_{\left\{u_x,{\mathbf{w}_x^{(t)}}\right\} \in \mathcal{U}_{x}}   u_xp_x \\ \ \ \ \ \ \ \ \ \ \ + \sum\limits_{y\in\mathcal{Y}} \beta_{xy1}(k)w_{xy}^{(t)}   + \frac{\widehat{\eta}}{2} \sum\limits_{y\in\mathcal{Y}}  (w_{xy}^{(t)} - w_{xy}(k))^2 ,
\end{array}
\end{equation}
\begin{equation}
\label{eq:OTReDualCon2SolVW}
\begin{array}{l}
\left\{v_y(k+1),{\mathbf{w}_y^{(s)}}(k+1) \right\}  \in \arg\min\limits_{\left\{v_y,{\mathbf{w}_y^{(s)}} \right\}  \in \mathcal{U}_{y}}  v_yq_y \\ \ \ \ \ \ \ \ \ \ \ - \sum\limits_{x\in\mathcal{X}}  \beta_{xy2}(k)w_{xy}^{(s)}  + \frac{\widehat{\eta}}{2} \sum\limits_{x\in\mathcal{X}}  (w_{xy}(k)-w_{xy}^{(s)} )^2 ,
\end{array}
\end{equation}
\begin{equation}
\label{eq:OTReDualCon2SolBeta}
\begin{array}{l}
w_{xy}(k+1) \in \arg\min\limits_{w_{xy}} -\beta_{xy1}(k) w_{xy} + \beta_{xy2}(k)w_{xy} \\  \ \ \ \ \ \ \ \  + \frac{\widehat{\eta}}{2}(w_{xy}^{(t)}(k+1)-w_{xy})^2 +\frac{\widehat{\eta}}{2}(w_{xy}-w_{xy}^{(s)}(k+1))^2 ,
\end{array}
\end{equation}
\begin{equation}
\label{eq:OTReDualCon2SolAlpha1}
\beta_{xy1}(k+1) = \beta_{xy1}(k) + \widehat{\eta} (w_{xy}^{(t)}(k+1) - w_{xy}(k+1)),
\end{equation}
\begin{equation}
\label{eq:OTReDualCon2SolAlpha2}
\beta_{xy2}(k+1) = \beta_{xy2}(k) + \widehat{\eta} (w_{xy}(k+1) -w_{xy}^{(s)}(k+1)),
\end{equation}
where $\mathcal{U}_x:=\{ u_x,{\mathbf{w}_x^{(t)}} |  \gamma_{xy} - u_x - w_{xy}^{(t)} \leq 0, y\in\mathcal{Y}  \}$, $\mathcal{U}_y:=\{v_y,{\mathbf{w}_y^{(s)}} | \delta_{xy} - v_y + w_{xy}^{(s)} \leq 0, x\in\mathcal{X}  \}$. Note that (\ref{eq:OTReDualCon2SolBeta}) can be solved directly as $w_{xy}(k+1) = \frac{1}{2\widehat{\eta}}(\beta_{xy1}(k) - \beta_{xy2}(k))  + \frac{1}{2}(w_{xy}^{(t)}(k+1)+w_{xy}^{(s)}(k+1))$. As a result, iterations (\ref{eq:OTReDualCon2SolAlpha1}) and (\ref{eq:OTReDualCon2SolAlpha2}) can be written as $
\beta_{xy1}(k+1) = \frac{1}{2}(\beta_{xy1}(k)+\beta_{xy2}(k))+\frac{\widehat{\eta}}{2}(w_{xy}^{(t)}(k+1)-w_{xy}^{(s)}(k+1))$, and $
\beta_{xy2}(k+1) = \frac{1}{2}(\beta_{xy1}(k)+\beta_{xy2}(k))+\frac{\widehat{\eta}}{2}(w_{xy}^{(t)}(k+1)-w_{xy}^{(s)}(k+1))$, respectively. Thus, we have that $\beta_{xy1}(k) = \beta_{xy2}(k)$ holds for $k> 0$. As a result, after writing $\beta_{xy1}(k)$ and $\beta_{xy2}(k)$  as $\beta_{xy}(k)$, we have  (\ref{eq:OTReDualCon2Sol2Beta}) and  (\ref{eq:OTReDualCon2Sol2Alpha}). 

\section*{Appendix E: Proof of Proposition \ref{pro:OTReDualSolDual}}
We first derive the dual problem of (\ref{eq:OTReDualCon2Sol2UW}). Note that the Lagrange function of (\ref{eq:OTReDualCon2Sol2UW}) is given by:
\begin{equation}
\label{eq:xUWLagrange}
\begin{array}{l}
L_{x}( u_x,\mathbf{w}_x^{(t)},{\Lambda_x^{(t)}}) =   u_xp_x + \sum\limits_{y\in\mathcal{Y}} \beta_{xy}(k)w_{xy}^{(t)} \\ \ \   + \frac{\widehat{\eta}}{2} \sum\limits_{y\in\mathcal{Y}} \left(w_{xy}^{(t)} - w_{xy}(k) \right)^2 + \sum\limits_{y\in\mathcal{Y}} \lambda_{xy}^{(t)} \left( \gamma_{xy} - u_x - w_{xy}^{(t)}  \right),
\end{array}
\end{equation}
where $\lambda_{xy}^{(t)} \geq 0$ are the Lagrange multipliers. {By KKT conditions from Section 5.5.3 in \cite{boyd2004convex}, we have
$\frac{\partial L_x }{\partial u_x} = p_x - \sum\limits_{y\in\mathcal{Y}} \lambda_{xy}^{(t)} = 0$; $\frac{\partial L_x }{\partial w_{xy}^{(t)}} =\beta_{xy}(k) + \widehat{\eta} w_{xy}^{(t)} - \widehat{\eta} w_{xy}(k) - \lambda_{xy}^{(t)} = 0$.} Thus, we can achieve $ \sum\limits_{y\in\mathcal{Y}} \lambda_{xy}^{(t)} = p_x$, $   w_{xy}^{(t)}  = w_{xy}(k) + \frac{1}{\widehat{\eta}} (\lambda_{xy}^{(t)}  - \beta_{xy}(k))$, and $\lambda_{xy}^{(t)}\geq 0$. By plugging these equalities into (\ref{eq:xUWLagrange}), we have $\sum\limits_{y\in\mathcal{Y}} \lambda_{xy}^{(t)} \gamma_{xy} - \sum\limits_{y\in\mathcal{Y}} w_{xy}(k)  \left( \lambda_{xy}^{(t)} - \beta_{xy}(k) \right)  - \frac{1}{2\widehat{\eta}} \sum\limits_{y\in\mathcal{Y}} \left( \lambda_{xy}^{(t)}  - \beta_{xy}(k)\right)^2$. As a result, the dual problem of (\ref{eq:OTReDualCon2Sol2UW}) can be written as (\ref{eq:OTReDualSolDualUW}). Note that the terms $w_{xy}(k)\beta_{xy}(k)$ have been ignored as they are constant for $\lambda_{xy}^{(t)}$. After solving (\ref{eq:OTReDualSolDualUW}), we can find $w_{xy}^{(t)}(k+1)$ with
\begin{equation}
\label{eq:xDualw}
w_{xy}^{(t)}(k+1)   =  w_{xy}(k) + \frac{1}{\widehat{\eta}} (\lambda_{xy}^{(t)}(k+1)  - \beta_{xy}(k)).
\end{equation}

Following a similar method, we can prove that the dual problem of (\ref{eq:OTReDualCon2Sol2VW}) is (\ref{eq:OTReDualSolDualVW}). After solving (\ref{eq:OTReDualSolDualVW}), we can find $w_{xy}^{(s)}(k+1)$ with:
\begin{equation}
\label{eq:yDualw}
w_{xy}^{(s)}(k+1)   = w_{xy}(k) + \frac{1}{\widehat{\eta}} (\beta_{xy}(k)-\lambda_{xy}^{(s)}(k+1) ).
\end{equation}

By plugging (\ref{eq:xDualw}) and (\ref{eq:yDualw}) into (\ref{eq:OTReDualCon2Sol2Beta}) and (\ref{eq:OTReDualCon2Sol2Alpha}), we can achieve (\ref{eq:OTReDualSolDualAlpha}) and (\ref{eq:OTReDualSolDualBeta}), respectively.

\bibliographystyle{ieeetr}
\bibliography{DraftRZ.bib}

\begin{thebibliography}{10}

\bibitem{purohit2013emergency}
H.~Purohit, C.~Castillo, F.~Diaz, A.~Sheth, and P.~Meier, ``Emergency-relief
  coordination on social media: Automatically matching resource requests and
  offers,'' {\em First Monday}, vol.~19, no.~1, 2013.

\bibitem{fiedrich2000optimized}
F.~Fiedrich, F.~Gehbauer, and U.~Rickers, ``Optimized resource allocation for
  emergency response after earthquake disasters,'' {\em Safety science},
  vol.~35, no.~1-3, pp.~41--57, 2000.

\bibitem{willis2007guiding}
H.~H. Willis, ``Guiding resource allocations based on terrorism risk,'' {\em
  Risk analysis}, vol.~27, no.~3, pp.~597--606, 2007.

\bibitem{wijaya2013matching}
T.~K. Wijaya, K.~Larson, and K.~Aberer, ``Matching demand with supply in the
  smart grid using agent-based multiunit auction,'' in {\em Communication
  Systems and Networks (COMSNETS), 2013 Fifth International Conference on},
  pp.~1--6, IEEE, 2013.

\bibitem{samadi2010optimal}
P.~Samadi, A.-H. Mohsenian-Rad, R.~Schober, V.~W. Wong, and J.~Jatskevich,
  ``Optimal real-time pricing algorithm based on utility maximization for smart
  grid,'' in {\em Smart Grid Communications (SmartGridComm), 2010 First IEEE
  International Conference on}, pp.~415--420, IEEE, 2010.

\bibitem{georgiadis2006resource}
L.~Georgiadis, M.~J. Neely, L.~Tassiulas, {\em et~al.}, ``Resource allocation
  and cross-layer control in wireless networks,'' {\em Foundations and
  Trends{\textregistered} in Networking}, vol.~1, no.~1, pp.~1--144, 2006.

\bibitem{steenstra2014dynamic}
J.~Steenstra, A.~Gantman, K.~S. Taylor, and L.~Chen, ``Dynamic resource
  matching system,'' Apr.~1 2014.
\newblock US Patent 8,688,148.

\bibitem{gu2015matching}
Y.~Gu, W.~Saad, M.~Bennis, M.~Debbah, and Z.~Han, ``Matching theory for future
  wireless networks: fundamentals and applications,'' {\em IEEE Communications
  Magazine}, vol.~53, no.~5, pp.~52--59, 2015.

\bibitem{ferris2013methods}
J.~M. Ferris and G.~E. Riveros, ``Methods and systems for matching resource
  requests with cloud computing environments,'' Mar.~19 2013.
\newblock US Patent 8,402,139.

\bibitem{ding2014qos}
S.~Ding, C.~Xia, Q.~Cai, K.~Zhou, and S.~Yang, ``Qos-aware resource matching
  and recommendation for cloud computing systems,'' {\em Applied Mathematics
  and Computation}, vol.~247, pp.~941--950, 2014.

\bibitem{galichon2015optimal}
A.~Galichon, ``Optimal transport methods in economics,'' {\em Available at
  SSRN}, 2015.

\bibitem{geng2013new}
Y.~Geng and C.~G. Cassandras, ``New “smart parking” system based on
  resource allocation and reservations,'' {\em IEEE Transactions on Intelligent
  Transportation Systems}, vol.~14, no.~3, pp.~1129--1139, 2013.

\bibitem{villani2008optimal}
C.~Villani, {\em Optimal transport: old and new}, vol.~338.
\newblock Springer Science \& Business Media, 2008.

\bibitem{gao2009distributed}
J.~Gao, L.~Guibas, N.~Milosavljevic, and D.~Zhou, ``Distributed resource
  management and matching in sensor networks,'' in {\em Information Processing
  in Sensor Networks, 2009. IPSN 2009. International Conference on},
  pp.~97--108, IEEE, 2009.

\bibitem{hasan2015distributed}
M.~Hasan and E.~Hossain, ``Distributed resource allocation for relay-aided
  device-to-device communication under channel uncertainties: A stable matching
  approach,'' {\em IEEE Transactions on Communications}, vol.~63, no.~10,
  pp.~3882--3897, 2015.

\bibitem{raman1998matchmaking}
R.~Raman, M.~Livny, and M.~Solomon, ``Matchmaking: Distributed resource
  management for high throughput computing,'' in {\em High Performance
  Distributed Computing, 1998. Proceedings. The Seventh International Symposium
  on}, pp.~140--146, IEEE, 1998.

\bibitem{boyd2011distributed}
S.~Boyd, N.~Parikh, E.~Chu, B.~Peleato, and J.~Eckstein, ``Distributed
  optimization and statistical learning via the alternating direction method of
  multipliers,'' {\em Foundations and Trends{\textregistered} in Machine
  Learning}, vol.~3, no.~1, pp.~1--122, 2011.

\bibitem{jain1984quantitative}
R.~Jain, D.-M. Chiu, and W.~R. Hawe, {\em A quantitative measure of fairness
  and discrimination for resource allocation in shared computer system},
  vol.~38.
\newblock Eastern Research Laboratory, Digital Equipment Corporation Hudson,
  MA, 1984.

\bibitem{ross1999needs}
K.~N. Ross and R.~Levacic, {\em Needs-Based Resource Allocation in Education
  via Formula Funding of Schools.}
\newblock ERIC, 1999.

\bibitem{abrahamson1979patterns}
W.~G. Abrahamson, ``Patterns of resource allocation in wildflower populations
  of fields and woods,'' {\em American Journal of Botany}, pp.~71--79, 1979.

\bibitem{bouis1990effects}
H.~E. Bouis and L.~J. Haddad, {\em Effects of agricultural commercialization on
  land tenure, household resource allocation, and nutrition in the
  Philippines}, vol.~79.
\newblock Intl Food Policy Res Inst, 1990.

\bibitem{mainland2005decentralized}
G.~Mainland, D.~C. Parkes, and M.~Welsh, ``Decentralized, adaptive resource
  allocation for sensor networks,'' in {\em Proceedings of the 2nd conference
  on Symposium on Networked Systems Design \& Implementation-Volume 2},
  pp.~315--328, USENIX Association, 2005.

\bibitem{farbod2007resource}
A.~Farbod and T.~D. Todd, ``Resource allocation and outage control for
  solar-powered wlan mesh networks,'' {\em IEEE Transactions on Mobile
  Computing}, vol.~6, no.~8, pp.~960--970, 2007.

\bibitem{lepak1999human}
D.~P. Lepak and S.~A. Snell, ``The human resource architecture: Toward a theory
  of human capital allocation and development,'' {\em Academy of management
  review}, vol.~24, no.~1, pp.~31--48, 1999.

\bibitem{clearwater1996market}
S.~H. Clearwater, {\em Market-based control: A paradigm for distributed
  resource allocation}.
\newblock World Scientific, 1996.

\bibitem{devanur2011near}
N.~R. Devanur, K.~Jain, B.~Sivan, and C.~A. Wilkens, ``Near optimal online
  algorithms and fast approximation algorithms for resource allocation
  problems,'' in {\em Proceedings of the 12th ACM conference on Electronic
  commerce}, pp.~29--38, ACM, 2011.

\bibitem{huang2011fast}
B.~Huang and T.~Jebara, ``Fast b-matching via sufficient selection belief
  propagation,'' in {\em Proceedings of the Fourteenth International Conference
  on Artificial Intelligence and Statistics}, pp.~361--369, 2011.

\bibitem{madan2010fast}
R.~Madan, S.~P. Boyd, and S.~Lall, ``Fast algorithms for resource allocation in
  wireless cellular networks,'' {\em IEEE/ACM Transactions on Networking
  (TON)}, vol.~18, no.~3, pp.~973--984, 2010.

\bibitem{bonneel2011displacement}
N.~Bonneel, M.~Van De~Panne, S.~Paris, and W.~Heidrich, ``Displacement
  interpolation using lagrangian mass transport,'' in {\em ACM Transactions on
  Graphics (TOG)}, vol.~30, p.~158, ACM, 2011.

\bibitem{peyre2017computational}
G.~Peyr{\'e}, M.~Cuturi, {\em et~al.}, ``Computational optimal transport,''
  tech. rep., 2017.

\bibitem{cuturi2013sinkhorn}
M.~Cuturi, ``Sinkhorn distances: Lightspeed computation of optimal transport,''
  in {\em Advances in neural information processing systems}, pp.~2292--2300,
  2013.

\bibitem{haker2004optimal}
S.~Haker, L.~Zhu, A.~Tannenbaum, and S.~Angenent, ``Optimal mass transport for
  registration and warping,'' {\em International Journal of computer vision},
  vol.~60, no.~3, pp.~225--240, 2004.

\bibitem{glimm2010rigorous}
T.~Glimm, ``A rigorous analysis using optimal transport theory for a
  two-reflector design problem with a point source,'' {\em Inverse Problems},
  vol.~26, no.~4, p.~045001, 2010.

\bibitem{buyya2002economic}
R.~Buyya, {\em Economic-based Distributed Resource Management and Scheduling
  for Grid Computing}.
\newblock PhD thesis, Monash University, Melbourne, Australia, 2002.

\bibitem{wei2012distributed}
E.~Wei and A.~Ozdaglar, ``Distributed alternating direction method of
  multipliers,'' in {\em Decision and Control (CDC), 2012 IEEE 51st Annual
  Conference on}, pp.~5445--5450, IEEE, 2012.

\bibitem{forero2010consensus}
P.~A. Forero, A.~Cano, and G.~B. Giannakis, ``Consensus-based distributed
  support vector machines,'' {\em Journal of Machine Learning Research},
  vol.~11, no.~May, pp.~1663--1707, 2010.

\bibitem{chang2015multi}
T.-H. Chang, M.~Hong, and X.~Wang, ``Multi-agent distributed optimization via
  inexact consensus admm,'' {\em IEEE Transactions on Signal Processing},
  vol.~63, no.~2, pp.~482--497, 2015.

\bibitem{huang2016consensus}
K.~Huang and N.~D. Sidiropoulos, ``Consensus-admm for general quadratically
  constrained quadratic programming,'' {\em IEEE Transactions on Signal
  Processing}, vol.~64, no.~20, pp.~5297--5310, 2016.

\bibitem{zhang2018game}
R.~Zhang and Q.~Zhu, ``A game-theoretic approach to design secure and resilient
  distributed support vector machines,'' {\em IEEE transactions on neural
  networks and learning systems}, no.~99, pp.~1--16, 2018.

\bibitem{zennaro2011fast}
D.~Zennaro, E.~Dall'Anese, T.~Erseghe, and L.~Vangelista, ``Fast clock
  synchronization in wireless sensor networks via admm-based consensus,'' in
  {\em Modeling and Optimization in Mobile, Ad Hoc and Wireless Networks
  (WiOpt), 2011 International Symposium on}, pp.~148--153, IEEE, 2011.

\bibitem{shen2012distributed}
C.~Shen, T.-H. Chang, K.-Y. Wang, Z.~Qiu, and C.-Y. Chi, ``Distributed robust
  multicell coordinated beamforming with imperfect csi: An admm approach,''
  {\em IEEE Transactions on signal processing}, vol.~60, no.~6, pp.~2988--3003,
  2012.

\bibitem{papadakis2014optimal}
N.~Papadakis, G.~Peyr{\'e}, and E.~Oudet, ``Optimal transport with proximal
  splitting,'' {\em SIAM Journal on Imaging Sciences}, vol.~7, no.~1,
  pp.~212--238, 2014.

\bibitem{benamou2015augmented}
J.-D. Benamou and G.~Carlier, ``Augmented lagrangian methods for transport
  optimization, mean field games and degenerate elliptic equations,'' {\em
  Journal of Optimization Theory and Applications}, vol.~167, no.~1, pp.~1--26,
  2015.

\bibitem{geissler2015solving}
B.~Geissler, A.~Morsi, L.~Schewe, and M.~Schmidt, ``Solving power-constrained
  gas transportation problems using an mip-based alternating direction
  method,'' {\em Computers \& Chemical Engineering}, vol.~82, pp.~303--317,
  2015.

\bibitem{boyd2004convex}
S.~Boyd and L.~Vandenberghe, {\em Convex optimization}.
\newblock Cambridge university press, 2004.

\bibitem{sion1958general}
M.~Sion {\em et~al.}, ``On general minimax theorems,'' {\em Pacific J. Math},
  vol.~8, no.~1, pp.~171--176, 1958.

\bibitem{dantzig1998linear}
G.~B. Dantzig, {\em Linear programming and extensions}.
\newblock Princeton university press, 1998.

\end{thebibliography}

\end{document}